\documentclass[12pt,twoside,a4paper]{amsart}

\usepackage[margin=2.5cm]{geometry}

\usepackage{amssymb,mathtools}
\usepackage{xcolor}

\newtheorem{thm}{Theorem}[section]
\newtheorem*{thmA}{Theorem A}
\newtheorem*{thmB}{Theorem B}
\newtheorem*{thmC}{Theorem C}
\newtheorem*{thmD}{Theorem D}
\newtheorem*{thmE}{Theorem E}

\newtheorem{lem}[thm]{Lemma}
\newtheorem{prop}[thm]{Proposition}
\newtheorem{cor}[thm]{Corollary}
\theoremstyle{definition}
\newtheorem{defn}[thm]{Definition}
\theoremstyle{remark}
\newtheorem{rem}[thm]{Remark}
\newtheorem{example}[thm]{Example}

\numberwithin{equation}{section}
\allowdisplaybreaks

\newcommand{\Hn}{\mathbb{H}^n}

\newcommand{\C}{\mathbb{C}}
\newcommand{\R}{\mathbb{R}}
\newcommand{\Z}{\mathbb{Z}}
\newcommand{\N}{\mathbb{N}}

\newcommand{\Rplus}{\mathbb{R}^+}

\renewcommand{\Im}{\operatorname{Im}}

\newcommand{\Lie}[1]{\mathfrak{#1}}

\newcommand{\trace}{\operatorname{trace}}
\newcommand{\ad}{\operatorname{ad}}
\newcommand{\supp}{\operatorname{supp}}
\newcommand{\loc}{\mathrm{loc}}

\newcommand{\afterbar}{^-}
\newcommand{\afterarrow}{^{\to}}
\newcommand{\aftertilde}{\tilde{\phantom{n}}}
\newcommand{\sfrac}[2]{#1 / #2}
\newcommand{\wrt}[1]{\,\mathrm{d} #1}

\newcommand{\BL}{\operatorname{\mathbf{BL}}}
\newcommand{\bsf}{\boldsymbol{f}}
\newcommand{\bss}{\boldsymbol{\sigma}}
\newcommand{\bsds}{\boldsymbol{\dot\sigma}}
\newcommand{\bsDs}{\boldsymbol{\mathrm{d}\sigma}}
\newcommand{\bsts}{\boldsymbol{\tilde\sigma}}
\newcommand{\bsrho}{\boldsymbol{\rho}}
\newcommand{\bsphi}{\boldsymbol{\phi}}
\newcommand{\bspsi}{\boldsymbol{\psi}}
\newcommand{\bspi}{\boldsymbol{\pi}}

\newcommand{\Polytope}{\boldsymbol{P}}
\newcommand{\bsp}{\boldsymbol{p}}
\newcommand{\bstp}{\boldsymbol{\tilde{p}}}
\newcommand{\bsq}{\boldsymbol{q}}
\newcommand{\bsg}{\boldsymbol{\Lie{g}}}
\newcommand{\bsG}{\boldsymbol{G}}
\newcommand{\bsGc}{\boldsymbol{G^\circ}}
\newcommand{\bsH}{\boldsymbol{H}}
\newcommand{\bsU}{\boldsymbol{U}}
\newcommand{\bsV}{\boldsymbol{V}}

\newcommand{\norm}[1]{\left\Vert #1 \right\Vert}

\newcommand{\lpar}{\left( }
\newcommand{\rpar}{\right) }
\newcommand{\bigglpar}{\biggl(}
\newcommand{\biggrpar}{\biggr)}

\newcommand{\labs}{\left\vert}
\newcommand{\rabs}{\right\vert}
\newcommand{\biglabs}{\bigl\vert}
\newcommand{\bigrabs}{\bigr\vert}
\newcommand{\Biglabs}{\Bigl\vert}
\newcommand{\Bigrabs}{\Bigr\vert}
\newcommand{\bigglabs}{\biggl\vert}
\newcommand{\biggrabs}{\biggr\vert}

\begin{document}

\keywords {Brascamp--Lieb inequality, locally compact groups}
\subjclass[2020]{44A12, 52A40}
\thanks{
The authors thank Jonathan Bennett for useful discussions about Brascamp--Lieb inequalities. \\  
The first and second authors were supported by the Australian Research Council through grants DP220100285 and DP260101083. 
The third author was supported in part by NSTC through grant 111-2115-M-002-010-MY5 and in part by a Macquarie University co-tutelle PhD scholarship.}

\title[Brascamp--Lieb inequalities]{The Brascamp--Lieb inequality on compact Lie groups \\ and its extinction on homogeneous Lie groups}
\author{Michael G. Cowling} 
\address{School of Mathematics and Statistics\\ University of New South Wales\\ Sydney NSW 2052\\ Australia}
\email{m.cowling@unsw.edu.au}
\author{Ji Li} 
\address{School of Mathematical and Physical Sciences\\ Macquarie University\\ NSW 2109\\ Australia.}
\email{ji.li@mq.edu.au}

\author{Chong-Wei Liang}
\address{Department of Mathematics, National Taiwan University, Taiwan.}
\email{d10221001@ntu.edu.tw}

\date{}

\begin{abstract}
We study the Brascamp--Lieb inequalities on locally compact nonabelian groups and the Brascamp--Lieb constants $\mathbf{BL}(G, \boldsymbol{\sigma}, \boldsymbol{p})$ associated to a Brascamp--Lieb datum: locally compact groups $G$ and $G_j$, a family of homomorphisms $\sigma_j\colon  G \to G_j$ and Lebesgue indices $p_j$.
We focus on homogeneous Lie groups and compact Lie groups.
For homogeneous Lie groups $G$, we show that the constant $\mathbf{BL}(G, \boldsymbol{\sigma}, \boldsymbol{p})$  is equal to the constant $\mathbf{BL}(\mathfrak{g}, \boldsymbol{\mathrm{d}\sigma}, \boldsymbol{p})$, where $\mathfrak{g}$ is the Lie algebra of $G$ and $\mathrm{d}\sigma_j$ is the differential of $\sigma_j$.
For Heisenberg-like groups $G$, we show that the only inequalities that can occur are multilinear Hölder inequalities.
For compact Lie groups, we find necessary and sufficient conditions for finiteness of the constant $\mathbf{BL}(G, \boldsymbol{\sigma}, \boldsymbol{p})$ in terms of $\boldsymbol{\sigma}$ and $\boldsymbol{p}$ and find an explicit expression for the constant, similar to those found by Bennett and Jeong in the abelian case.
\end{abstract}

\maketitle

\section{Introduction}
The Brascamp--Lieb inequalities (also known as Hölder--Brascamp--Lieb inequalities), introduced in \cite{BL76}, are of considerable interest across mathematics; see, for example, \cite{BB21, BCCT08, Zh22}  for a discussion of their influence, particularly in Fourier analysis. 
They provide Lebesgue space bounds for rather broad classes of multilinear forms that arise in a range of settings, from harmonic analysis and dispersive PDE, to additive combinatorics, convex geometry and theoretical computer science (see \cite{CDKSY24}). 
Originally they were phrased in the context of euclidean spaces, but more recent versions consider discrete abelian groups and finite-dimensional compact abelian groups, as well as compact homogeneous spaces and the Heisenberg group. 
For more background on these inequalities, see below.

The main aim of this paper is to describe the Brascamp--Lieb inequalities on more general locally compact groups, by which we mean locally compact Hausdorff topological groups, with a particular focus on nilpotent and compact Lie groups. 
Here duality is a less useful tool, as the dual of a nonabelian group is not a group.
Our message is that the absence of commutativity means that there are many fewer potential inequalities.

We now define Brascamp--Lieb inequalities, outline some of their history, and present our main results.

\begin{defn}
A \emph{Brascamp--Lieb datum} is a triple $(G, \bss,\bsp)$, where $G$ is a locally compact group, $\bss$ is a $J$-tuple of homomorphisms $\sigma_j\colon G\to  G_j$ of locally compact groups, and $\bsp$ is a $J$-tuple of indices $(p_1,\dots ,p_J)$, each in $[1,\infty]$. 
A Brascamp--Lieb input $\bsf$ is a $J$-tuple of functions $f_j$ in $L^{p_j}(G_j)$.
\end{defn}

We \emph{always} suppose that homomorphisms are continuous.
The number of homomorphisms, that is, $J$, is not particularly important, and we often omit to mention it.
We write $\bsG$ for the product group $G_1 \times \dots \times G_J$; we may consider $\bss$ as the homomorphism of $G$ into $\bsG$ whose components are the $\sigma_j$. 

\begin{defn}
A \emph{Brascamp--Lieb inequality} is an inequality of the form
\begin{equation}\label{eq:usual}
\bigglabs \int_{G} \prod_j   (f_j \circ \sigma_j)(x) \wrt x \biggrabs  
\leq C \prod_{j} \left\Vert f_j \right\Vert_{L^{p_j}(G_j)}
\qquad\forall f_j \in C_c(G_j) ,
\end{equation}
where $C$ is a constant.
The product function on the left hand side of this inequality is assumed to be integrable.
The smallest possible value of $C$ in \eqref{eq:usual} is called the \textit{Brascamp--Lieb constant} and denoted by $\BL(G, \bss, \bsp)$; we write $\BL(G,\bss,\bsp) = \infty$ if no such inequality holds. 
\end{defn}

For finite groups $G$ and $\bsG$, the Brascamp--Lieb constant is finite.

The Brascamp--Lieb constant $\BL(G, \bss, \bsp)$ clearly depends on the choices of the Haar measures on $G$ and the $G_j$ that implicitly appear in \eqref{eq:usual}.
However, finiteness of the constants does not, since changing the Haar measure by a factor changes the left hand side of \eqref{eq:usual}  by the same factor and the right hand side  by a power of the factor.

We now present a very brief history of the Brascamp--Lieb inequalities, which  generalise classical inequalities such as Hölder's inequality, Young's convolution inequality and the Loomis--Whitney inequalities.
The modern story begins with Brascamp and Lieb \cite{BL76}, who studied generalisations of Young's inequality on \emph{euclidean spaces}.
General finiteness results when $G$ and $\bsG$ are euclidean spaces were found independently by Barthe \cite{Barthe98} and Carlen, Lieb and Loss \cite{CLL04}, who treated the case where each $G_j$ is a copy of $\R$.
Then Bennett, Carbery, Christ and Tao \cite{BCCT08} found necessary and sufficient conditions for the finiteness of $\BL(G,\bss,\bsp)$ when the $G_j$ are euclidean spaces of arbitrary finite dimension, which we now recall.
\begin{defn}\label{def:BCCT}
Let $(G, \bss,\bsp)$ be a Brascamp--Lieb datum, where $G$ and $\bsG$ are (real) vector spaces and $\bss$ is a linear mapping.
The \emph{BCCT conditions} are the scaling equality
\begin{align}\label{eq:BCCT-1}
\dim(G) &=\sum^J_{j=1}\frac{1}{p_j} \dim(G_j)           \\
\noalign{\noindent and the dimension inequalities}
\label{eq:BCCT-2}
\dim(H)  &\leq \sum^J_{j=1}\frac{1}{p_j} \dim(\sigma_{j}(H))
\qquad \forall H \in \Lie{S} (G) ,       
\end{align}
where $\Lie{S}(G)$ is the collection of all vector subspaces of $G$.
\end{defn}
For euclidean spaces, it is known how to compute the constants when they are finite, at least in principle, thanks to a fundamental result of Lieb \cite{Lieb90}, which states that it suffices to test on centred gaussian functions $f_j$.

For \emph{finitely generated discrete abelian groups} $G$ and $\bsG$  it was shown by Bennett, Carbery, Christ and Tao \cite{BCCT10} that the finiteness of $\BL(G,\bss,\bsp)$ is equivalent to the rank condition, that is, \eqref{eq:BCCT-2} where dimension is replaced by rank and $H$ varies over all subgroups of $G$.
A discrete analogue of Lieb's theorem was later obtained by Christ \cite{Chr13}, allowing the constant to be computed by testing on indicator functions of finite subgroups of $G$.

Bennett and Jeong reached similar conclusions in \cite[Theorems 4.1 and 5.1]{BJ22} for \emph{compact abelian Lie groups}.
Here a necessary and sufficient criterion for finiteness of the Brascamp--Lieb constant is the family of codimension inequalities
\begin{equation}\label{eq:codimension-condition-group}
\sum_j \frac{1}{p_j} \lpar \dim(G_j)  - \dim( \sigma_j (H) ) \rpar  
\leq  \dim(G) - \dim(H)
\end{equation}
for all closed connected subgroups $H$ of $G$. 
Further, the constant may be evaluated by testing on indicator functions of open subgroups of $G$.

Very recently, Bennett and the first author \cite{BC25} established a structure theorem for the Brascamp--Lieb constant in the setting of \emph{locally compact abelian groups}, which extends and unifies the finiteness characterisations for abelian groups mentioned above. 
They placed particular emphasis on Fourier invariance throughout, reflecting the fundamental Fourier invariance of Brascamp–Lieb multilinear forms in their context.

We mention also the \emph{nonlinear version of the inequality}, in which group homomorphisms are replaced by smooth mappings  \cite{BBBCF20}.
At any given point $x$, the derivative $\mathrm{d}\sigma_j$ of the mapping $\sigma_j$ is a linear mapping of the tangent space $T_xG$ to $T_{\sigma_j(x)}G_j$.
It is shown that, if at some point $x$, the derivatives satisfy the condition
\[
\dim(H)  \leq \sum^J_{j=1}\frac{1}{p_j} \dim(\mathrm{d}\sigma_{j}(H))
\]
for all subspaces $H$ of the tangent space $T_xG$, then a local Brascamp--Lieb inequality holds, that is, an inequality \eqref{eq:usual} with the extra condition that the support of each $f_j$ lies in a sufficiently small neighbourhood of $\sigma_j(x)$.
This result has been used to prove particular cases of Brascamp--Lieb inequalities on Lie groups \cite{CMMP19}.
More precisely, on a unimodular locally compact group $G$, we have Young's inequality for convolution, and iterated versions thereof; here is an example.  

\begin{example}\label{ex:Young}
Let $H$ be a unimodular locally compact group,  $G$ be $H \times H \times H$, $\bsH$ be $H \times H \times H \times H$ and define $\sigma_j \colon  G \to H$ by 
\[
\begin{aligned}
\sigma_1(x_1, x_2,x_3) &= x_1 &&\qquad& \sigma_2(x_1, x_2,x_3) &= x_1^{-1}x_2  \\
\sigma_3(x_1, x_2,x_3) &= x_2^{-1}x_3 &&&  \sigma_4(x_1, x_2,x_3) &= x_3 .
\end{aligned}
\]
Then
\[
\begin{aligned}
\iiint_{\bsG} \prod_j f_j (\sigma_j (y)) \wrt y_1\wrt y_2\wrt y_3 
&= \iint  f_1 * f_2(y_2) f_3(y_2^{-1} y_3) f_4(y_3) \wrt y_2\wrt y_3  \\
&= \int f_1 * f_2 * f_3 (y_3) f_4(y_3) \wrt y_3,
\end{aligned}
\]
and the converse of Hölder's inequality shows that if a Brascamp--Lieb inequality holds for $(G, \bss, \bsp)$, then
\[
\norm{ f_1 * f_2 * f_3 }_{L^{p_4'}(H)} 
\leq C \norm{ f_1 }_{L^{p_1}(H)} \norm{ f_2  }_{L^{p_2}(H)} \norm{ f_3 }_{L^{p_3}(H)}  ,
\]
which is the trilinear form of Young's inequality for convolution.
It may be shown that this holds when $1/p_4' = 1/p_{1} + 1/p_{2} + 1/p_{3} -2$ by interpolating from the cases where two of $p_1$, $p_2$  and $p_3$  are $1$ and the third is $p_4'$.
\end{example}

In this paper, we discuss Brascamp--Lieb inequalities on not necessarily abelian locally compact groups, with a focus on nilpotent and compact Lie groups. 

In the context of locally compact groups, we show in Section \ref{sec:BL-general-groups} that $\BL(G, \bss, \bsp)$ is infinite unless the homomorphism $\bss\colon  G \to \bsG$ is proper, that is, $\bss^{-1}(U)$ is compact whenever $U$ is compact, and unless $\sigma_j$ is open or $p_j = \infty$ for all $j$.
Since our primary interest is in the case where the Brascamp--Lieb constant is finite, we assume that $\bss$ is proper and each $\sigma_j$ is open and surjective.
It then turns out that we may assume that $\ker(\bss)$ is trivial and that $\bss$ is a homeomorphic isomorphism of $G$ with a subgroup $\bss(G)$ of $\bsG$, equipped with the relative topology.
Equivalently, we may assume that our datum is \emph{canonical}, that is, $G$ is a closed subgroup of a product group $\bsG$ and the $\sigma_j$ are the restrictions of $G$ of the canonical projections of $\bsG$ onto the factors $G_j$.
This particular type of configuration was identified in \cite{BBBCF20}.

In Section 4, we consider Brascamp--Lieb constants on homogeneous Lie groups, that is, Lie groups that admit a one-parameter family of automorphic dilations; these are nilpotent (see \cite{Sie86}). 
Here we have two main results.
First, we show  that the Brascamp--Lieb inequality may be linearised.
\begin{thmA}
Let $(G, \bss, \bsG)$ be a canonical Brascamp--Lie datum of homogeneous Lie groups such that $\sigma_j \circ \delta_t = \delta_t \circ \sigma_j$ for all $j$.
Then the derivatives $\mathrm{d}\sigma_j$ of the homomorphisms are linear maps of the corresponding Lie algebras, and 
\[
\BL(G,\bss,\bsp) = \BL(\Lie{g}, \bsDs, \bsg).
\]
\end{thmA}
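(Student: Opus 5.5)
Both sides of the claimed identity are computed in exponential coordinates, and the two multilinear forms are then compared after a blow-up by the dilations. Since $G$ and each $G_j$ are homogeneous, hence simply connected nilpotent, the exponential maps $\exp\colon \Lie{g}\to G$ and $\exp\colon \Lie{g}_j\to G_j$ are global diffeomorphisms. Haar measure on each group is the push-forward of Lebesgue measure on its Lie algebra, and we normalise the measures so that this holds exactly. The relation $\sigma_j\circ\exp = \exp\circ \mathrm{d}\sigma_j$ then identifies $f_j\circ\sigma_j\circ\exp$ with $(f_j\circ\exp)\circ \mathrm{d}\sigma_j$. Consequently the group form $\int_G \prod_j f_j\circ\sigma_j$ equals the Lie algebra form $\int_{\Lie{g}} \prod_j F_j\circ \mathrm{d}\sigma_j$ with $F_j = f_j\circ\exp$, and $\Vert F_j\Vert_{L^{p_j}(\Lie{g}_j)} = \Vert f_j\Vert_{L^{p_j}(G_j)}$. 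The hypothesis $\sigma_j\circ\delta_t=\delta_t\circ\sigma_j$ ensures that $\mathrm{d}\sigma_j$ intertwines the dilations on the Lie algebras. This compatibility is not actually needed for the identity above, but it will matter for the limiting step below.

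At first sight this appears to prove the theorem outright, since $f_j\mapsto f_j\circ\exp$ is a bijection between $C_c(G_j)$ and $C_c(\Lie{g}_j)$. The catch is that the \emph{vector space} structure of $\Lie{g}$ plays no role in the BL constant as computed on the group side, whereas on the Lie algebra side $(\Lie{g},\bsDs,\bsp)$ is an ordinary linear datum. So the identity is genuinely tautological: $\int_{\Lie{g}}\prod_j F_j(\mathrm{d}\sigma_j X)\,dX$ is just the euclidean BL form. One must, however, check that $\mathrm{d}\sigma_j$ is linear, which it is as the differential of a homomorphism, and that $\exp$ carries Haar measure to Lebesgue measure. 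The latter follows because the Jacobian of $\exp$ in nilpotent groups is $\det\bigl((1-e^{-\ad X})/\ad X\bigr)=1$, as $\ad X$ is nilpotent.

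If instead one wishes to argue in a way that does not rely on the measure-preserving property (for instance, to handle normalisations), the robust route is a scaling-limit argument in two steps. For the inequality $\BL(\Lie{g})\le \BL(G)$, take $F_j\in C_c(\Lie{g}_j)$ and set $f_j = F_j\circ\log$. For the reverse, apply the same dictionary backwards. The main point to watch is that canonical data map to canonical data, so that $\bsDs$ is injective with closed image. This is immediate, since $\ker \bsDs$ is the Lie algebra of $\ker\bss=\{e\}$.

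The step I expect to require the most care is the Haar measure identification, namely that the push-forward of Lebesgue measure under $\exp$ is Haar measure on each of $G$ and $G_j$ simultaneously. Any discrepancy would be a constant factor, and the scaling equality then makes the constants agree regardless. To see this, one uses the dilations: $\delta_t$ scales Haar measure by $t^{Q}$ (homogeneous dimension) on both sides, and finiteness forces $Q(G)=\sum_j Q(G_j)/p_j$. Hence the ratio of normalisations cancels, and the equality of constants holds.
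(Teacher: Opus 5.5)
Your first two paragraphs already give a complete and correct proof, by a different and more direct route than the paper's.

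\textbf{The paper's route.} The paper argues in two steps. Theorem~\ref{thm:G-loc-g-loc} shows $\BL^{\loc}(G,\bss,\bsp)=\BL^{\loc}(\Lie{g},\bsDs,\bsp)$ for an arbitrary Lie group. It does this by transporting inputs supported near the identity through $\exp$, using only that the Radon--Nikodym derivatives of the exponential maps equal $1$ at $0$. Theorem~\ref{thm:G-loc-G} then uses the dilations and the scaling identity $Q=\sum_j Q_j/p_j$ to show that local and global constants coincide. This is applied on both $G$ and $\Lie{g}$, the latter with the homogeneous dilations rather than scalar multiplication.

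\textbf{Your route.} You use global facts about connected simply connected nilpotent groups. First, $\exp$ is a homeomorphism that carries Lebesgue measure to Haar measure, since $\det\bigl((1-e^{-\ad X})/\ad X\bigr)\equiv 1$. Second, $\sigma_j\circ\exp=\exp\circ\,\mathrm{d}\sigma_j$ holds everywhere, by naturality of $\exp$. So the substitution $F_j=f_j\circ\exp$ identifies the two multilinear forms and the $L^{p_j}$ norms exactly, for all inputs at once.

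\textbf{What each buys.} Your argument proves more than the stated theorem. Neither the dilation hypothesis, nor the scaling condition, nor canonicity is used. The equality therefore holds for any datum of connected simply connected nilpotent Lie groups, with Haar measures normalised as push-forwards of Lebesgue measure. This is exactly the paper's normalisation, since a Radon--Nikodym derivative equal to $1$ at $0$ is identically $1$ in the nilpotent case. The paper's route instead buys a local comparison that remains valid for Lie groups where $\exp$ is neither global nor measure preserving, with homogeneity used only to globalise.

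\textbf{What to discard.} The remainder of your proposal should be dropped rather than relied on.
\begin{enumerate}
\item The ``scaling-limit'' alternative is just the same dictionary again.
\item The dilation compatibility never becomes relevant, despite your remark that it will matter later.
\item The claim in your last paragraph is false. Replacing the Haar measures by $c\,\mathrm{d}x$ on $G$ and $c_j\,\mathrm{d}x_j$ on $G_j$ multiplies the constant by $c\prod_j c_j^{-1/p_j}$. The identity $Q=\sum_j Q_j/p_j$ makes this factor equal to $1$ only when $(c,c_j)=(t^{Q},t^{Q_j})$ for a common $t$, that is, only for the rescalings induced by a dilation. An arbitrary normalisation discrepancy is not absorbed.
\end{enumerate}
The equality of constants genuinely depends on the matched normalisations you fixed at the outset. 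Since you established those exactly, nothing further is needed.
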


We also show that noncommutativity is an obstruction to the existence of Brascamp--Lieb inequalities: indeed, for certain ``Heisenberg-like'' groups $G$, the only Brascamp--Lieb inequalities involving homomorphisms $\sigma_j \colon G \to G_j$ are the multilinear Hölder inequalities.  
This result is in a less definitive form, as we do not classify ``Heisenberg-like groups''.

Much of our work concerns the case when the groups $G$ and $\bsG$ are compact Lie groups; in this case, we refer to the datum $(G, \bss, \bsp)$ as a Brascamp--Lieb datum of compact Lie groups.
(It should be noted that the image of a surjective homomorphism of a compact Lie group onto a locally compact group is in fact a compact Lie group, so it suffices to suppose that $G$ is a compact Lie group.)
When the groups $G$ and $\bsG$ are compact, $\bss$ is obviously proper.

We use the structure theory of compact Lie groups to determine the finiteness of the Brascamp--Lieb constant.
We write $G_e$ for the connected component of a topological group $G$.

\begin{thmB}
Suppose that  $(G, \bss,\bsp)$ is a canonical Brascamp--Lieb datum of compact Lie groups. 
Let $\phi_j$ be the restriction of $\sigma_j$ to $G_e$ with codomain $(G_j)_e$. 
Then  $\BL(G, \bss, \bsp)$ is finite if and only if $\BL(G_e, \bsphi, \bsp)$ is finite.
\end{thmB}

This allows us to focus on compact connected Lie groups. 

The appropriate analogue of condition \eqref{eq:codimension-condition-group} enables us to give a finiteness condition for the Brascamp--Lieb constant on a compact Lie group.
To state our condition, we need more notation.
For a compact Lie algebra $\Lie{g}$, we write $\Lie{I}(\Lie{g})$ for the collection of all Lie ideals in $\Lie{g}$, and for a compact Lie group $G$, we write $\Lie{N}(G)$ for the collection of all closed connected normal subgroups of $G$.

\begin{defn}
Let $(G,\bss,\bsp)$ be a Brascamp--Lieb datum of compact connected Lie groups, let $\Lie{g}$ and $\Lie{g}_j$ be the Lie algebras of $G$ and $G_j$, and let $\mathrm{d}\sigma_j\colon  \Lie{g} \to \Lie{g}_j$ be the derivative of the homomorphism $\sigma_j\colon  G \to G_j$.
The codimension conditions are the requirements that 
\begin{equation}\label{eq:codimension-condition}
\sum_j \frac{1}{p_j} \lpar \dim(\mathrm{d}\sigma_j(\Lie{g})))  - \dim( \mathrm{d}\sigma_j (\Lie{h} ) \rpar  
\leq  \dim(\Lie{g}) - \dim(\Lie{h})
\qquad\forall \Lie{h} \in \Lie{I}(\Lie{g});
\end{equation}
or
\begin{equation}\label{eq:codimension-condition-gp}
\sum_j \frac{1}{p_j} \lpar \dim(G_j)  - \dim( \sigma_j (H ) \rpar  
\leq  \dim(G) - \dim(H)
\qquad\forall H \in \Lie{N}(G).
\end{equation}
\end{defn}

Lie theory shows that \eqref{eq:codimension-condition} implies \eqref{eq:codimension-condition-gp}, but not necessarily vice versa; this reverse implication follows from Theorem C.

\begin{thmC}
Suppose that $(G,\bss,\bsp)$ is a canonical datum of compact Lie groups.
If the codimension condition \eqref{eq:codimension-condition-gp} holds, then $\BL(G,\bss,\bsp)$ is finite, while if $\BL(G,\bss,\bsp)$ is finite, then the codimension condition \eqref{eq:codimension-condition} holds.
If all $G_j$ are connected, the Haar measures of $G$ and the $G_j$ are all normalised to be $1$, and $\BL(G,\bss,\bsp)$ is finite, then 
\[
\BL(G, \bss, \bsp)=1.
\]
\end{thmC}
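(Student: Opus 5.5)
By Theorem B, finiteness for $(G,\bss,\bsp)$ is equivalent to finiteness for the restricted datum on identity components. So for the finiteness statements I will work with $G$ and all $G_j$ compact and connected.

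\emph{Necessity.} Assume the Brascamp--Lieb constant is finite, and fix an ideal $\Lie{h}$ of $\Lie{g}$. Let $H$ be the corresponding connected normal subgroup; it need not be closed, so I pass to its closure $\bar H$. Around the identity of $G/\bar H$ I take a small ball of radius $\epsilon$ in a bi-invariant metric. I then test the inequality on functions $f_j$ equal to indicators of $\epsilon$-neighbourhoods of $\sigma_j(\bar H)$ in $G_j$.
\begin{itemize}
\item The $\epsilon$-neighbourhood of $\bar H$ in $G$ lies in the set where $\prod_j f_j\circ\sigma_j=1$. So the left-hand side is at least of order $\epsilon^{\dim G-\dim \bar H}$.
\item The right-hand side is of order $\prod_j \epsilon^{(\dim G_j-\dim\sigma_j(\bar H))/p_j}$.
\item Letting $\epsilon\to0$ gives the codimension inequality for $\bar H$.
\end{itemize}
To get \eqref{eq:codimension-condition} for $\Lie{h}$ itself I must compare $\dim\bar H$ with $\dim\Lie{h}$. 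On the semisimple part, ideals are sums of simple factors and so are closed. The only discrepancy therefore comes from the centre, where $\Lie{h}\cap\Lie{z}$ may generate a dense subtorus. I would handle that part by the Bennett--Jeong torus argument: test on Fourier-type functions (characters, or tubes around subspaces in the universal cover). This yields the linear-algebra dimension inequality on $\Lie{z}$ directly, as in \cite{BJ22}. Combining the semisimple and central parts gives \eqref{eq:codimension-condition}.

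\emph{Sufficiency.} Assume \eqref{eq:codimension-condition-gp} holds. I would induct on $\dim G$ using the structure $G=(Z_e\times K)/F$, where $Z_e$ is the central torus, $K$ is semisimple simply connected, and $F$ is finite.
\begin{itemize}
\item For each closed connected normal $N$, I split the integral over $G$ by Weil's formula into an integral over $G/N$ of integrals over cosets of $N$.
\item On each coset I apply the Brascamp--Lieb inequality for the restricted datum $(N,\sigma_j|_N)$ with suitable exponents. I then apply the quotient datum on $G/N\to G_j/\sigma_j(N)$ to the resulting fibre norms. This is the standard factorisation through critical subgroups from \cite{BCCT08, BJ22}.
\item Each simple factor of $K$ meets each $\sigma_j$ either trivially or isomorphically.
\item For a datum on a simple group where every $\sigma_j$ is injective or trivial, the condition forces $\sum_{j:\sigma_j\ne1}1/p_j\ge1$. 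Finiteness then follows from Hölder's inequality on a probability space.
\item The abelian (torus) pieces are handled by \cite{BJ22}.
\end{itemize}
Iterating over a composition series of closed connected normal subgroups gives finiteness. The step I expect to be the main obstacle is checking that the codimension conditions pass to both the sub-datum and the quotient datum. Concretely, I would choose $N$ minimal among the normal subgroups where \eqref{eq:codimension-condition-gp} is tight, or else take $N$ to be a simple factor when nothing is tight.

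\emph{The constant equals $1$.} With all Haar measures normalised to $1$, the choice $f_j\equiv1$ gives $\BL\ge1$. For the upper bound, the inductive argument uses only steps that are exact on probability spaces, so all constants are $1$:
\begin{itemize}
\item Hölder's inequality with $\sum 1/p_j\ge1$ has constant $1$ on probability spaces.
\item The compact connected abelian case has constant $1$ by \cite{BJ22}, since connected tori have no nontrivial open subgroups.
\item Weil's formula preserves the normalisations.
\end{itemize}
Connectedness of the $G_j$ is needed here. Otherwise, indicators of open subgroups can produce constants larger than $1$.
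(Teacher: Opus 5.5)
\textbf{Sufficiency.} As written, your sufficiency half has a step that fails, and it leaves open the point you yourself flag as the main obstacle.

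First, the direction of the exponent condition is wrong. For a datum on a simple factor $S$, the only codimension condition is the one at $\{e\}$, namely $\sum_j \frac{1}{p_j}\dim\sigma_j(S)\le\dim S$, that is, $\sum_{j:\sigma_j|_S\ne 1}1/p_j\le 1$. It does not force $\ge 1$. H\"older's inequality on a probability space has constant $1$ exactly when $\sum_j 1/p_j\le 1$. For $\sum_j 1/p_j>1$ it fails: take $\sigma_1=\sigma_2=\mathrm{id}$, $p_1=p_2=1$ and $f_1=f_2$ an approximate identity. You use the wrong direction twice, in the finiteness step and in the ``constant equals $1$'' step.

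Second, and more importantly, you never show that the sub-datum on a simple factor satisfies its codimension condition when no proper nontrivial $N$ is critical. Without this, the factorisation through $S$ cannot be fed back into the induction. The missing idea is a complement argument. Apply \eqref{eq:codimension-condition-gp} to $S^\perp$, the product of the other simple factors with $Z(G)_e$. Since $\mathrm{d}\sigma_j(\Lie{s})$ is simple or zero and commutes with $\mathrm{d}\sigma_j(\Lie{s}^\perp)$, the two images meet only in $\{0\}$. Hence $\dim\sigma_j(G)-\dim\sigma_j(S^\perp)=\dim\sigma_j(S)$, and the inequality at $S^\perp$ is exactly $\sum_{j:\sigma_j|_S\ne1}1/p_j\le 1$. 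The quotient datum needs nothing extra, because its condition at $N_1/N$ is literally the condition at $N_1$ on $G$. For a critical $N$, the sub-datum condition follows by subtracting the tight equation, as the paper does. The same complement trick even lets you factor through $G'$ and then through its simple factors with no search for critical subgroups.

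With these repairs your route is valid, but it differs from the paper's. The paper uses multilinear Riesz--Thorin to reduce to extreme points of $\Polytope$. At such a point, either some $N$ with $0<\dim N<\dim G$ is critical, or some $1/p_j\in\{0,1\}$ and $\sigma_j$ is removed via Propositions \ref{prop:pk=infty} and \ref{prop:pk=1}; the induction is on $(\dim G,J)$. The paper never splits off the torus and does not import \cite{BJ22}. Your route uses \cite{BJ22} as a black box for the abelian part, and in exchange makes transparent that the semisimple part is just H\"older.

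\textbf{Necessity.} Passing to $\bar H$ is what creates your difficulty with non-closed ideals, and the proposed repair is underspecified. The inequality you need to add to the semisimple one is the condition for the datum on $Z(G)_e$ at $\Lie{h}\cap\Lie{z}$. (The semisimple one is the condition for the datum on $G'$, which your test at the closed subgroup $H_sZ(G)_e$ does give.) To obtain the central inequality from \cite{BJ22}, you would first have to show that the central sub-datum has finite constant, and you do not do this. Alternatively, you could run a tube test directly on $G$; but then the closure was never needed.

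That is what the paper does. Fix a bounded ball $\Lie{b}$ in the ideal $\Lie{h}$ and test on the indicators of $\sigma_j(B(e,r)\exp\Lie{b})$. As $r\to 0$ with $\Lie{b}$ fixed:
\begin{itemize}
\item the left-hand side is $\gtrsim r^{\dim\Lie{g}-\dim\Lie{h}}$;
\item each measure satisfies $|\sigma_j(B(e,r)\exp\Lie{b})|\lesssim r^{\dim\mathrm{d}\sigma_j(\Lie{g})-\dim\mathrm{d}\sigma_j(\Lie{h})}$, whether or not $\exp\Lie{h}$ is closed.
\end{itemize}
So \eqref{eq:codimension-condition} holds for every ideal at once, with no splitting and no appeal to \cite{BJ22}.
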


It is noteworthy that all subspaces of a vector space are closed, and the BCCT conditions involve testing on an uncountable number of subspaces;
in tori, there are uncountably many ideals in the Lie algebra, and uncountably many analytic subgroups of the group, but only countably many of these are closed, so testing condition \eqref{eq:codimension-condition-gp} is ``easier'' than testing condition \eqref{eq:codimension-condition}.
Compact semisimple Lie algebras contain finitely many ideals, and compact semisimple Lie groups have finite many closed normal subgroups, so testing either condition  is very straightforward in the semisimple case.

Our next main result splits the abelian and semisimple cases.
For a compact connected Lie group $G$, we write $G'$ for its commutator subgroup and $Z(G)_e$ for the connected component of the centre of $G$ containing the identity $e$, or equivalently, the maximal central torus of $G$. 
As we shall see below, every compact connected Lie group is a product $G' Z(G)_e$ of $G'$, which is semisimple, and of $Z(G)_e$, which is a torus, and $G' \cap Z(G)_e$ is finite.
If $\sigma_j\colon  G \to G_j$ is a surjective open map of compact Lie groups, then $\sigma_j(G') = (\sigma_j(G))'$ and $\sigma_j(Z(G)_e) = Z(\sigma_j(G))_e$.
We write $\phi_j$  for the restriction of $\sigma_j$ to $G'$ with codomain $(\sigma_j(G))'$, and
$\psi_j$ for the restriction of $\sigma_j$ to $Z(G)_e$ with codomain $Z(G_j)_e$.

\begin{thmD}
Suppose that $(G, \bss,\bsp)$ is a canonical Brascamp--Lieb datum of compact Lie groups.
Then $\BL(G, \bss,\bsp)$ is finite if and only if $\BL(G', \bsphi,\bsp)$  and $\BL(Z(G)_e, \bspsi,\bsp)$  are finite.
\end{thmD}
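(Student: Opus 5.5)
The plan is to prove Theorem D entirely through the Lie-algebra codimension conditions, using Theorems B and C as black boxes. By Theorem B we may replace $G$ and the $G_j$ by their identity components, so we assume $G$ and $G_j$ are connected. The restricted data $(G',\bsphi,\bsp)$ and $(Z(G)_e,\bspsi,\bsp)$ are again canonical data of compact connected Lie groups: $G'$ and $Z(G)_e$ are closed subgroups of $\bsG$, and the $\phi_j$, $\psi_j$ are surjective onto $(G_j)'$ and $Z(G_j)_e$. Theorem C then says, for each of the three data, that the Lie-algebra condition \eqref{eq:codimension-condition} is necessary for finiteness, while the group condition \eqref{eq:codimension-condition-gp} is sufficient. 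Since \eqref{eq:codimension-condition} implies \eqref{eq:codimension-condition-gp} by Lie theory, all three statements are equivalent. So it suffices to show that \eqref{eq:codimension-condition} holds for $\Lie{g}$ if and only if it holds for both $\Lie{g}'=[\Lie{g},\Lie{g}]$ and $\Lie{z}=\Lie{z}(\Lie{g})$.

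The key structural step is to show that every ideal $\Lie{h}$ of $\Lie{g}=\Lie{g}'\oplus\Lie{z}$ splits as $\Lie{h}=(\Lie{h}\cap\Lie{g}')\oplus(\Lie{h}\cap\Lie{z})$. We prove this by viewing $\Lie{h}$ as a $\Lie{g}$-submodule of the adjoint representation. Write $\Lie{g}'=\Lie{k}_1\oplus\dots\oplus\Lie{k}_m$ with $\Lie{k}_i$ simple. Each $\Lie{k}_i$ is a nontrivial irreducible $\Lie{g}$-module on which only $\Lie{k}_i$ acts nontrivially, so the $\Lie{k}_i$ are pairwise nonisomorphic as $\Lie{g}$-modules. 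On the other hand, $\Lie{z}$ is exactly the trivial isotypic component. By complete reducibility, the $\Lie{g}$-submodule $\Lie{h}$ is the sum of its intersections with the isotypic components, which gives the splitting.

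Next we compute the dimensions appearing in \eqref{eq:codimension-condition}. The derivative $\mathrm{d}\sigma_j$ maps $\Lie{g}'$ into $\Lie{g}_j'$ and $\Lie{z}$ into $\Lie{z}(\Lie{g}_j)$, and these two subalgebras meet trivially. Hence
\[
\dim \mathrm{d}\sigma_j(\Lie{h})=\dim \mathrm{d}\sigma_j(\Lie{h}\cap\Lie{g}')+\dim \mathrm{d}\sigma_j(\Lie{h}\cap\Lie{z}),
\]
and the same formula holds with $\Lie{h}$ replaced by $\Lie{g}$. Consequently, the inequality \eqref{eq:codimension-condition} for the ideal $\Lie{h}=\Lie{a}\oplus\Lie{b}$ of $\Lie{g}$ is exactly the sum of two inequalities: the one for the ideal $\Lie{a}$ of $\Lie{g}'$ (with the maps $\mathrm{d}\phi_j$) and the one for the subspace $\Lie{b}$ of $\Lie{z}$ (with the maps $\mathrm{d}\psi_j$). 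The ideals of $\Lie{z}$ are all its subspaces.
\begin{itemize}
\item If both component conditions hold, adding them gives \eqref{eq:codimension-condition} for every ideal of $\Lie{g}$.
\item Conversely, applying the condition for $\Lie{g}$ to $\Lie{h}=\Lie{a}\oplus\Lie{z}$ makes the central terms cancel on both sides, leaving exactly the condition for $\Lie{a}$ in $\Lie{g}'$.
\item Similarly, applying it to $\Lie{h}=\Lie{g}'\oplus\Lie{b}$ leaves exactly the condition for $\Lie{b}$ in $\Lie{z}$.
\end{itemize}

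Combining this equivalence with the reduction of the first paragraph proves Theorem D. The main obstacle is the ideal-splitting step, specifically the claim that isomorphic simple factors cannot produce ``diagonal'' ideals. This is handled by noting that, as $\Lie{g}$-modules, the factors are distinguished by which summand acts nontrivially on each. A secondary point needing care is that the reduction to identity components via Theorem B is compatible with passing to $G'$ and $Z(G)_e$. This holds because both are contained in $G_e$, and their images under $\sigma_j$ are $(G_j)_e'$ and $Z(G_j)_e$.
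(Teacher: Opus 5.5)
Your proof is correct, but it takes a different route from the paper's.

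\textbf{The paper's route.} It stays at the group level. It passes to the finite cover $G'\times Z(G)_e$ of $G$ and transfers finiteness through the covering lemma and Corollary \ref{BL-covering}. It then identifies the lifted maps with $\phi_j\otimes\psi_j$, which tacitly also requires covering each $G_j$ by $(G_j)'\times Z(G_j)_e$. Sufficiency comes from the product formula of Corollary \ref{cor:breaking}, and necessity from testing on inputs $f_j\otimes 1$ and $1\otimes f_j$.

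\textbf{Your route.} You use Theorem C to turn finiteness, for each of the three data, into the Lie-algebra codimension condition; the easy implication from the algebra condition to the group condition closes the loop. You then show that every ideal splits as $(\Lie{h}\cap\Lie{g}')\oplus(\Lie{h}\cap\Lie{z})$. Finally you observe that the codimension inequality is additive across this splitting, and the choices $\Lie{b}=\Lie{z}$ or $\Lie{a}=\Lie{g}'$ isolate each component. All of these steps check out, including the non-isomorphism of the simple factors as $\Lie{g}$-modules. The splitting can also be seen directly: if $Y_i\neq 0$ is the $\Lie{k}_i$-component of an element of $\Lie{h}$, then $0\neq[\Lie{k}_i,Y_i]\subseteq\Lie{h}\cap\Lie{k}_i$, which forces $\Lie{k}_i\subseteq\Lie{h}$.

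\textbf{What each buys.} The paper's argument is essentially independent of the finiteness criterion. Finiteness is transferred by a local covering argument and the product formula, so it would survive without Theorem C. Your argument leans on Theorem C in both directions, including the nontrivial fact that the group and algebra conditions coincide. In exchange it avoids coverings and the tensor-product identification of the lifted maps altogether. It also explains the result at the level of the defining inequalities: each codimension inequality for $\Lie{g}$ is literally the sum of one for $\Lie{g}'$ and one for $\Lie{z}$. Finally, it supplies a real proof of the ideal splitting, which the paper asserts without detail in the lemma that an ideal of an ideal is an ideal.

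\textbf{One small correction.} Your remark that $G'$ is contained in $G_e$ is false for disconnected $G$; a finite nonabelian group is a counterexample. This is harmless, because $G'$, $Z(G)_e$, $\phi_j$ and $\psi_j$ are only set up for connected groups; the paper's proof also uses $G=G'Z(G)_e$. After applying Theorem B, simply read these objects as those of $G_e$.
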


Our final main result is to give an explicit computable value to the Brascamp--Lieb constant when it is finite.

\begin{thmE}
Suppose that $(G, \bss,\bsp)$ is a canonical Brascamp--Lieb datum of compact Lie groups.
If $\BL(G, \bss, \bsp)$ is finite, then
\begin{equation}\label{eq:BL-norm}
\BL(G, \bss, \bsp) 
= \max_{H \in \mathcal{O}(G)} \frac{ |H|_G}  {| \sigma_j(H) |_{G_j}^{1/p_j}} \,,
\end{equation}
where $\mathcal{O}(G)$ denotes the collection of open subgroups of $G$.
\end{thmE}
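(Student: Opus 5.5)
The plan is to prove matching lower and upper bounds, reducing the upper bound to the connected-component statement in Theorem C. The formula is read with a product over $j$ in the denominator, i.e.\ $|H|_G / \prod_j |\sigma_j(H)|_{G_j}^{1/p_j}$.

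\emph{Lower bound.} Take any open subgroup $H$ of $G$ and test \eqref{eq:usual} on $f_j = 1_{\sigma_j(H)}$. Since $\sigma_j$ is open, $\sigma_j(H)$ is an open, hence closed, subgroup of $G_j$, so $f_j$ is continuous. Because $x\in H$ implies $\sigma_j(x)\in\sigma_j(H)$ for every $j$, the left side is at least $|H|_G$, while the right side is $\prod_j |\sigma_j(H)|^{1/p_j}$. This gives $\BL \ge$ the displayed quotient for every $H$. The collection $\mathcal{O}(G)$ is finite, since every open subgroup contains $G_e$ and $G/G_e$ is finite; hence the maximum exists.

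\emph{Upper bound.} Let $f_j\ge 0$ and decompose $G$ into cosets $xG_e$, writing
\[
\int_G \prod_j f_j\circ\sigma_j = \sum_{x\in G/G_e}\int_{G_e}\prod_j f_j(\sigma_j(x)\sigma_j(y))\wrt y .
\]
On each coset, apply the connected inequality. By Theorem B together with finiteness, $\BL(G_e,\bsphi,\bsp)$ is finite, and by Theorem C with the natural normalisations this constant is $1$. Tracking the normalisations, the inner integral is bounded by
\[
|G_e|_G\prod_j \bigl(|(G_j)_e|^{-1}\textstyle\int_{\sigma_j(x)(G_j)_e} f_j^{p_j}\bigr)^{1/p_j}.
\]
Since each $\sigma_j$ is surjective and open, $\phi_j$ maps $G_e$ onto $(G_j)_e$. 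Setting $a_{j}(u)=\|f_j\|_{L^{p_j}(u(G_j)_e)}$ on the finite group $G_j/(G_j)_e$ with counting measure, the problem reduces to a Brascamp--Lieb inequality for the finite groups $\Gamma=G/G_e$ and $\Gamma_j=G_j/(G_j)_e$ with the induced homomorphisms $\bar\sigma_j$, with an overall factor $|G_e|\prod_j |(G_j)_e|^{-1/p_j}$.

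\emph{Finite-group step (main obstacle).} I then need $\sum_{\gamma\in\Gamma}\prod_j a_j(\bar\sigma_j\gamma) \le \max_{K\le\Gamma} |K| \prod_j |\bar\sigma_j(K)|^{-1/p_j}\prod_j\|a_j\|_{p_j}$, i.e.\ the constant of a finite-group datum is attained on indicators of subgroups. This is the discrete analogue of Christ's theorem \cite{Chr13}, which is known for abelian groups; here $\Gamma$ may be nonabelian. Preimages of subgroups $K\le\Gamma$ are exactly the open subgroups $H$, with $|H|=|K||G_e|$ and $|\sigma_j(H)|=|\bar\sigma_j(K)||(G_j)_e|$, so the constants match \eqref{eq:BL-norm}.

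To prove the finite inequality I would first try the following. By multilinear interpolation (Riesz--Thorin on finite measure spaces), the constant of a multilinear form with nonnegative kernel in $(1/p_j)$ is log-convex, but extremality may still require more. Alternatively, reduce to indicator inputs $a_j=1_{E_j}$ via a layer-cake/extreme-point argument (the form is multilinear and the supremum over $\ell^{p_j}$-balls of nonnegative functions), and then bound $|\{\gamma:\bar\sigma_j\gamma\in E_j\ \forall j\}|$ by a subgroup count, adapting Christ's entropy/combinatorial argument to nonabelian finite groups.

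If that fails in general, I would fall back on the compact-Lie structure. Using Theorem D and the fact that $G=G_eF$ for a finite subgroup $F$ (an extension of a component-group lift), I would reduce to cases where the component data come from abelian quotients, to which \cite{Chr13} applies directly.
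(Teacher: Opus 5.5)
Your architecture is the same as the paper's. The lower bound comes from testing on $f_j=1_{\sigma_j(H)}$; the paper phrases this via $H^*=\bigcap_j\sigma_j^{-1}(\sigma_j(H))\supseteq H$, which amounts to the same thing. The upper bound comes from the splitting $\BL(G,\bss,\bsp)\le\BL(G_e,\bsphi,\bsp)\,\BL(G/G_e,\bsds,\bsp)$ of \eqref{eq:BL-cpct-gp}, with the connected factor equal to $1$ by Theorem C. Your bookkeeping is correct: $|H|=|K|\,|G_e|$, $|\sigma_j(H)|=|\bar\sigma_j(K)|\,|(G_j)_e|$, with counting measure on $\Gamma$. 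It is also more careful than the paper's, which normalises $G/G_e$ to have mass $1$ while quoting a counting-measure formula.

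The gap is the finite-group step, which you leave open although it is the only nontrivial input remaining. The paper does not prove it either. It imports it, citing Christ \cite{Chr13} together with the extension of his formula to \emph{arbitrary} finite groups by Bennett and Jeong \cite{BJ22}. None of your proposed routes supplies this result.

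\emph{Log-convexity.} Log-convexity in $(1/p_j)$ only propagates the formula from points where it is already known. The corners of $[0,1]^J$, where it is elementary, do not suffice. For the Loomis--Whitney datum on $(\Z/q\Z)^3$, interpolating from the corners yields at best $q^{1/2}$ at $p_1=p_2=p_3=2$, whereas the constant there is $1$. The real work is at the non-corner vertices of the regions on which a single subgroup attains the maximum.

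\emph{Reduction to indicators.} This fails for $1<p_j<\infty$. The nonnegative part of the $\ell^{p_j}$ ball is strictly convex, so its extreme points include every nonnegative unit vector, not just normalised indicators. Layer-cake only turns a restricted bound $\Lambda(1_{E_1},\ldots,1_{E_J})\le C\prod_j|E_j|^{1/p_j}$ for $\Lambda(a)=\sum_{\gamma}\prod_j a_j(\bar\sigma_j\gamma)$ into $\Lambda(a)\le C\prod_j\int_0^\infty|\{a_j>t\}|^{1/p_j}\,\mathrm{d}t$. By Minkowski's inequality these factors strictly exceed $\|a_j\|_{\ell^{p_j}}$ unless $a_j$ is a multiple of an indicator.

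\emph{Structural fallback.} This cannot work at all. A finite group is a zero-dimensional compact Lie group with $G_e=\{e\}$, so Theorem E for finite $G$ \emph{is} the nonabelian finite-group statement. Theorem D concerns connected groups only and says nothing about $G/G_e$, which can be any finite group.

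You therefore need either to cite \cite{BJ22} for finite groups, as the paper does, or to prove the sharp finite nonabelian bound yourself.
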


For compact abelian groups, a similar result may be found in \cite{BJ22}, based on earlier work of Christ and others \cite{Chr13, CDKSY24}; in \cite{BJ22} it is shown that the result extends to all finite groups.

Here is a plan of the rest of the paper. 
Section \ref{sec:2} introduces our notation and presents the necessary background material, such as the quotient integral formula and some Lie theory. 
In Section \ref{sec:BL-general-groups}, we study how the Brascamp–Lieb constant behaves when we pass to compact quotient groups and open subgroups and clarify the passage to canonical data. 
In Section \ref{sec:nilpotent}, we study the Brascamp--Lieb constant on homogeneous Lie groups, and show  that the only Brascamp--Lieb inequality is Hölder's inequality when one consider a canonical datum and $G$ is restricted to be a Heisenberg-like group.
The main content of the final section is a necessary and sufficient condition for the finiteness of the Brascamp–Lieb constant on compact Lie groups, and an explicit formula for its computation.


\section{Preliminaries}\label{sec:2}
We begin with some results on general locally compact groups.
We recall that every locally compact group $G$ has a left Haar measure, that is, a regular Borel measure that is left translation invariant, that is, $\labs xE\rabs $, the measure of $xE$,  is equal to $\labs E\rabs$, the measure of $E$, for all compact subsets $E$ of $G$; the Haar measure is unique up to a multiplicative factor.
We write $\mathrm{d}x$, $\mathrm{d}y$, etc., for the element of Haar measure.
The group $G$ is said to be \emph{unimodular} when the Haar measure is also right translation invariant.

\subsection{Quotient integral formula}
The quotient integral formula can be formulated in the setting of locally compact groups.

\begin{thm}[Quotient integral formula \cite{DE09}]\label{thm:quotient-integral-formula}
Let $G$ be a locally compact group and $N$ be a closed normal subgroup of $G$. 
The Haar measures on $G$, on $N$ and on $G/N$ may be normalised such that \begin{align}\label{eq:quotient-integral-formula}
\int_G f(x) \wrt x =\int_{{G}/{N}}\int_{N}f(xy)\wrt y\wrt x
\qquad\forall f\in C_c(G) .
\end{align}
\end{thm}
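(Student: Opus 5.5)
The plan is the standard construction of the quotient measure via the averaging map $T\colon C_c(G)\to C_c(G/N)$, $Tf(xN)=\int_N f(xy)\wrt y$, followed by uniqueness of Haar measure on $G/N$. Fix left Haar measures $\mathrm{d}x$ on $G$ and $\mathrm{d}y$ on $N$.

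First, $Tf$ is well defined: replacing $x$ by $xn$ with $n\in N$ changes $\int_N f(xny)\wrt y$ into $\int_N f(xy)\wrt y$ by left invariance of the Haar measure on $N$. Next, $Tf$ is continuous and has compact support. If $\supp f\subseteq K$ then $\supp Tf\subseteq q(K)$, where $q\colon G\to G/N$ is the quotient map. Continuity comes from uniform continuity of $f$: for $x'$ near $x$, $|f(x'y)-f(xy)|$ is uniformly small, and $y$ ranges over the compact set $N\cap x^{-1}K'$ for a fixed compact neighbourhood $K'$ of $K$.

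The key step, which I expect to be the main obstacle, is surjectivity of $T$ onto $C_c(G/N)$, together with the fact that $Tf\ge0$ can be arranged for nonnegative targets. Given $\phi\in C_c(G/N)$, I would choose a compact $K\subseteq G$ with $q(K)\supseteq\supp\phi$. This uses local compactness and openness of $q$: cover $\supp\phi$ by images of finitely many compact neighbourhoods. Then take $g\in C_c(G)$ with $g\ge0$ and $g>0$ on $K$, so that $Tg>0$ on $\supp\phi$. Setting $f=(\phi\circ q)\, g/(Tg\circ q)$, extended by $0$, gives $f\in C_c(G)$ with $Tf=\phi$.

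Now define the functional $\Lambda(\phi)=\int_G f(x)\wrt x$ for any $f$ with $Tf=\phi$. To see that this is well defined, one must show $Tf=0\Rightarrow\int_G f=0$. Take $h\in C_c(G)$ with $Th=1$ on $q(\supp f)$, by the same construction. Then
\[
0=\int_G h(x)\,Tf(xN)\wrt x=\int_G\int_N h(x)f(xy)\wrt y\wrt x .
\]
Applying Fubini, the substitution $x\mapsto xy^{-1}$ and the modular function of $G$ gives
\[
\int_G f(x)\int_N h(xy^{-1})\Delta_G(y)^{-1}\wrt y\wrt x = 0 .
\]
Since $N$ is normal, $\Delta_G|_N=\Delta_N$; this is the standard fact for normal subgroups and is where normality is really used. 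Hence the inner integral equals $Th(xN)=1$ on $\supp f$, and so $\int_G f=0$.

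With this in hand, $\Lambda$ is a positive linear functional on $C_c(G/N)$, and it is left $G$-invariant because $T(\lambda_g f)=\lambda_{gN}Tf$. By the Riesz representation theorem and uniqueness of Haar measure, $\Lambda$ is integration against a left Haar measure on $G/N$, which is exactly the formula \eqref{eq:quotient-integral-formula}. The normalisation freedom is clear: any two of the three Haar measures determine the third.
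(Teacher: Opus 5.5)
The paper does not prove this statement; it quotes it from \cite{DE09}. Your individual steps are sound. $T$ is well defined, continuous and compactly supported. Your preimage $f=(\phi\circ q)\,g/(Tg\circ q)$ works, and is nonnegative when $\phi\ge 0$. The Fubini computation is right: the identities $\int_G F(xy)\wrt x=\Delta_G(y)^{-1}\int_G F(x)\wrt x$ and $\int_N \psi(y^{-1})\Delta_N(y)^{-1}\wrt y=\int_N\psi(y)\wrt y$ make the inner integral equal to $Th(xN)$ exactly when $\Delta_G=\Delta_N$ on $N$.

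The trouble is that identity. It is not a free-standing ``standard fact'': in the usual development it is \emph{deduced from} the quotient integral formula. Take any Haar measure $\nu$ on the group $G/N$. Then $f\mapsto\int_{G/N}Tf\,\mathrm{d}\nu$ is a nonzero, positive, left-invariant functional on $C_c(G)$, because $T(\lambda_g f)=\lambda_{gN}Tf$. Hence it is a left Haar integral on $G$, which after rescaling $\nu$ is already the theorem. Testing it on $f(\cdot\,n)$ with $n\in N$ then gives $\Delta_G(n)=\Delta_N(n)$. So, as a self-contained proof, your argument presupposes a consequence of what it is proving. Once this is seen, surjectivity of $T$ and well-definedness of $\Lambda$ are unnecessary in the normal case. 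The short uniqueness argument on $G$, rather than on $G/N$, is the natural proof, and it also yields the normalisation statement.

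If you want to keep your route, prove $\Delta_G|_N=\Delta_N$ independently; your own machinery can do it.

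\begin{itemize}
\item Since $N$ is normal, $\int_N\psi(x^{-1}yx)\wrt y=c(x)\int_N\psi(y)\wrt y$ defines a continuous character $c$ of $G$. Left invariance on $N$ gives $c(n)=\Delta_N(n)^{-1}$ for $n\in N$.
\item Put $\rho=c^{-1}\Delta_G^{-1}$, so that $\rho=\Delta_N/\Delta_G$ on $N$. Rerun your Fubini computation with $h\rho$ in place of $h$. It shows that $\Lambda_\rho(Tf)=\int_G f\rho$ is well defined.
\item This functional satisfies $\Lambda_\rho(\lambda_{gN}\phi)=\rho(g)\Lambda_\rho(\phi)$. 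For $g\in N$, $\lambda_{gN}$ is the identity on $G/N$, which forces $\rho=1$ on $N$.
\end{itemize}

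What your completed route buys is generality. It is Weil's construction of a $G$-invariant measure on $G/H$ for any closed subgroup $H$ with $\Delta_G|_H=\Delta_H$, where $G/H$ need not be a group. For the statement at hand, the direct argument on $G$ is shorter and avoids the circularity entirely.
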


If $N$ is open in $G$, then it is natural to take the Haar measure on $N$ to be the restriction of that on $G$.
In this case, the space $G/N$ is discrete, and when it equipped with counting measure, the quotient integral formula applies.  
We call these choices the standard open subgroup normalisation.

Likewise, if $N$ is compact, it is natural to normalise its Haar measure to have total mass  $1$.
The choice of measure on $G/N$ such that the quotient integral formula holds is then called the standard compact quotient normalisation.

When a subgroup is both compact and open, these normalisations may differ, and so care is needed.


\subsection{Open and proper homomorphisms between locally compact groups}
For the proof of our first result, see \cite{CHM24}.

\begin{lem}\label{lem:good-or-bad}
Suppose that $\sigma\colon  G \to H$ is a homomorphism of locally compact groups.
If $\sigma$ is open, then $|\sigma(U)| > 0$ for all nonempty open subsets $U$ of $G$.
If $\sigma$ is not open, then $|\sigma(V)| = 0$ for all compact subsets $V$ of $G$.
Finally, $\sigma$ is open if and only if $\sigma(G_0)$ is open in $H$ for all open subgroups $G_0$ of $G$.
\end{lem}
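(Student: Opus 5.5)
The plan has three parts, one for each assertion, and uses Haar measure on $H$ together with a Baire-type argument.

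\emph{First assertion.} Suppose $\sigma$ is open and $U \subseteq G$ is nonempty and open. Then $\sigma(U)$ is a nonempty open subset of $H$. Every nonempty open set has positive Haar measure: otherwise countably many of its translates would cover any compact set, which would force the Haar measure of every compact set to be $0$. So $|\sigma(U)| > 0$.

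\emph{Third assertion.} If $\sigma$ is open, then $\sigma(G_0)$ is open for every open subgroup $G_0$. For the converse, suppose $\sigma(G_0)$ is open for every open subgroup $G_0$, and let $U$ be an open neighbourhood of $e$. Choose a compact symmetric neighbourhood $V$ of $e$ with $V^2 \subseteq U$, and let $G_0 = \bigcup_n V^n$. This is an open, $\sigma$-compact subgroup. By hypothesis $\sigma(G_0) = \bigcup_n \sigma(V)^n$ is an open subgroup of $H$, so it is a locally compact group in its own right. Write $G_0 = \bigcup_k x_k V$ for countably many $x_k$; this is possible because $G_0$ is $\sigma$-compact and $V$ has interior. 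Then $\sigma(G_0) = \bigcup_k \sigma(x_k)\sigma(V)$ is a countable union of compact sets. By the Baire category theorem, some $\sigma(V)$-translate has nonempty interior. Hence $\sigma(V)^{-1}\sigma(V) \subseteq \sigma(U)$ contains a neighbourhood of $e$. Openness at arbitrary points then follows by translation.

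\emph{Second assertion.} Suppose $\sigma$ is not open. By the third assertion, there is an open subgroup $G_0$ such that $\sigma(G_0)$ is not open. We may take $G_0$ to be $\sigma$-compact, generated by a compact neighbourhood as above, and arrange that it contains a given compact set $V$. This needs a little care: if $\sigma(G_0)$ is open for the $\sigma$-compact open subgroup $G_0 \supseteq V$, then one should show $\sigma$ is open anyway. The Baire argument above only ever used $\sigma$-compact subgroups, so the non-openness can indeed be witnessed by such a $G_0$; and enlarging $G_0$ to contain $V$ keeps $\sigma(G_0)$ non-open, since an open subgroup of $G_0$'s image containing $\sigma(\text{old } G_0)$ as a finite-index-type union would otherwise give openness.

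The key claim is that $K = \sigma(G_0)$, a $\sigma$-compact subgroup of $H$ which is not open, has Haar measure $0$. Suppose instead that $|K| > 0$. Then $K$ contains a compact set $C$ of positive measure, because $K$ is a countable union of compact sets. By the Steinhaus theorem, $C^{-1}C \subseteq K$ contains a neighbourhood of $e$, so $K$ is open, a contradiction. Therefore $|\sigma(V)| \le |K| = 0$.

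\emph{Main obstacle.} I expect the hardest step to be the reduction to a $\sigma$-compact $G_0$ containing $V$ whose image is still not open. I would handle it by proving directly the following: if $\sigma(W)$ has positive measure for some compact $W$, then Steinhaus applied to $\sigma(W)^{-1}\sigma(W) = \sigma(W^{-1}W)$ shows that $\sigma$ of every neighbourhood containing $W^{-1}W$ has interior. Combined with the Baire argument applied to a small neighbourhood, this gives openness of $\sigma$. This route avoids the subgroup bookkeeping altogether.
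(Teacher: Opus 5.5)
The paper does not prove this lemma; it only cites \cite{CHM24}, so there is no in-text argument to compare with. Judged on its own, your proposal is correct in substance and follows the standard Baire/Steinhaus route. The first assertion is fine as written. So is the Baire proof of the converse in the third assertion. The restriction to $\sigma$-compact open subgroups there is genuinely needed: for the identity map from $\R$ with the discrete topology onto $\R$, the image of the open subgroup $G$ is open, yet $\sigma$ is not open.

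The weak point is the reduction in the second assertion. Your reason that enlarging $G_0$ to contain $V$ keeps $\sigma(G_0)$ non-open (a ``finite-index-type union'') is not valid, because the old subgroup can have infinite index in the new one. What you need is the following statement, which your Baire argument proves once phrased properly: \emph{if $G_0$ is any $\sigma$-compact open subgroup with $\sigma(G_0)$ open, then $\sigma$ is open.} To see this, given $U$, choose a compact symmetric $V\subseteq G_0$ with $V^2\subseteq U$. Cover $G_0$ by countably many translates of $V$, and apply Baire in the open set $\sigma(G_0)$. It follows that any $\sigma$-compact open subgroup containing your compact set has non-open image, for instance the subgroup generated by that set and a compact neighbourhood of $e$. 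Your Steinhaus step then shows this image is null.

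Your fallback in the last paragraph also works, and it can be simplified so that it needs no subgroups and no Baire. Suppose $|\sigma(W)|>0$ for some compact $W$, and let $U$ be a neighbourhood of $e$. Pick a compact symmetric $V$ with $V^2\subseteq U$ and cover $W$ by finitely many translates $x_iV$. Left invariance gives $|\sigma(V)|>0$. Steinhaus then makes $\sigma(V)\sigma(V)^{-1}=\sigma(V^2)\subseteq\sigma(U)$ a neighbourhood of $e$, so $\sigma$ is open. This proves the second assertion directly. It also gives the converse in the third: apply it to the subgroup $\bigcup_n V^n$. That subgroup's image is open, hence of positive measure, and is a countable union of the compact sets $\sigma(V^n)$, so some $\sigma(V^n)$ has positive measure.
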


The next three results are proved in \cite{BC25} for the abelian case, but the proofs go through in the general case with the only substantial change being the addition of the word ``normal'' in Proposition \ref{prop:new-homos-from-old}.
When $G$ is abelian, normality is not an issue.

\begin{lem}\label{lem:proper-homos}  
Suppose that $\sigma\colon  G \to H$ is a homomorphism of locally compact groups.
Then $\sigma$ is proper if and only if $\ker(\sigma)$ is compact, $\sigma(G)$ is closed in $H$, and $\sigma\colon  G \to \sigma(G)$ is open when $\sigma(G)$ carries the relative topology as a subspace of $H$.
\end{lem}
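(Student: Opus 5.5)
We prove the two implications separately, writing $K = \ker(\sigma)$.

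\emph{Proper implies the three conditions.} Suppose first that $\sigma$ is proper. Since $\{e_H\}$ is compact, $K = \sigma^{-1}(\{e_H\})$ is compact.

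To show that $\sigma(G)$ is closed, take $h$ in the closure of $\sigma(G)$ and a compact neighbourhood $U$ of $h$. Then $\sigma^{-1}(U)$ is compact, so $\sigma(\sigma^{-1}(U)) = U \cap \sigma(G)$ is compact, hence closed. Since $h$ lies in the closure of $U\cap\sigma(G)$, it follows that $h \in \sigma(G)$.

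For openness onto $\sigma(G)$, we argue by contradiction. Suppose $V$ is an open neighbourhood of $e$ in $G$ such that $\sigma(V)$ is not a relative neighbourhood of $e$ in $\sigma(G)$. Then there is a net $g_\alpha$ in $G$ with $\sigma(g_\alpha) \to e$ and $\sigma(g_\alpha) \notin \sigma(V)$, so $g_\alpha \notin VK$. Fix a compact neighbourhood $U$ of $e_H$. Eventually $g_\alpha$ lies in the compact set $\sigma^{-1}(U)$, so a subnet converges to some $g$. Continuity gives $\sigma(g)=e$, that is, $g\in K$. But $VK$ is an open set containing $K$, so the subnet eventually lies in $VK$, a contradiction. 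Openness at the identity, combined with translation, then gives openness of $\sigma\colon G\to\sigma(G)$.

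\emph{The three conditions imply proper.} Conversely, suppose the three conditions hold. Then $\sigma$ induces a continuous bijective homomorphism $G/K \to \sigma(G)$, and this map is open. It is therefore a homeomorphism onto $\sigma(G)$, which is a closed subset of $H$. Given a compact $C \subseteq H$, the set $C\cap\sigma(G)$ is compact, so its preimage $E$ in $G/K$ is compact.

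The main obstacle is to show that the preimage of a compact set under the quotient map $q\colon G \to G/K$, with $K$ compact, is compact. The plan is as follows. Cover $E$ by finitely many sets $q(x_iW)$, where $W$ is a relatively compact open neighbourhood of $e$; this is possible because $q$ is open. Then $q^{-1}(E)\subseteq \bigcup_i x_i \overline{W}K$, which is compact. Since $q^{-1}(E)$ is closed, it is compact. Hence $\sigma^{-1}(C) = q^{-1}(E)$ is compact, and $\sigma$ is proper. Normality of $K$ is automatic here, since $K$ is a kernel.
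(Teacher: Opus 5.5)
Your proof is correct. The paper gives no argument of its own for this lemma. It cites \cite{BC25}, where the result is proved for abelian groups, and simply asserts that commutativity is not used.

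Your argument supplies that verification in a self-contained way:
\begin{itemize}
\item In the forward direction, the net argument at the identity is essentially the standard fact that a proper map between locally compact Hausdorff spaces is closed, specialised to this setting. It uses only properness and local compactness of $H$.
\item In the converse, the key point is that the quotient map $q\colon G\to G/K$ by a compact subgroup is proper. Your covering argument, with $q^{-1}(q(x_iW)) = x_iWK \subseteq x_i\overline{W}K$, is the standard proof of this.
\end{itemize}
Nothing in either direction depends on commutativity. Normality of $K$ enters only so that $G/K$ is a group; the argument works verbatim for the space of left cosets.

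The paper's appeal to \cite{BC25} is shorter, but it leaves the reader to check that the abelian proof hides no use of commutativity. Your version makes that explicit.
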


\begin{prop}\label{prop:new-homos-from-old}
Suppose that $\sigma\colon G \to H$ is a locally compact group homomorphism and $N$ is a closed normal subgroup of $G$ such that $\sigma(N)$ is normal in $H$. 
Then the restricted mapping $\rho\colon  N \to {\sigma(N)}\afterbar$ and the induced quotient mapping $\dot\sigma\colon  G/N \to H/({\sigma(N)}\afterbar)$ are also locally compact group homo\-mor\-phisms.
Moreover, $\sigma$ is proper if and only if $\rho$ and $\dot{\sigma}$ are proper, and if $\sigma$ is proper then $\sigma(G)$ is closed in $H$.
\end{prop}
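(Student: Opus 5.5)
We need to prove Proposition \ref{prop:new-homos-from-old}, using Lemma \ref{lem:proper-homos} throughout. Write $M = \sigma(N)\afterbar$ for the closure of $\sigma(N)$ in $H$. Since $\sigma(N)$ is normal in $H$, so is its closure, and so $H/M$ is a locally compact group.

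\textbf{Continuity.} The restriction $\rho\colon N\to M$ is a continuous homomorphism, and $M$ is closed, hence locally compact. Let $q_G\colon G\to G/N$ and $q_H\colon H\to H/M$ be the quotient maps. Since $\sigma(N)\subseteq M$, the composite $q_H\circ\sigma$ is constant on cosets of $N$. It therefore factors through $G/N$ as a homomorphism $\dot\sigma$. Because $q_G$ is an open continuous surjection, $G/N$ carries the quotient topology, and so $\dot\sigma$ is continuous. This gives the first assertion.

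\textbf{$\sigma$ proper implies $\rho$ and $\dot\sigma$ proper.} Assume $\sigma$ is proper. Then $\sigma(G)$ is closed, by Lemma \ref{lem:proper-homos} (equivalently, proper maps into locally compact spaces are closed).
\begin{itemize}
\item For $\rho$: if $K\subseteq M$ is compact, then $\rho^{-1}(K)=\sigma^{-1}(K)\cap N$ is a closed subset of a compact set, hence compact. Restricting to the closed set $N$ also shows $\sigma(N)$ is closed, so in fact $M=\sigma(N)$.
\item For $\dot\sigma$: let $L\subseteq H/M$ be compact. Since $q_H$ is open and $H$ is locally compact, there is a compact $K\subseteq H$ with $q_H(K)\supseteq L$. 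Then
\[
\dot\sigma^{-1}(L) \subseteq q_G\bigl(\sigma^{-1}(KM)\bigr).
\]
Here $KM=K\sigma(N)$, and $h=\sigma(g)\in K\sigma(n)$ gives $g\in\sigma^{-1}(K)\,n$. So $\sigma^{-1}(KM)\subseteq \sigma^{-1}(K)N$, whose image under $q_G$ is $q_G(\sigma^{-1}(K))$, which is compact. Finally, $\dot\sigma^{-1}(L)$ is closed, hence compact.
\end{itemize}

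\textbf{$\rho$ and $\dot\sigma$ proper implies $\sigma$ proper.} This is the main step. I would check the three conditions of Lemma \ref{lem:proper-homos} for $\sigma$.
\begin{itemize}
\item Compact kernel: $\ker\sigma\cap N=\ker\rho$ is compact, and the image of $\ker\sigma$ in $G/N$ lies in $\ker\dot\sigma$, which is compact. An extension of a compact group by a compact group is compact, but $\ker\sigma\, N/N$ versus $\ker\sigma/(\ker\sigma\cap N)$ needs care. So I would instead argue directly with nets, or use the compact-preimage criterion below.
\item Compact preimages (the cleaner route): let $K\subseteq H$ be compact. The set $\dot\sigma^{-1}(q_H(K))$ is compact in $G/N$, so it is $q_G(C)$ for some compact $C\subseteq G$. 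Hence $\sigma^{-1}(K)\subseteq C N$. Writing $x=cn$ with $\sigma(x)\in K$ gives
\[
\sigma(n)\in \sigma(C)^{-1}K,
\]
which is compact. Then $n\in\rho^{-1}\bigl(\sigma(C)^{-1}K\cap M\bigr)$, a compact set $D$. So $\sigma^{-1}(K)\subseteq CD$, which is compact, and $\sigma^{-1}(K)$ is closed. Hence $\sigma$ is proper.
\end{itemize}

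The final claim, that $\sigma(G)$ is closed when $\sigma$ is proper, follows from Lemma \ref{lem:proper-homos}. The main subtlety is the lifting of compact sets through the open quotient maps, which uses local compactness. The role of normality of $\sigma(N)$ is only to make $H/M$ a group. As in \cite{BC25}, the remaining arguments are unchanged from the abelian case.
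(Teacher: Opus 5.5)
Your argument is correct. The paper gives no proof of its own here. It only asserts that the abelian-case proof in \cite{BC25} carries over once $\sigma(N)$ is assumed normal, which, as you say, is exactly what is needed for $H/\sigma(N)\afterbar$ to be a group. So your write-up is a self-contained proof rather than a variant of an argument in the text.

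Its merit is that both directions work directly from the definition of properness (compact preimages). The only tool needed is the lifting of compact sets through the open quotient maps $q_G$ and $q_H$. In particular you never need the three-part characterisation of Lemma~\ref{lem:proper-homos}, which the paper also only cites. For the final closed-image claim, your parenthetical remark suffices: a proper continuous map into a locally compact Hausdorff space is closed.

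You correctly saw that the forward direction needs $M=\sigma(N)$, so that each element of $KM$ can be written as $k\sigma(n)$. That is where closedness of $\sigma(N)$ genuinely enters, and you establish it.

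Two small points of tidying:
\begin{itemize}
\item The abandoned ``compact kernel'' bullet in the converse direction is not used and should be deleted. Your compact-preimage argument already gives compactness of $\ker\sigma=\sigma^{-1}(\{e\})$.
\item In that direction the lifting gives a compact $C$ with $q_G(C)\supseteq\dot\sigma^{-1}(q_H(K))$ rather than equality. The containment is all you use, so you should state it that way.
\end{itemize}
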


We study homomorphisms $\bss\colon G\to \bsG $, where $\bsG \coloneqq G_1\times\dots \times G_J$ and $\bss = (\sigma_1, \dots, \sigma_J)$.

\begin{cor}\label{open}
The homomorphism $\bss\colon G\to \bsG $ is proper if and only if  the kernel $\ker(\bss)$ is compact, the subgroup $\bss(G)$ is closed in $\bsG $, and $\bss$ induces a homeomorphism of $G/\ker(\bss)$ onto $\bss(G)$ with the relative topology.
\end{cor}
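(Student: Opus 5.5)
This follows directly from Lemma \ref{lem:proper-homos} applied to the single homomorphism $\bss\colon G \to \bsG$, together with the first isomorphism theorem for topological groups. The plan is to prove the two implications separately.

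\emph{Forward direction.} Suppose $\bss$ is proper. By Lemma \ref{lem:proper-homos}:
\begin{itemize}
\item $\ker(\bss)$ is compact;
\item $\bss(G)$ is closed in $\bsG$;
\item $\bss\colon G \to \bss(G)$ is open when $\bss(G)$ carries the relative topology.
\end{itemize}
Next, factor $\bss = \tilde\bss \circ q$, where $q\colon G \to G/\ker(\bss)$ is the quotient map and $\tilde\bss\colon G/\ker(\bss) \to \bss(G)$ is the induced bijective homomorphism. Continuity of $\tilde\bss$ follows from the universal property of the quotient topology, since $\bss$ is continuous. Openness of $\tilde\bss$ follows because every open set in $G/\ker(\bss)$ has the form $q(U)$ with $U$ open in $G$, and $\tilde\bss(q(U)) = \bss(U)$ is open in $\bss(G)$. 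Hence $\tilde\bss$ is a homeomorphism.

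\emph{Reverse direction.} Suppose $\ker(\bss)$ is compact, $\bss(G)$ is closed, and $\tilde\bss$ is a homeomorphism onto $\bss(G)$. Then $\bss = \tilde\bss \circ q$ is a composition of open maps, since $q$ is always open. So $\bss\colon G \to \bss(G)$ is open for the relative topology, and Lemma \ref{lem:proper-homos} gives properness.

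Alternatively, properness can be checked directly. Let $K \subseteq \bsG$ be compact. Then $K \cap \bss(G)$ is compact because $\bss(G)$ is closed. Its preimage under $\tilde\bss$ is compact, and $q^{-1}$ of a compact set is compact because $\ker(\bss)$ is compact. This last fact is standard, but it is essentially what the lemma encodes.

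There is no real obstacle here. The only point needing care is the identification of openness of $\bss$ onto its image with $\tilde\bss$ being a homeomorphism, which uses that $q$ is open and surjective.
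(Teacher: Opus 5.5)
Your proof is correct and takes the same route the paper intends. The paper gives no separate argument: it presents the statement as an immediate consequence of Lemma \ref{lem:proper-homos} applied to the single homomorphism $\bss$, with details deferred to the abelian case. Your factorisation $\bss = \tilde\bss \circ q$ supplies the routine step of showing that $\bss$ being open onto $\bss(G)$ is equivalent to the induced map $\tilde\bss$ being a homeomorphism.
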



\subsection{Some Lie  theory}
We provide a short summary of the general theory of Lie groups that we shall use.
Later we describe homogeneous Lie groups and compact Lie groups in more detail.
Much more on Lie groups may be found in texts such as \cite{Knapp02}, \cite{Sep07} or \cite{Var84}.

A Lie group is a locally compact group which, as a topological space, is a manifold.
The solution to Hilbert's fifth problem shows that the manifold and the group operations may be taken to be analytic.
The Lie algebra $\Lie{g}$ of a Lie group $G$ is the tangent space $T_e(G)$ to $G$ at the identity $e$.
Every $X$ in $\Lie{g}$ is the value at $e$ of a unique left-invariant vector field $X\afterarrow $, and we define the commutator of elements of $\Lie{g}$ to be the unique element $[X,Y]$ of $\Lie{g}$ such that $ [X,Y]\afterarrow = [ X \afterarrow, Y \afterarrow ]$ (where the right hand term is the commutator of left-invariant vector fields, which is again left-invariant).
All our Lie groups and algebras are taken to be finite-dimensional.

The exponential mapping $\exp\colon  \Lie{g} \to G$ is defined by the condition that 
\[
\frac{ \mathrm{d} \exp(tX) }{\mathrm{d}t} = X\afterarrow_{\exp(tX)} 
\qquad\forall t \in \R \quad\forall X \in \Lie{g} ;
\]
that is, $t \mapsto \exp(tX)$ is an integral curve for the vector field $X \afterarrow$.
For all Lie groups $G$, there is a neighbourhood $U$ of $0$ in $\Lie{g}$ such that the restriction of $\exp$ to $U$ is a diffeomorphism onto its image in $G$.

Each homomorphism $\sigma\colon G \to H$ of Lie groups induces a homomorphism $\mathrm{d}\sigma\colon  \Lie{g} \to \Lie{h}$ of the corresponding Lie algebras.  
Conversely, every homomorphism $\mathrm{d}\sigma\colon  \Lie{g} \to \Lie{h}$ of Lie algebras induces a local homomorphism $\sigma$ of the corresponding connected Lie groups $G$ and $H$, that is, a mapping from $U$, a neighbourhood of the identity in $G$ to $H$ such that $\sigma(xy) = \sigma(x)\sigma(y)$ and $\sigma(x^{-1}) = \sigma(x)^{-1}$ when all the terms are defined.
However, $\sigma$ may not extend to a homomorphism defined on all $G$ unless $G$ is simply connected.

A connected Lie group $G$ has a connected simply connected covering group $G\aftertilde$, which is a Lie group with the same Lie algebra as $G$, and the topological covering map $\pi\colon  G\aftertilde \to G$ is a homomorphism.
Each homomorphism $\sigma\colon  G \to H$ induces a homomorphism $\tilde\sigma\colon  G \aftertilde \to H \aftertilde$.

\begin{defn}
Let $\Lie{g}$ be a Lie algebra.
A (Lie) subalgebra of $\Lie{g}$ is a linear subspace $\Lie{h}$ of $\Lie{g}$ such
that $[X,Y]\in\Lie{h}$ for all $X, Y\in\Lie{h}$.
An ideal of $\Lie{g}$ is a subalgebra $\Lie{h}$ of $\Lie{g}$ such that  $[X,Y]\in\Lie{h}$ for all $X\in\Lie{g}$ and $Y\in\Lie{h}$.
The commutator subalgebra $[\Lie{g}, \Lie{g}]$ of $\Lie{g}$ is the smallest subalgebra that contains all commutators $[X,Y]$, where $X, Y \in \Lie{g}$.
The centre $\Lie{z}(\Lie{g})$ of $\Lie{g}$ is the linear subspace of all $X \in \Lie{g}$ such that $[X,Y]=0$ for all $Y \in \Lie{g}$.
The direct sum of Lie algebras $\Lie{g}$ and $\Lie{h}$, written $\Lie{g} \oplus \Lie{h}$, is the vector space $\Lie{g} \oplus \Lie{h}$ equipped with the Lie bracket 
\[
[ (X,Y), (X', Y')] = ([ X,X'], [Y,  Y'] )
\qquad\forall X, X' \in \Lie{g} \quad\forall Y, Y' \in \Lie{h}.
\]
\end{defn}

One of the main aims of Lie theory is to explain the correspondence between connected subgroups of a Lie group $G$ and subalgebras of its Lie algebra $\Lie{g}$.
In general, this correspondence is bijective, but the subgroups need not be closed.

\begin{defn}\label{def:Killing}
Let $\Lie{g}$ be a Lie algebra, and define the bilinear form $B$ on $\Lie{g}$ by
\[
B(X,Y) = \trace(\ad(X) \ad(Y))
\qquad\forall X, Y \in \Lie{g} ;
\]
$B$ is called the Cartan--Killing form of $\Lie{g}$.
\end{defn}

An inner product on $T_e(G)$, that is, on $\Lie{g}$, induces a left-invariant Riemannian metric on $G$ by left translations. 
It is natural to use this inner product to define an orthonormal basis on the cotangent bundle $T^*(G)$ and hence to determine a left-invariant volume form on $G$, which gives us a Haar measure.
Likewise we may use it to determine a translation-invariant measure on $\Lie{g}$.

\section{The Brascamp--Lieb constant on locally compact groups}\label{sec:BL-general-groups}

We now consider Brascamp--Lieb inequalities on  locally compact groups. 
Recall that $\bss$ denotes a homomorphism $(\sigma_1, \dots, \sigma_J)$ from $G$ to $\bsG  \coloneqq  G_1 \times \dots \times G_J$.

\begin{lem}\label{lem:bss-must-be-proper}
Let $(G, \bss,\bsp)$ be a Brascamp--Lieb datum such that  $\BL(G, \bss,\bsp)$ is finite.
Then $\bss\colon  G \to \bsG $ is proper and either  $\sigma_j$ is open or $p_j = \infty$.
When $p_j = \infty$, we may replace $\sigma_j$ with the canonical projection of $G$ onto the quotient group $G / \ker(\sigma_j)$, so that $\sigma_j$ becomes open, without changing the Brascamp--Lieb constant.
\end{lem}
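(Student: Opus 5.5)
The plan is to test the inequality \eqref{eq:usual} on indicator functions of compact neighbourhoods, and to derive each conclusion by contradiction: if it fails, the left hand side can be made positive or unbounded while the right hand side stays bounded.

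\emph{Openness.} Suppose that $\sigma_{j_0}$ is not open and $p_{j_0} < \infty$. Fix a compact neighbourhood $V$ of $e$ in $G$, and for $j \ne j_0$ choose $f_j \in C_c(G_j)$ with $0 \le f_j \le 1$ and $f_j = 1$ on the compact set $\sigma_j(V)$. Choose $f_{j_0} \in C_c(G_{j_0})$ with $0 \le f_{j_0} \le 1$ and $f_{j_0} = 1$ on $\sigma_{j_0}(V)$. By Lemma \ref{lem:good-or-bad}, $|\sigma_{j_0}(V)| = 0$, so by outer regularity $f_{j_0}$ may be taken to have support of arbitrarily small measure, and hence $\norm{f_{j_0}}_{p_{j_0}}$ is arbitrarily small. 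The left hand side stays at least $|V| > 0$, while the other norms remain bounded; this contradicts finiteness of $\BL(G,\bss,\bsp)$. Here we use that the $\sigma_j(V)$ are fixed compact sets, so the $f_j$ for $j\neq j_0$ can be held fixed while $f_{j_0}$ varies.

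\emph{Properness.} Suppose that $\bss$ is not proper. Then there is a compact set $U = U_1 \times \dots \times U_J$ in $\bsG$ with $\bss^{-1}(U)$ not compact. Take $f_j \ge 0$ in $C_c(G_j)$ with $f_j = 1$ on $U_j$; by Urysohn's lemma we may take a compact neighbourhood $U_j'$ of $U_j$. We need $|\bss^{-1}(U')| = \infty$, and this is the main obstacle. I would use translation: since $\bss^{-1}(U)$ is closed and not compact, it contains infinitely many points $x_n$ with $x_n W$ pairwise disjoint, where $W$ is a fixed symmetric compact neighbourhood of $e$. Then $\bss(x_n W) \subseteq U \cdot \bss(W) =: U'$, which is compact. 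Replacing $U$ by $U'$, the set $\bss^{-1}(U')$ contains the disjoint sets $x_n W$, each of measure $|W| > 0$, so its measure is infinite. With the $f_j$ equal to $1$ on the $j$th projection of $U'$, the product $\prod_j f_j \circ \sigma_j$ is then not integrable, or the left hand side is infinite, while the right hand side is finite. This contradicts the inequality. The reduction of ``noncompact'' to ``contains an infinite $W$-separated family'' is a standard argument: choose the $x_n$ inductively outside the compact set $\bigcup_{m<n} x_m W W^{-1}$.

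\emph{The case $p_j = \infty$.} Let $N = \ker(\sigma_j)$, and define $\tau_j\colon G \to G/N$ to be the canonical projection, which is open. For $f_j \in C_c(G_j)$, the function $f_j \circ \sigma_j$ equals $g \circ \tau_j$, where $g = f_j \circ \bar\sigma_j$ and $\bar\sigma_j\colon G/N \to G_j$ is the induced continuous injection. Then $\norm{g}_\infty \le \norm{f_j}_\infty$, so any inequality for the new datum gives one for the old, and $\BL$ does not increase when $\sigma_j$ is replaced by $\tau_j$. Conversely, the left hand side is multilinear, and the essential supremum bounds the $j$th factor: for $g \in C_c(G/N)$,
\[
\bigglabs \int_G \prod_{k} (f_k \circ \sigma_k)\, (g\circ\tau_j) \biggrabs \le \norm{g}_\infty \int_G \prod_{k\ne j} |f_k\circ\sigma_k|.
\]
The right hand side is in turn bounded, using the original inequality with $f_j$ replaced by a function with $\norm{f_j}_\infty \le 1$ equal to $1$ on the relevant compact set, which can be arranged by Urysohn's lemma. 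Hence both constants coincide with the best constant for the $(J-1)$-fold inequality, so $\BL$ is unchanged.
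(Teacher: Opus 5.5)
Your openness and properness arguments are correct. They are the kind of test-function arguments the paper has in mind when it defers to the abelian case; compare the null-set argument in the proof of Proposition~\ref{prop:pk=1}. The $p_j=\infty$ part, however, does not work as written.

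\emph{First, a minor flaw.} The function $g=f_j\circ\bar\sigma_j$ is bounded and continuous on $G/N$, but it need not have compact support, because $\bar\sigma_j$ need not be proper. For example, if $G=\R$, $\sigma_j(x)=(e^{2\pi i x},e^{2\pi i\alpha x})\in\mathbb{T}^2$ with $\alpha$ irrational, and $f_j\equiv 1$, then $g\equiv 1$ on $\R$. So $g$ is not an admissible input for the new datum. Also, ``any inequality for the new datum gives one for the old'' would show that the old constant is at most the new one, which is the opposite of the direction you then state. This step is superfluous anyway, so drop it.

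\emph{Second, the real gap.} Your comparison with the $(J-1)$-fold constant takes $f_j$ equal to $1$ on ``the relevant compact set''. This presupposes that $\sigma_j\bigl(\bigcap_{k\ne j}\sigma_k^{-1}(\supp f_k)\bigr)$ is relatively compact. Properness of $\bss$ does not give this, because omitting $\sigma_j$ can destroy properness. You only learn that it holds a posteriori, once the $(J-1)$-fold constant is known to be finite, and that is what you are trying to prove.

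The repair is an exhaustion. For each compact $K\subseteq G$, take $h_K$ in $C_c(G_j)$ (or in $C_c(G/N)$ for the new datum) with $0\le h_K\le 1$ and $h_K=1$ on $\sigma_j(K)$ (or on $\tau_j(K)$). Apply the inequality of the datum in question, with constant $C$, to the inputs $|f_k|$ for $k\ne j$ and $h_K$. This gives $\int_K\prod_{k\ne j}|f_k\circ\sigma_k|\le C\prod_{k\ne j}\norm{f_k}_{L^{p_k}(G_k)}$. Taking the supremum over $K$ then bounds the integral over $G$, by inner regularity of Haar measure for this nonnegative continuous integrand.

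This is the same device that underlies Proposition~\ref{prop:pk=infty}: functions of sup norm at most $1$ that tend to $1$ uniformly on compacta. With it, the old constant, the new constant and the $(J-1)$-fold constant all coincide, as you claim.
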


\begin{proof}
This is proved for locally compact abelian groups in \cite{BC25}; the arguments do not rely on commutativity, and so work in our more general context.
\end{proof}

In light of this result, we may and shall assume that all $\sigma_j$ are open in what follows.

\begin{cor}\label{cor:kernel-compact}
Suppose that $(G, \bss,\bsp)$ is a Brascamp--Lieb datum.
If $\bigcap_j \ker(\sigma_j)$ is a noncompact subgroup of $G$, then $\BL(G, \bss,\bsp)$ is infinite. 
\end{cor}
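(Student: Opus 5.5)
The plan is to derive this directly from Lemma \ref{lem:bss-must-be-proper}, arguing by contraposition. Suppose that $\BL(G,\bss,\bsp)$ is finite. By Lemma \ref{lem:bss-must-be-proper}, the homomorphism $\bss\colon G\to\bsG$ is then proper. Since $\bss=(\sigma_1,\dots,\sigma_J)$ takes values in the product $\bsG$, its kernel is exactly $\ker(\bss)=\bigcap_j\ker(\sigma_j)$. Lemma \ref{lem:proper-homos}, or equivalently Corollary \ref{open}, states that a proper homomorphism has compact kernel. Hence $\bigcap_j\ker(\sigma_j)$ is compact, which contradicts the hypothesis. Therefore $\BL(G,\bss,\bsp)=\infty$.

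There is one point to check. When some $p_j=\infty$, Lemma \ref{lem:bss-must-be-proper} allows $\sigma_j$ to be replaced by the quotient map $G\to G/\ker(\sigma_j)$. This replacement does not change $\ker(\sigma_j)$, so the intersection of the kernels is unaffected and the argument above still applies.

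I do not expect any real obstacle, since the substance lies in Lemma \ref{lem:bss-must-be-proper}. If a self-contained argument were wanted instead, I would use the following direct construction. Let $K=\bigcap_j\ker(\sigma_j)$, and fix nonnegative $f_j\in C_c(G_j)$ equal to $1$ on a neighbourhood of $e$. Then $\prod_j f_j\circ\sigma_j$ is at least the indicator of $UK$ for some neighbourhood $U$ of $e$ in $G$. Because $K$ is a closed noncompact subgroup, $UK$ has infinite Haar measure, so the left-hand side of \eqref{eq:usual} is infinite while the right-hand side is finite.
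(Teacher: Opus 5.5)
Your proof is correct and matches the derivation the paper intends; the paper states the corollary without proof, immediately after Lemma \ref{lem:bss-must-be-proper}. Finiteness forces $\bss$ to be proper, and by Lemma \ref{lem:proper-homos} a proper homomorphism has compact kernel, here $\ker(\bss)=\bigcap_j\ker(\sigma_j)$. Your remark about $p_j=\infty$ is unnecessary, because the lemma already asserts properness of the original $\bss$; your direct alternative is also sound, since $K$ is normal, so $UK=KU$ contains infinitely many pairwise disjoint left translates of a fixed neighbourhood of $e$, each of the same positive left Haar measure.
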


By Proposition \ref{prop:new-homos-from-old}, if  $\sigma\colon  G \to H$ is a  homomorphism and $N$ is a closed normal subgroup of $G$, then $\sigma$ induces homomorphisms $\sigma|_N \colon  N \to \sigma(N) \afterbar$ and $\dot\sigma\colon  G/N \to H/ \sigma(N)\afterbar$. 

\begin{thm}\label{thm:breaking}
Suppose that  $(G, \bss,\bsp)$ is a Brascamp--Lieb datum of locally compact groups,  $N$ is a closed normal subgroup of $G$, and each $\sigma_j(N)$ is closed in $G_j$.
Then, when the Haar measures of $G$, $N$ and $G/N$, and of each $G_j$, $\sigma_j(N)$ and $G/\sigma_j(N)$ are normalised so that the quotient integral formula of Theorem \ref{thm:quotient-integral-formula} holds,
\begin{align}\label{break}
\BL(G, \bss, \bsp)\leq \BL(N, \bss|_{N}, \bsp) \BL(G/N, \bsds, \bsp).       
\end{align}
\end{thm}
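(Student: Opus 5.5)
We will prove the bound by disintegrating along the cosets of $N$ and applying the two Brascamp--Lieb inequalities in turn: first the one on $N$, fibre by fibre, and then the one on $G/N$. We may suppose both constants on the right of \eqref{break} are finite, since otherwise there is nothing to prove. By Proposition \ref{prop:new-homos-from-old}, with $\sigma_j(N)$ closed, each $\sigma_j$ induces $\rho_j = \sigma_j|_N \colon N \to \sigma_j(N)$ and $\dot\sigma_j \colon G/N \to G_j/\sigma_j(N)$, so both data make sense.

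We take nonnegative $f_j \in C_c(G_j)$; this suffices, because $\lvert \int \prod_j f_j\circ\sigma_j\rvert \le \int \prod_j \lvert f_j\rvert\circ\sigma_j$ and $\lVert \lvert f_j\rvert \rVert = \lVert f_j\rVert$. Theorem \ref{thm:quotient-integral-formula} gives
\[
\int_G \prod_j f_j(\sigma_j(x)) \wrt x = \int_{G/N} \int_N \prod_j f_j(\sigma_j(x)\sigma_j(y)) \wrt y \wrt{\dot x}.
\]
For fixed $x$, we define $f_j^{x}(n) = f_j(\sigma_j(x) n)$ for $n\in\sigma_j(N)$. This is continuous and compactly supported on the closed subgroup $\sigma_j(N)$. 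The inner integral equals $\int_N \prod_j f_j^x(\rho_j(y))\wrt y$, and it is bounded by $\BL(N,\bss|_N,\bsp)\prod_j \lVert f_j^x\rVert_{L^{p_j}(\sigma_j(N))}$. We then set
\[
F_j(\dot z) = \Bigl(\int_{\sigma_j(N)} f_j(zn)^{p_j}\wrt n\Bigr)^{1/p_j}
\qquad (z\in G_j),
\]
with the obvious modification when $p_j=\infty$. This is well defined on $G_j/\sigma_j(N)$ by left invariance of Haar measure on $\sigma_j(N)$, and $\lVert f_j^x\rVert_{p_j} = F_j(\dot\sigma_j(\dot x))$. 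The quotient integral formula on $G_j$, applied to $f_j^{p_j}$, gives $\lVert F_j\rVert_{L^{p_j}(G_j/\sigma_j(N))} = \lVert f_j\rVert_{L^{p_j}(G_j)}$; for $p_j=\infty$ this is immediate. Applying the Brascamp--Lieb inequality on $G/N$ to the $F_j$ then gives \eqref{break}.

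The main obstacle is technical. The inequality defining $\BL(G/N,\bsds,\bsp)$ is stated only for $C_c$ inputs, and $F_j$ is continuous with compact support only when $p_j<\infty$; for $p_j=\infty$ it is merely upper semicontinuous. We will handle this by extending the inequality on $G/N$ to nonnegative lower or upper semicontinuous compactly supported functions, or to bounded Borel functions with compact support. This uses monotone approximation together with regularity of Haar measure: approximate $F_j$ from above by $C_c$ functions whose $L^{p_j}$ norms converge, and use monotone convergence on the left-hand side. We must also check measurability of $\dot x\mapsto$ the inner integral; this follows from the quotient integral formula, via Fubini on $C_c$ functions. Finally, we must check that the normalisations of Haar measures on $\sigma_j(N)$ and on $G_j/\sigma_j(N)$ are exactly those of Theorem \ref{thm:quotient-integral-formula}, which is what makes the norm identity for $F_j$ exact.
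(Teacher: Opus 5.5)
Your proposal is correct and follows the paper's proof essentially verbatim. You disintegrate $G$ over $G/N$ by the quotient integral formula, apply the inequality on $N$ fibrewise to the translates $y_j \mapsto f_j(\sigma_j(x)y_j)$, then apply the inequality on $G/N$ to the fibre norms $F_j$ and reassemble them with the quotient integral formula on each $G_j$. Your technical caveats go beyond what the paper checks, and the one about $p_j=\infty$ is unnecessary: $\dot z\mapsto \sup_{n\in\sigma_j(N)}|f_j(zn)|$ is in fact continuous with compact support, because for $z$ in a compact set the supremum runs over a fixed compact subset of $\sigma_j(N)$.
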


\begin{proof}
If the right hand side of \eqref{break} is infinite, there is nothing to prove.
If it is finite, then both factors are finite, which implies that both $\bss|_N$ and $\bsds$ are proper, so $\bss$ is proper.

Take $f_j\in C_c(G_j)$.
From the quotient integral formula, 
\allowdisplaybreaks
\begin{align*}
&\int_{G} \prod_{j} f_j ( \sigma_j(x) )\wrt x\\
&= \int_{G/N}\int_{N} \prod_{j} f_j( \sigma_j(xy) ) \wrt y \wrt {\dot x} \\
&\leq\BL(N, \bss|_{N}, \bsp) \int_{G/N} \prod_{j}
\biggl( \int_{\sigma_j(N)} |f_j( \sigma_j(x) y_j ) |^{p_j} \wrt y_j \biggr) ^{1/p_j} \wrt {\dot x}  \\
&\leq \BL(N, \bss|_{N}, \bsp) \BL(G/N, \dot{\bss}, \bsp)
\prod_{j}
\biggl( \int_{G_j/\sigma_j(N)} \int_{\sigma_j(N)} |f_j( x_j y_j )|^{p_j} \wrt y_j  \,\wrt {\dot x}_j \biggr) ^{1/p_j} \\
&= \BL(N, \bss|_{N}, \bsp) \BL(G/N, \dot{\bss}, \bsp)
\prod_{j} \biggl( \int_{G_j} |f_j( x_j )|^{p_j} \wrt x_j \biggr) ^{1/p_j} ,
\end{align*}
which proves \eqref{break}.
On the third line, we applied the Brascamp--Lieb inequality with homo\-mor\-phisms $\sigma_j|_{N}$ and inputs $y_j \mapsto f_j(\sigma_j(x) y_j)$ on $\sigma_j(N)$, on the fourth line, we applied the Brascamp--Lieb inequality with homomorphisms $\dot\sigma_j$ and inputs
\[
\dot x_j \mapsto \biggl(\int_{\sigma_j(H)} |f_j(x_j y_j)|^{p_j} \wrt y_j\biggr)^{1/p_j}
\]
on $G_j/\sigma_j(N)$, and on the last line, we used the quotient integral formula.
\end{proof}

\begin{cor}\label{cor:breaking}
Suppose  that  $(G^1, \bss^1,\bsp)$ and $(G^2, \bss^2,\bsp)$ are Brascamp--Lieb data of locally compact groups.
Then, when the Haar measures of $G^1 \times G^1$ and of each $G^1_j \times G^2_j$  are the products of the Haar measure of the relevant factors, 
\begin{align}\label{break-product}
\BL(G^1\times G^2 , \bss^1 \otimes \bss^2, \bsp) =  \BL(G^1, \bss^1, \bsp) \BL(G^2, \bss^2, \bsp).       
\end{align}
\end{cor}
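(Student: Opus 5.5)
The plan is to prove the two inequalities in \eqref{break-product} separately. The upper bound will come from Theorem \ref{thm:breaking}, and the lower bound from testing on tensor products.

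For ``$\leq$'', set $G = G^1\times G^2$, $\sigma_j = \sigma^1_j\otimes\sigma^2_j$ and $N = G^1\times\{e\}$. Then $N$ is closed and normal in $G$, and
\[
\sigma_j(N) = \sigma^1_j(G^1)\times\{e\}.
\]
If the right hand side of \eqref{break-product} is infinite there is nothing to prove. Otherwise both factors are finite, so by Lemma \ref{lem:bss-must-be-proper} (with the standing convention that the $\sigma_j$ are open) each $\sigma^1_j$ is open. We may therefore take $\sigma^1_j$ surjective, or at least with closed image, so that $\sigma_j(N)$ is closed. Theorem \ref{thm:breaking} then gives
\[
\BL(G,\bss,\bsp)\le \BL(N,\bss|_N,\bsp)\,\BL(G/N,\bsds,\bsp).
\]
Under the product normalisation, $N\cong G^1$ and $G/N\cong G^2$ carry exactly the Haar measures required by the quotient integral formula, and likewise for $G^1_j\times G^2_j$. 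The restricted datum is $(G^1,\bss^1,\bsp)$ and the quotient datum is $(G^2,\bss^2,\bsp)$.

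Alternatively, and avoiding the closedness issue entirely, I would simply rerun the proof of Theorem \ref{thm:breaking} directly with Fubini. First apply the $G^1$ inequality in $x_1$ for each fixed $x_2$, to the functions $y\mapsto f_j(y,\sigma^2_j(x_2))$. Then apply the $G^2$ inequality to the functions
\[
z\mapsto \norm{f_j(\cdot,z)}_{L^{p_j}(G^1_j)}.
\]
Finally use Fubini to recombine these into $\norm{f_j}_{L^{p_j}}$. This direct version is what I will write, since it needs no hypotheses beyond measurability of the partial norms, which is clear for $f_j\in C_c$.

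For ``$\geq$'', test on $f_j = g_j\otimes h_j$ with $g_j\in C_c(G^1_j)$ and $h_j\in C_c(G^2_j)$. The left hand side of \eqref{eq:usual} factors as
\[
\int_{G^1}\prod_j g_j\circ\sigma^1_j\;\int_{G^2}\prod_j h_j\circ\sigma^2_j ,
\]
and $\norm{f_j}_{L^{p_j}} = \norm{g_j}_{L^{p_j}}\norm{h_j}_{L^{p_j}}$. Taking suprema over the $g_j$ and $h_j$ separately gives the product of the two constants.

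Some care is needed with infinite values. If one factor is infinite and the other is $0$, the convention is unclear, but a nonzero constant is always obtained by taking nonnegative nonzero inputs with positive integral. To justify this, note that $\int_{G^i}\prod_j g_j\circ\sigma^i_j>0$ can be arranged by choosing each $g_j$ positive near the identity of $G^i_j$, since then the integrand is positive near $e$ by continuity. So each constant is positive, and the product of an infinite constant with a positive one is infinite. Restricting to nonnegative inputs loses nothing, since it only replaces the left hand side by a smaller quantity.

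The main obstacle is this degenerate bookkeeping, namely the infinite and zero cases, together with the measurability needed for Fubini in the ``$\leq$'' direction. The core computation itself is routine.
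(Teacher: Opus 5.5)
Your proposal is correct and is essentially the paper's proof. The upper bound comes from Theorem \ref{thm:breaking}, which you rerun directly via Fubini; this sidesteps the closed-image hypothesis and the identification of the restricted and quotient data that the paper's one-line citation glosses over. The lower bound comes from testing on tensor products of (near-)extremisers, and your treatment of the infinite and positive cases fills in bookkeeping the paper leaves implicit.
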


\begin{proof}
The previous theorem implies that 
\begin{align*}
\BL(G^1\times G^2 , \bss^1 \otimes \bss^2, \bsp) 
\leq  \BL(G^1, \bss^1, \bsp) \BL(G^2, \bss^2, \bsp).       
\end{align*}
The opposite inequality follows by testing on tensor products of near extremisers for the two factors.
\end{proof}

Next we consider restriction to open subgroups and factoring out compact subgroups.
The next two lemmas are  proved for locally compact abelian groups in \cite{BC25}; the arguments do not rely on commutativity, and so work in our more general context.

\begin{lem}\label{lem:BL-open-sbgp}
Suppose that $(G, \bss,\bsp)$ is a  Brascamp--Lieb datum, $G_0$ is an open subgroup of $G$, and $H_j$ is an open subgroup of $G_j$ such that $\sigma_j(G_0) \subseteq H_j$ for each $j$.
Then the induced maps ${\sigma}^\circ_j\colon  G_0 \to H_j$, defined to be ${\sigma_j}|_{G_0}$ with codomain $H_j$, are homomorphisms. 
Further, when $G_0$ and $H_j$ are equipped with the standard open subgroup Haar measures,
\[
\BL(G_0, \bss^\circ, \bsp)
\leq \BL(G, \bss, \bsp).
\]
The inequalities are equalities when $G_0 = G$.
\end{lem}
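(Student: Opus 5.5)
The plan is to prove the inequality directly, by extending functions on the $H_j$ by zero to all of $G_j$ and then controlling the extra part of the integral over $G$.

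Take $f_j \in C_c(H_j)$. Since $H_j$ is open in $G_j$, and hence also closed, the extension $\tilde f_j$ of $f_j$ by zero lies in $C_c(G_j)$. Under the standard open subgroup normalisation, $\norm{\tilde f_j}_{L^{p_j}(G_j)} = \norm{f_j}_{L^{p_j}(H_j)}$.

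We cannot simply compare $\int_{G_0}\prod_j f_j\circ\sigma_j$ with $\int_G \prod_j \tilde f_j\circ\sigma_j$. The latter also picks up points $x \notin G_0$ at which every $\sigma_j(x)$ lies in $H_j$, and for signed or complex inputs these contributions could cancel. I would handle this as follows.

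First, reduce to nonnegative inputs. Since $|\int_{G_0}\prod_j f_j\circ\sigma_j| \le \int_{G_0}\prod_j |f_j|\circ\sigma_j$ and $\norm{|f_j|}_{p_j}=\norm{f_j}_{p_j}$, it suffices to treat $f_j \ge 0$.

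Second, compare the two integrals for nonnegative inputs. The set $\{x\in G: \sigma_j(x)\in H_j \ \forall j\} = \bigcap_j \sigma_j^{-1}(H_j)$ is an open subgroup $G_1$ of $G$ containing $G_0$. Because all functions are nonnegative,
\[
\int_{G_0}\prod_j f_j(\sigma_j(x))\wrt x \le \int_{G_1}\prod_j f_j(\sigma_j(x))\wrt x = \int_G \prod_j \tilde f_j(\sigma_j(x))\wrt x \le \BL(G,\bss,\bsp)\prod_j \norm{f_j}_{L^{p_j}(H_j)} .
\]
Here the Haar measure on $G_0$ is the restriction of that on $G$, as the standard open subgroup normalisation dictates. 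This gives $\BL(G_0,\bss^\circ,\bsp)\le \BL(G,\bss,\bsp)$.

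Two routine points remain. The maps $\sigma_j^\circ$ are continuous homomorphisms into $H_j$, because $H_j$ carries the relative topology. Openness of the $\sigma_j^\circ$ is not needed for the inequality itself; it follows from Lemma \ref{lem:good-or-bad} if it is wanted.

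The equality statement for $G_0=G$ is less trivial than the inequality. The inequality $\le$ has just been shown. For $\ge$, take $f_j\in C_c(G_j)$. Since $\sigma_j(G)\subseteq H_j$, each $f_j\circ\sigma_j$ depends only on $f_j|_{H_j}$, and $\norm{f_j|_{H_j}}_{L^{p_j}(H_j)}\le\norm{f_j}_{L^{p_j}(G_j)}$. Hence
\[
\Bigl|\int_G\prod_j f_j\circ\sigma_j\Bigr|\le \BL(G,\bss^\circ,\bsp)\prod_j\norm{f_j}_{L^{p_j}(G_j)} .
\]
Restrictions of $C_c$ functions to open and closed subgroups are again in $C_c$, so this is legitimate and gives equality.

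I expect the only subtle point to be the cancellation issue in the comparison step, which the reduction to nonnegative inputs settles.
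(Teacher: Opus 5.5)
Your proof is correct and complete. Extending by zero and first reducing to nonnegative inputs means that enlarging $G_0$ to the open subgroup $\bigcap_j \sigma_j^{-1}(H_j)$ can only increase the integral, which gives the inequality; restricting inputs to the $H_j$ gives the reverse inequality when $G_0 = G$. The paper gives no separate proof. It only points to the abelian argument of Bennett and Cowling and notes that it does not use commutativity, and your argument is a commutativity-free proof of exactly this kind.
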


\begin{lem}\label{lem:BL-cpct-qtnt}
Suppose that $(G, \bss,\bsp)$ is a Brascamp--Lieb datum, and that $N$ is a compact normal subgroup of $G$ such that each $\sigma_j(N)$ is a compact normal subgroup of $G_j$.
Then the induced quotient mappings $\dot\sigma_j\colon  G/N \to G_j/\sigma_j(N)$ are homomorphisms,
and $\bsds$ is proper if $\bss$ is proper.
Further, when the Haar measures on $G/N$ and $G_j/\sigma_j(N)$ are given the standard compact quotient normalisations,
\[
\BL(G/N, \dot{\bss}, \bsp) \leq \BL(G, \bss, \bsp).
\]
If $N \subseteq \ker(\sigma_j)$ for all $j$, then equality holds.
\end{lem}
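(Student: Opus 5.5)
The statement to prove is Lemma \ref{lem:BL-cpct-qtnt}. The plan is to transfer functions on the quotients $G_j/\sigma_j(N)$ to $N$-invariant functions on $G_j$, and to compare both sides of \eqref{eq:usual} through the quotient integral formula with the standard compact quotient normalisations.

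First I would check that the quotient maps are well defined. Since $\sigma_j(N)$ is a compact normal subgroup of $G_j$, the formula $\dot\sigma_j(xN)=\sigma_j(x)\sigma_j(N)$ is well defined, because $\sigma_j(xn)=\sigma_j(x)\sigma_j(n)$. It is a homomorphism, and it is continuous because the quotient maps are open and continuous. If $\bss$ is proper, then Proposition \ref{prop:new-homos-from-old}, applied to $\bss\colon G\to\bsG$ with the compact, hence closed, normal subgroup $N$, gives that $\bsds$ is proper. Here the closure of $\bss(N)$ is $\bss(N)$, which is compact and is normal in $\bss(G)$. To land in $\prod_j G_j/\sigma_j(N)$ rather than $\bsG/\bss(N)$, I would compose with the natural continuous map $\bsG/\bss(N)\to\prod_j G_j/\sigma_j(N)$. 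This map has compact fibres, since $\prod_j\sigma_j(N)/\bss(N)$ is compact, so properness is preserved.

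For the inequality, take $g_j\in C_c(G_j/\sigma_j(N))$ and set $f_j=g_j\circ q_j$, where $q_j\colon G_j\to G_j/\sigma_j(N)$ is the quotient map. Then $f_j\in C_c(G_j)$, because $\sigma_j(N)$ is compact and so $q_j$ is proper. With the normalisation $|\sigma_j(N)|=1$, the quotient integral formula gives $\|f_j\|_{L^{p_j}(G_j)}=\|g_j\|_{L^{p_j}(G_j/\sigma_j(N))}$. On the other side, $\prod_j f_j(\sigma_j(x))=\prod_j g_j(\dot\sigma_j(xN))$ is constant on cosets of $N$. So with $|N|=1$,
\[
\int_G\prod_j f_j\circ\sigma_j=\int_{G/N}\prod_j g_j\circ\dot\sigma_j .
\]
Taking the supremum over all such inputs gives $\BL(G/N,\bsds,\bsp)\le\BL(G,\bss,\bsp)$.

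For equality when $N\subseteq\ker(\sigma_j)$ for all $j$, the subgroups $\sigma_j(N)$ are trivial. Then $G_j/\sigma_j(N)=G_j$, with the same measure under the normalisation $|\{e\}|=1$, and $\dot\sigma_j$ is simply the factorisation of $\sigma_j$. For arbitrary $f_j\in C_c(G_j)$, the integrand $\prod_j f_j\circ\sigma_j$ is therefore $N$-invariant, and the same computation shows that the two forms coincide. This gives the reverse inequality. The subtle points are the consistency of normalisations when $N$ is also open (the compact normalisation is the one intended) and the properness of $\bsds$. I expect the properness step to be the main obstacle, and I would handle it directly if necessary: a compact set in $\prod_j G_j/\sigma_j(N)$ lifts to a compact set in $\bsG$, whose preimage under $\bss$ is compact and maps onto the required preimage in $G/N$.
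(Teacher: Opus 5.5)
Your proof is correct and follows the route the paper intends; the paper gives no proof of its own and defers to the abelian argument of \cite{BC25}. That argument pulls inputs on $G_j/\sigma_j(N)$ back to $\sigma_j(N)$-invariant functions on $G_j$, matches both sides with the quotient integral formula under $|N|=|\sigma_j(N)|=1$, and in the equality case notes that when $N\subseteq\ker(\sigma_j)$ every input is already invariant.

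The one step to tidy is properness. Proposition~\ref{prop:new-homos-from-old} requires $\bss(N)$ to be normal in $\bsG$, and this can fail for nonabelian groups: take the diagonal copy of $SU(2)$ in $SU(2)\times SU(2)$. Make your direct argument the main proof instead, namely $\bsds^{-1}(C)=q\bigl(\bss^{-1}(Q^{-1}(C))\bigr)$. Here $q\colon G\to G/N$, and $Q\colon\bsG\to\prod_j G_j/\sigma_j(N)$ is proper because $\prod_j\sigma_j(N)$ is compact.
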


With the aid of these two lemmas, we are able to reduce the problem of finding the Brascamp--Lieb constant  to that of finding the constant with canonical data, which we now define.

\begin{defn}
A canonical Brascamp--Lieb datum is a Brascamp--Lieb datum $(G, \bss,\bsp)$ where $G$ is a closed subgroup of the product group $\bsG \coloneqq G_1\times\dots \times G_J$ and each $\sigma_j$ is the restriction to $G$ of the canonical projection of $\bsG$ onto $G_j$.
\end{defn}

\begin{lem}
Suppose that $(G, \bss, \bsp)$ is a Brascamp--Lieb datum such that $\BL(G, \bss, \bsp)$ is finite, and let $N = \ker(\bss)$ and $G_j^\circ = \sigma_j(G)$.
Then $N$ is compact and each $G_j^\circ$ is open in $G_j$.  
The homomorphisms $\sigma_j$ induce homomorphisms $\tilde\sigma_j\colon  G/N \to G_j^\circ$, given by $\tilde\sigma_j(xN) = \sigma_j(x)$, and $\bsts(G)$ is a closed subgroup of $\bsGc$.
Further, $\tilde\sigma_j$ is the restriction to $G/N$ of the canonical projection $\pi_j$ of $\bsGc \coloneqq  G_1^\circ \times \dots \times G_J^\circ$ to $G_j^\circ$.
Finally, 
\[
\BL(G,\bss,\bsp) = \BL(G/N, \bsts, \bsp).
\]
\end{lem}

\begin{proof}
By Proposition \ref{prop:new-homos-from-old}, $\bsts$ is proper, so $\bss(G)$ is closed in $\bsG$ and $\bss$ is a homeomorphic isomorphism from $G$ to $\bss(G)$, equipped with the relative topology.  
Thus we may identify $G$ with $\bss(G)$, and we have a canonical datum.

The equality of the constants follows from Lemmas \ref{lem:BL-open-sbgp} and \ref{lem:BL-cpct-qtnt}.
\end{proof}

Now we point out various simplifications for canonical Brascamp--Lieb data.

First, we may and shall henceforth assume that $J > 1$.
Indeed, if $J=1$, then the homomorphism $\sigma_1$ is an isomorphism, and isomorphisms preserve Haar measures, up to a scaling factor.
Hence the case where $J=1$ is trivial.

Next we consider the possibility that one or more of the $p_j$ is equal to $\infty$ or $1$.

\begin{prop}\label{prop:pk=infty}
Suppose that $1 \leq k \leq J$ and $p_k = \infty$.
Denote by $\bsts$ and $\bstp$ the collection of homomorphisms $\sigma_1, \dots, \sigma_J$ with $\sigma_k$ omitted and the collection of indices $p_1, \dots, p_J$ with $p_k$ omitted.
Then
\[
\BL(G, \bss, \bsp)
= \BL(G, \bsts, \bstp) .
\]
\end{prop}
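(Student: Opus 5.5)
The plan is to prove the two inequalities $\BL(G,\bss,\bsp)\le \BL(G,\bsts,\bstp)$ and $\BL(G,\bsts,\bstp)\le\BL(G,\bss,\bsp)$ separately, using only the definition \eqref{eq:usual}. The first step is a reduction to nonnegative inputs. Since $\bigl|\int_G \prod_j f_j\circ\sigma_j\bigr| \le \int_G \prod_j |f_j|\circ\sigma_j$, and $|f_j|\in C_c(G_j)$ with the same $L^{p_j}$ norm, both constants may be computed using nonnegative $f_j\in C_c(G_j)$ only.

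For the inequality $\BL(G,\bss,\bsp)\le \BL(G,\bsts,\bstp)$, I take nonnegative $f_j\in C_c(G_j)$ for all $j$. Because $f_k$ is continuous and every nonempty open subset of $G_k$ has positive Haar measure, $\sup f_k = \norm{f_k}_{L^\infty(G_k)}$. Hence $f_k\circ\sigma_k\le \norm{f_k}_\infty$ pointwise on $G$, and
\[
\int_G \prod_{j} f_j\circ\sigma_j \le \norm{f_k}_{L^\infty(G_k)} \int_G \prod_{j\ne k} f_j\circ\sigma_j \le \BL(G,\bsts,\bstp)\prod_j \norm{f_j}_{L^{p_j}(G_j)} .
\]
This holds trivially if the right hand constant is infinite, and the estimate also gives the required integrability.

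For the reverse inequality, I fix nonnegative $f_j\in C_c(G_j)$ for $j\ne k$ and set $F=\prod_{j\ne k} f_j\circ\sigma_j$, a nonnegative continuous function on $G$. By inner regularity of Haar measure (equivalently, the fact that the integral of a nonnegative continuous function is the supremum of the integrals of $F\chi_K$ over compact $K$), I have $\int_G F=\sup_K \int_K F$. For a given compact $K\subseteq G$, the set $\sigma_k(K)$ is compact, so Urysohn's lemma gives $f_k\in C_c(G_k)$ with $0\le f_k\le 1$, $f_k=1$ on $\sigma_k(K)$, and $\norm{f_k}_\infty\le 1$. Then
\[
\int_K F\le \int_G \prod_j f_j\circ\sigma_j \le \BL(G,\bss,\bsp)\prod_{j\ne k}\norm{f_j}_{L^{p_j}(G_j)} .
\]
Taking the supremum over $K$ gives both the integrability of $F$ and the bound with constant $\BL(G,\bss,\bsp)$.

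The only delicate point is this approximation of the constant function $1$ on $G_k$. Since $G$ need not be $\sigma$-compact, I avoid sequences and monotone convergence and instead use the supremum over compact sets, which is justified for continuous nonnegative integrands by the regularity of Haar measure. Combining the two inequalities gives the equality of the two constants, including the case where both are infinite.
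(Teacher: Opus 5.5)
Your proof is correct and follows the paper's route. For $\BL(G,\bss,\bsp)\le\BL(G,\bsts,\bstp)$ both use the pointwise bound $f_k\le\norm{f_k}_{L^\infty(G_k)}$; for the reverse inequality both test with $0\le f_k\le 1$ equal to $1$ on large compacta. The paper does this with a sequence $(u_n)$ converging to $1$ uniformly on compacta, whereas your Urysohn functions indexed by compact $K\subseteq G$, together with $\int_G F=\sup_K\int_K F$ for continuous $F\ge 0$, avoid any implicit $\sigma$-compactness assumption.
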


\begin{proof}
The inequalities 
\[
\BL(G, \bsts, \bstp) \leq \BL(G, \bss, \bsp)
\] 
are easy.  
Equality relies on the existence of sequences $(u_n)$ in $S(G_k)$ that converge to $1$ uniformly on compacta and satisfy $\norm{u_n}_{L^\infty(G_k)} \leq 1$.
\end{proof}

\begin{prop}\label{prop:pk=1}
Suppose that $1 \leq k \leq J$ and $p_k = 1$. 
Denote $\ker(\sigma_k)$ by $N$, by $\bsrho$ the collection of restricted homomorphisms $\sigma_1|_{N}, \dots, \sigma_J|_{N}$ with codomains $\sigma_j(N)$ and with $\sigma_k|_{N}$ omitted, and by $\bsq$ the collection of indices $p_1, \dots, p_J$  with $p_k$ omitted.
Then $\BL(G, \bss, \bsp)$ is finite if and only if $\sigma_j(N)$ is open in $G_j$ for all $j$ other than $k$ and $\BL(N, \bsrho, \bsq)$ is finite.

Moreover, if $\BL(G, \bss, \bsp)$ is finite, the Haar measures on $G$, $N$, and $G/N$ are such that the quotient integral formula \eqref{eq:quotient-integral-formula} holds, the Haar measures on $G_j$, $\sigma_j(N)$, and $G_j/\sigma_j(N)$ have the standard open normalisations when $j \neq k$ and the Haar measure on $G_k$ is the push-forward of the Haar measure on $G/N$, then
\[
\BL(G, \bss, \bsp)
= \BL(N, \bsrho, \bsq).
\]
\end{prop}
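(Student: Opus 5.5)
The plan is to apply the quotient integral formula with respect to $N = \ker(\sigma_k)$, using the fact that when $p_k = 1$ the factor $f_k\circ\sigma_k$ is constant on cosets of $N$. Since all $\sigma_j$ are open (by the standing assumption after Lemma \ref{lem:bss-must-be-proper}), $\sigma_k$ induces a homeomorphic isomorphism of $G/N$ onto $G_k$. Transporting Haar measure accordingly, as in the statement, we have $\int_{G/N} |g(\sigma_k(x))| \wrt\dot x = \norm{g}_{L^1(G_k)}$. For $f_j \in C_c(G_j)$, Theorem \ref{thm:quotient-integral-formula} gives
\[
\int_G \prod_j f_j(\sigma_j(x)) \wrt x = \int_{G/N} f_k(\sigma_k(x)) \int_N \prod_{j\neq k} f_j(\sigma_j(x)\sigma_j(y)) \wrt y \wrt \dot x .
\]

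\emph{Sufficiency and the upper bound.} Assume each $\sigma_j(N)$ is open in $G_j$ for $j \ne k$, and that $\BL(N,\bsrho,\bsq)<\infty$. For fixed $x$, apply the inequality on $N$ to the translated functions $y_j \mapsto f_j(\sigma_j(x)y_j)$, restricted to $\sigma_j(N)$. Each such restriction has $L^{p_j}(\sigma_j(N))$ norm at most $\norm{f_j}_{L^{p_j}(G_j)}$, because $\sigma_j(N)$ is open and carries the restricted Haar measure. This bounds the inner integral by $\BL(N,\bsrho,\bsq)\prod_{j\ne k}\norm{f_j}_{p_j}$, uniformly in $x$. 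Integrating $|f_k\circ\sigma_k|$ over $G/N$ then contributes $\norm{f_k}_1$, which gives $\BL(G,\bss,\bsp)\le \BL(N,\bsrho,\bsq)$.

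\emph{Necessity and the lower bound.} Assume $\BL(G,\bss,\bsp)<\infty$.
\begin{itemize}
\item[(a)] First I show that each $\sigma_j(N)$ must be open. If $\sigma_j(N)$ were not open, then by Lemma \ref{lem:good-or-bad} (applied to $\sigma_j|_N$ into $G_j$) the images of compact subsets of $N$ would be null. Take $f_k$ concentrated near $e$, with $L^1$ norm $1$, and take $f_j$ to be indicators of small neighbourhoods $V$ of $\sigma_j(K)$, for $K$ a compact neighbourhood of $e$ in $N$. The left-hand side stays of size about $|K|_N$, while $\norm{f_j}_{p_j}$ can be made arbitrarily small by outer regularity, since $|\sigma_j(K)|=0$. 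This contradicts finiteness. (If $p_j=\infty$, first discard that index via Proposition \ref{prop:pk=infty}.)
\item[(b)] Next I prove $\BL(N,\bsrho,\bsq)\le\BL(G,\bss,\bsp)$. Given $g_j\in C_c(\sigma_j(N))$ for $j\ne k$, extend them by zero to $G_j$; this is legitimate because $\sigma_j(N)$ is open, hence also closed. Take $f_k = |U_n|^{-1}1_{U_n}$, where $U_n$ shrinks to $e$ in $G_k$. Then the right-hand side of the $G$ inequality is $\BL(G,\bss,\bsp)\prod_{j\ne k}\norm{g_j}_{p_j}$. On the left, the inner integral $x\mapsto \int_N \prod_{j\ne k} g_j(\sigma_j(x)\sigma_j(y))\wrt y$ is continuous at cosets near $N$: this follows from uniform continuity of the $g_j$ and compactness of the relevant part of $N$, the latter using properness of $\bss$ (Lemma \ref{lem:bss-must-be-proper}). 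So averaging over $\sigma_k^{-1}(U_n)/N$ converges to $\int_N\prod_{j\ne k} g_j(\sigma_j(y))\wrt y$.
\end{itemize}

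I expect the main obstacle to be this continuity and approximation step in (b), together with the bookkeeping of Haar normalisations on open subgroups. I would handle it by reducing to $g_j\ge0$ and choosing the neighbourhoods $U_n$ inside a fixed compact set, so that dominated convergence applies.
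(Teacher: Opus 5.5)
Your proposal is correct and follows the paper's proof essentially step for step: the quotient integral formula over $N=\ker(\sigma_k)$, an approximate identity $f_k$ at $e$ (with properness of $\bss$ supplying the needed compactness) to pass to an inequality on $N$, the null-image alternative of Lemma~\ref{lem:good-or-bad} to force each $\sigma_j(N)$ to be open, and, for the converse, the inequality on $N$ applied to the restricted translates $f_j(\sigma_j(x)\,\cdot\,)|_{\sigma_j(N)}$ uniformly in $x$. The only difference is ordering: the paper first derives the translated inequality on $N$ for all $f_j\in C_c(G_j)$ and reads off openness from it, whereas you get openness directly from the inequality on $G$.

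One caveat, shared with the paper, is that the openness step genuinely needs $p_j<\infty$. Discarding such indices via Proposition~\ref{prop:pk=infty} preserves the constant but cannot produce openness: for the diagonal $G\subseteq\R\times\R$ with $\bsp=(1,\infty)$ and $k=1$, the constant is finite while $\sigma_2(N)=\{0\}$ is not open. So the equivalence should be read with $p_j<\infty$ for $j\neq k$.
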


\begin{proof}
We assume throughout this proof that the quotient integral formula \eqref{eq:quotient-integral-formula} holds for $G$, $N$ and $G/N$.

Suppose that $C \coloneqq  \BL(G, \bss, \bsp)$ is finite.
Take $z \in G$, and let $f_k$ be an element of an approximation to the delta distribution at $\sigma_k(z)$ in $C_c(G_k)$; for $j \neq k$, take $f_j \in C_c(G_k)$.
Since $\bss$ is proper, $\bigcap_j \sigma_j^{-1}(\supp(f_j))$ is a compact subset of $G$.
Then \eqref{eq:quotient-integral-formula} shows that
\begin{equation}\label{eq:L1-reduction-1}
\begin{aligned}
\int_{G} \prod_{j} f_j(\sigma_j(x)) \wrt{x}
&= \int_{G/N} f_k(\sigma_k(x)) \int_{N} \prod_{j\neq k} f_j(\sigma_j(xy)) \wrt{y} \wrt{\dot{x}} \\
&\to \int_{N} \prod_{j\neq k} f_j(\sigma_j(z) \rho_j(y)) \wrt{y} ,
\end{aligned}
\end{equation}
whence
\begin{equation}\label{eq:L1-reduction-2}
\bigglabs \int_{N} \prod_{j\neq k} f_j(\sigma_j(z)\rho_j(y)) \wrt{y} \biggrabs
\leq C \prod_{j\neq k} \norm{ f_j }_{L^{p_j}(G_j)} 
\end{equation}
for all $f_j \in C_c(G_j)$.
This holds for all $z \in G$, and in particular when $z = e$, and so 
\begin{equation}\label{eq:L1-reduction-3}
\bigglabs \int_{N} \prod_{j\neq k} f_j(\rho_j(y)) \wrt{y} \biggrabs
\leq C \prod_{j\neq k} \norm{ f_j }_{L^{p_j}(G_j)} 
\qquad\forall f_j \in C_c(G_j).
\end{equation}

Let $U$ be a relatively compact open set in $N$, and for $j$ other than $k$, choose $f_j$ that take the value $1$ on the relatively compact set $\sigma_j(U)$.
Then the left hand side of \eqref{eq:L1-reduction-3} is bounded below by $|U|$.
If $\sigma_j(U)$ were a null set in $G_j$ for some $j$ other than $k$, then we could choose $f_j \in C_c(G_j)$, equal to $1$ on $\sigma_j(U)$, with arbitrarily small ${L^{p_j}(G_j)}$ norm, and contradict \eqref{eq:L1-reduction-3}.
Then no $\sigma_j(U)$ is a null set, so by Lemma \ref{lem:good-or-bad},  each $\sigma_j(N)$ is an open subgroup of $G_j$. 
We take the standard open normalisation for the Haar measures of $G_j$, $\sigma_j(N)$ and $G_j/\sigma_j(N)$ and consider $f_j$ supported in $\sigma_j(N)$. 
It follows that
\[
\bigglabs \int_{N} \prod_{j\neq k} f_j(\rho_j(y)) \wrt{y} \biggrabs
\leq C \prod_{j\neq k} \norm{ f_j }_{L^{p_j}(\sigma_j(N))} 
\qquad\forall f_j \in C_c(\sigma_j(N)),
\]
so $\BL(N, \bsrho, \bsq) \leq C = \BL(G, \bss, \bsp)$.

Conversely, suppose that $\sigma_j(N)$ is open in $G_j$ for all $j$ other than $k$ and that the Haar measures are normalised as enunciated.
Suppose also that $C \coloneqq  \BL(N, \bsrho, \bsq)$ is finite.
Then \eqref{eq:L1-reduction-3} holds by definition, so \eqref{eq:L1-reduction-2} holds, by the translation invariance of Haar measure, and 
\[
\begin{aligned}
\bigglabs\int_{G} \prod_{j} f_j(\sigma_j(x)) \wrt{x} \biggrabs
&\leq \int_{G/N} |f_k(\sigma_k(x))| \bigglabs \int_{N} \prod_{j\neq k} f_j(\sigma_j(xy)) \wrt{y} \biggrabs \wrt{\dot{x}} \\
&\leq C \norm{f_k}_{L^1(G_k)} \prod_{j\neq k} \norm{ f_j (\sigma_j(x) \cdot)|_{\sigma_j(N)} }_{L^{p_j}(\sigma_j(N))}\\
&\leq C \norm{f_k}_{L^1(G_k)} \prod_{j\neq k} \norm{ f_j }_{L^{p_j}(G_j)},
\end{aligned}
\]
and so $\BL(G,\bss,\bsp) \leq C = \BL(N,\bsrho,\bsq)$.
\end{proof}

Finally, we introduce the local Brascamp--Lieb constant and the Brascamp--Lieb constant for the derivative of $\bss$.

\begin{defn}
Let $(G, \bss ,\bsp )$ be a canonical Brascamp--Lieb datum of locally compact groups, and 
take $\bsU$ to be $U_1 \times \dots \times U_J$, where each $U_j$ is a relatively compact neighbourhood of the identity in $G_j$. 
Define $\BL(G,\bss ,\bsp ;\bsU)$ to be the best constant in the associated Brascamp--Lieb inequality \eqref{eq:usual} with the additional support constraint $\supp(f_j)\subseteq U_j$ for each $j$, and define the \emph{local Brascamp--Lieb constant} $\BL^{\loc}(G,\bss ,\bsp)$ to be the infimum of the constants $\BL(G,\bss ,\bsp ;\bsU)$ over all relatively compact product open sets $\bsU$.
\end{defn}

It is evident that if $\bsU \subseteq \bsV$, then $\BL(G,\bss,\bsp; \bsU) \leq \BL(G,\bss,\bsp; \bsV)$.
Thus the infimum corresponds to a limit as the sets $\bsU$ shrink towards $\{e\}$, that is, we are considering the limit along the net of open subsets of $\bsG$ ordered by reverse inclusion.

\begin{prop} \label{prop:trivial-estimate}
Suppose that  $(G, \bss ,\bsp )$ is a canonical Brascamp--Lieb datum, where $G$ is a locally compact unimodular group.
Then
\[
\BL(G,\bss ,\bsp) \geq \BL^{\loc}(G,\bss ,\bsp) .
\]
\end{prop}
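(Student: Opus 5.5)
The inequality follows directly from the definitions, because restricting the class of test functions can only decrease the best constant. I will compare the constants $\BL(G,\bss,\bsp;\bsU)$ with $\BL(G,\bss,\bsp)$ for each fixed $\bsU$, and then pass to the infimum.

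First I fix a product set $\bsU = U_1 \times \dots \times U_J$, where each $U_j$ is a relatively compact open neighbourhood of the identity in $G_j$. Any input $\bsf$ with $f_j \in C_c(G_j)$ and $\supp(f_j) \subseteq U_j$ is in particular an admissible input for \eqref{eq:usual}. Hence, if $\BL(G,\bss,\bsp)$ is finite,
\[
\bigglabs \int_G \prod_j (f_j\circ\sigma_j)(x) \wrt x \biggrabs \leq \BL(G,\bss,\bsp) \prod_j \norm{f_j}_{L^{p_j}(G_j)}
\]
for all such restricted inputs. Taking the best constant over this smaller class gives $\BL(G,\bss,\bsp;\bsU) \leq \BL(G,\bss,\bsp)$. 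If $\BL(G,\bss,\bsp) = \infty$, the statement is trivially true.

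Next, I take the infimum over all such $\bsU$. The bound from the first step is uniform in $\bsU$, so
\[
\BL^{\loc}(G,\bss,\bsp) = \inf_{\bsU} \BL(G,\bss,\bsp;\bsU) \leq \BL(G,\bss,\bsp).
\]
This is the claim. The monotonicity remark made just before the proposition shows that the infimum is the limit along the net of shrinking neighbourhoods, but this fact is not needed here.

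I do not expect any real obstacle. The argument does not use unimodularity; that hypothesis presumably matters only for related statements, such as translating the support constraint to neighbourhoods of arbitrary points $\sigma_j(z)$. If that version were wanted, I would substitute $x \mapsto zx$ in the integral on $G$. Left invariance of Haar measure on $G$ and on each $G_j$ then shows that the constant for supports near $\sigma_j(z)$ equals $\BL(G,\bss,\bsp;\bsU)$, since the left translate $f_j(\sigma_j(z)\,\cdot)$ has the same $L^{p_j}$ norm. For right-handed variants one would need right invariance, and this is where unimodularity enters.
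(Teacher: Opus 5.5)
Your proposal is correct and is exactly the argument the paper has in mind: the paper's proof is just ``This is trivial.'' Restricting the admissible inputs to functions supported in $U_j$ can only lower the best constant, so each $\BL(G,\bss,\bsp;\bsU) \leq \BL(G,\bss,\bsp)$, and taking the infimum over $\bsU$ gives the claim; you are also right that unimodularity plays no role here.
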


\begin{proof}
This is trivial.
\end{proof}

If $(G,\bss,\bsp)$ is a Brascamp--Lieb datum of Lie groups, then $(\Lie{g}, \bsDs, \bsp)$ is a Brascamp--Lieb datum of abelian groups, as treated in \cite{BCCT08}.
In particular, our definitions give a meaning to $\BL^{\loc}(\Lie{g}, \bsDs, \bsp)$.

\begin{thm}\label{thm:G-loc-g-loc}
Let $G$ be a Lie group with Lie algebra $\Lie{g}$, and assume that the Haar measures on $G$ and all $G_j$ and the Lebesgue measures on $\Lie{g}$ and all $\Lie{g}_j$ are normalised such that the Radon--Nikodym derivatives of the exponential mappings $\exp_G$ and $\exp_{G_j}$ take the value $1$ at $0$.
If $(G,\bss ,\bsp)$ is a canonical Brascamp--Lieb datum, then 
\begin{align*}
\BL^{\loc}(G,\bss ,\bsp) = \BL^{\loc}(\Lie{g},\bsDs ,\bsp).
\end{align*}
\end{thm}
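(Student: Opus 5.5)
The plan is to transport the local inequality from $\Lie{g}$ to $G$ and back through exponential coordinates, treating the Jacobian of $\exp$ and the failure of $\sigma_j\circ\exp_G = \exp_{G_j}\circ\mathrm{d}\sigma_j$ to be linear in any other sense as errors that vanish as the supports shrink. First I observe that $\sigma_j(\exp_G X) = \exp_{G_j}(\mathrm{d}\sigma_j X)$ holds exactly for every homomorphism of Lie groups. Next I fix small neighbourhoods $V$ of $0$ in $\Lie{g}$ and $V_j$ of $0$ in $\Lie{g}_j$ on which the exponentials are diffeomorphisms. I choose them with $\mathrm{d}\sigma_j(V)\subseteq V_j$, which is possible by continuity. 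On these neighbourhoods the Radon--Nikodym derivatives $J_G$ and $J_{G_j}$ of the exponential maps are continuous and equal $1$ at $0$. Hence for every $\varepsilon>0$ the $V$, $V_j$ can be shrunk so that all these Jacobians lie in $[1-\varepsilon,1+\varepsilon]$.

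Given $f_j\in C_c(G_j)$ supported in $U_j=\exp_{G_j}(V_j)$, I set $g_j=f_j\circ\exp_{G_j}$, extended by zero, so that $g_j\in C_c(\Lie{g}_j)$. The norms then satisfy $\norm{g_j}_{L^{p_j}}=(1+O(\varepsilon))\norm{f_j}_{L^{p_j}}$.

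The point that needs care is that the integrand on $G$ is supported in $\bigcap_j\sigma_j^{-1}(U_j)$, which I must place inside $\exp_G(V)$. For a canonical datum $\bss$ is injective and proper; indeed it is a homeomorphism onto the closed subgroup $G\subseteq\bsG$. So $\bigcap_j\sigma_j^{-1}(U_j)=G\cap\bsU$, and this set shrinks to $\{e\}$ as $\bsU$ shrinks. Hence, for given $V$, it lies in $\exp_G(V)$ once the $U_j$ are small enough. Similarly, $\bsDs$ is injective as the derivative of an injective homomorphism, so $\bigcap_j\mathrm{d}\sigma_j^{-1}(V_j)$ shrinks to $\{0\}$ as the $V_j$ do. Consequently, for small supports,
\[
\int_G\prod_j f_j(\sigma_j(x))\wrt x=\int_{V}\prod_j g_j(\mathrm{d}\sigma_j X)\,J_G(X)\wrt X ,
\]
where the integrand vanishes outside a set on which $J_G\in[1-\varepsilon,1+\varepsilon]$.

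Applying the Brascamp--Lieb inequality on $\Lie{g}$ with supports in $\bsV$ gives
\[
\BL(G,\bss,\bsp;\exp\bsV)\le(1+\varepsilon)^{C}\,\BL(\Lie{g},\bsDs,\bsp;\bsV),
\]
where $C$ depends only on $\bsp$ and $J$. For the reverse bound I take $g_j$ supported in small $V_j$ and define $f_j=g_j\circ\log$. The same identity then gives
\[
\BL(\Lie{g},\bsDs,\bsp;\bsV)\le(1+\varepsilon)^{C}\,\BL(G,\bss,\bsp;\exp\bsV).
\]

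The neighbourhoods $\exp(\bsV)$ and $\bsV$ form cofinal families in the respective nets. Taking infima over $\bsV$, and then letting $\varepsilon\to0$, yields the equality of the local constants. Throughout I handle absolute values by applying the argument to $|f_j|$, and I treat the case $p_j=\infty$ as routine.

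I expect the main obstacle to be the support-localisation step: making sure that small supports of the $f_j$ force the whole integrand into the chart. I handle it by the properness and injectivity of the canonical $\bss$ and of $\bsDs$ noted above. Unimodularity is not needed, since only Jacobians at points near the identity enter.
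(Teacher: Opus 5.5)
Your proposal is correct and is essentially the paper's argument. Both transfer test functions through $\exp_{G_j}$ and $\log_{G_j}$ and use the exact identity $\sigma_j\circ\exp_G=\exp_{G_j}\circ\mathrm{d}\sigma_j$. Both control the Jacobians $J_G$ and $J_{G_j}$ near $0$, and both use the canonical structure ($G\cap\bsU$ is small when $\bsU$ is small) to confine the integrand to a chart before letting the neighbourhoods shrink.

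The only difference is that you write out both inequalities and the injectivity of $\bsDs$ needed to localise the integral on $\Lie{g}$. The paper proves $\BL^{\loc}(\Lie{g},\bsDs,\bsp)\leq\BL^{\loc}(G,\bss,\bsp)$ and says the converse is proved similarly.
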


\begin{proof}
We show first that
\[
\BL^{\loc}(\Lie{g},\bsDs ,\bsp)\leq \BL^{\loc}(G,\bss ,\bsp) .
\]

Take small neighbourhoods $U_j$ of $0$ in $\Lie{g}_j$ and $V_j$ of $e$ in $G_j$ such that $\exp_{G_j} \colon U_j \to V_j $ is a diffeomorphism.
The Radon--Nikodym derivative of $\exp_{G_j}$,  denoted by $J_{G_j}$, is continuous on $\Lie{g}_j$ and takes the value $1$ at $0$.
For all functions $f_j \in C_c(\Lie{g}_j)$ such that $\supp(f_j) \subseteq U_j$, define $\tilde{f} \in C_c(G) $ by letting 
\begin{equation}\label{eq:def-approx-fn}
\tilde f_j(x)\coloneqq  
\begin{cases}
f_j (\log_{G_j} (x))  & \text{if $x \in V_j$} \\
0                      & \text{otherwise}.  
\end{cases}
\end{equation}
Clearly all functions $g_j \in C_c(G_j)$ such that $\supp(g_j) \subseteq V_j$ arise as $\tilde f_j$ for some $f_j \in C_c(\Lie{g}_j)$ such that $\supp(f_j) \subseteq U_j$. 
We claim that 
\begin{equation}\label{eq:claim}
\| \tilde{f} \|_{L^p({G_j})} 
\leq \sup_{X \in U} J_{G_j}(X)^{1/p_j} \| f_j \|_{L^{p_j} (\Lie{g}_j)}.
\end{equation}
This is obvious if $p=\infty$, while if $1\leq p<\infty$, then
\begin{align*}
\| \tilde {f} \|^p_{L^p({G_j})}
&=\int_{G_j} \labs  f(\log_{G_j} (x)) \rabs ^p\wrt x
=\int_{\Lie{g}_j}  \labs f(X) \rabs ^p J_{G_j}(X)\wrt X,
\end{align*}
by a change of variable.
Our claim \eqref{eq:claim} follows.

Without loss of generality, suppose  that the Brascamp--Lieb inputs $f_j$ are nonnegative.

Now,  $\bsV \coloneqq  V_1 \times \dots \times V_J$ is a small neighbourhood of $e$ in $\bsG$, whence $V \coloneqq  \bsV \cap G$ is a small neighbourhood of $e$ in $G$; likewise $\bsU$ is a small neighbourhood of $e$ in $\bsg$ and so $U \coloneqq  \bsU \cap \Lie{g}$ is a samll neighbourhood of $e$ in $\Lie{g}$.
If the $U_j$ are sufficiently small, then the $V_j$ are small, so $V$ is small and hence $U$ is small, so the restriction of $\exp_G$ to $U$ is a diffeomorphism, and we may write
\begin{align*}
 \int_{\Lie{g}} \prod_j f_j\lpar \mathrm{d}\sigma_j(X)\rpar  \wrt X 
&\leq \sup_{X \in U} J_{\Lie{g}}(X)^{-1}  
	\int_{\Lie{g}} \prod_j f_j\lpar \mathrm{d}\sigma_j(X)\rpar J_{\Lie{g}}(X) \wrt X .
\end{align*}
Further, the exponential maps are diffeomorphisms from $U_j$ to $V_j$ and from $U$ to $V$, and $\exp \circ \mathrm{d}\sigma_j = \sigma_j \circ \exp_G$, so
\begin{align*} 
 \int_{\Lie{g}} \prod_j f_j\lpar \mathrm{d}\sigma_j(X)\rpar J_{\Lie{g}}(X) \wrt X 
&=  \int_{G} \prod_j f_j\lpar \mathrm{d}\sigma_j\lpar \log _{G}(x)\rpar \rpar \wrt x\\
&= \int_{G} \prod_j f_j(\log _{G_{j}}\lpar \sigma_j(x)\rpar )\wrt x\\
&= \int_{G} \prod_j \tilde f_j(\sigma_j(x))\wrt x .
\end{align*}
By the definition of $\BL(G,\bss ,\bsp; \bsV)$,
\begin{align*}
 \int_{G} \prod_j \tilde f_j(\sigma_j(x)) \wrt x 
&\leq\BL(G,\bss ,\bsp; \bsV) \prod_j \norm{  \tilde f_j }_{L^{p_j}(G_j)}  \\
&=\BL(G,\bss ,\bsp: \bsV) \prod_j \sup_{X \in U} J_{G_j}(X)^{1/p_j} \prod_j \norm{  \tilde f_j }_{L^{p_j}(G_j)} .
\end{align*}
Putting everything together, it follows that
\[
\BL(\Lie{g},\bsDs ,\bsp; \bsU) 
\leq \sup_{X \in U} J_{\Lie{g}}(X)^{-1}  \prod_j \sup_{X \in U} J_{G_j}(X)^{1/p_j} \BL(G,\bss ,\bsp: \bsV) .
\]
We pass to the limit as $\bsU$ and $\bsV$ shrink, and deduce that $\BL^{\loc}(\Lie{g},\bsDs ,\bsp) \leq 
\BL^{\loc}(G,\bss ,\bsp)$.

The converse inequality is proved similarly. 
\end{proof}

\section{Examples involving homogeneous Lie groups}\label{sec:nilpotent}
We begin by reviewing some basic concepts about homogeneous groups. 
Much of the material presented here can be found in Chapter 1 of the classic treatise \cite{FS82}.

Let $G$ be a connected, simply connected nilpotent Lie group with Lie algebra $\Lie{g}$. 
Then the exponential map $\exp \colon \Lie{g} \to G$ is a global \emph{diffeomorphism}, with inverse $\log$. 
This allows us to identify $G$ with $\Lie{g}$ endowed with a group operation $(x, y) \mapsto x \diamond  y$. 
Furthermore, this group operation is a polynomial mapping and the usual Lebesgue measure on $\Lie{g}$ is a left- and right-invariant Haar measure for $G$. 

\begin{defn}
A dilation matrix on a Lie algebra is a Lie algebra homomorphism $\Lambda\colon  \Lie{g} \to \Lie{g}$ (that is, $\Lambda$ is linear and $\Lambda[X,Y] = [\Lambda X, Y] + [X, \Lambda Y]$ for all $X, Y \in \Lie{g}$) that is diagonalisable with positive eigenvalues.

Given a dilation matrix $\Lambda$ on $\Lie{g}$,  we fix a basis $\{X_1, \dots, X_n\}$ of $\Lie{g}$ consisting of eigenvectors of $\Lambda$, such that $X_i$ has eigenvalue $d_i$ and the $d_i$ increase with $i$. 
Let $\delta_t \coloneqq  \exp(\Lambda \log t)$ for all $t > 0$; then each $\delta_t$ is a Lie algebra automorphism (that is, $\delta_t$ is linear and $\delta_t[X,Y] = [\delta_t X, \delta_t Y ]$ for all $X, Y \in \Lie{g}$) and $\delta_{st} = \delta_s \circ \delta_t$ for all $s, t >0$.
Further,  $\delta_t X_j = t^{d_j} X_j$ for all $i$. 
We say that $(\delta_t)_{t > 0}$ is a \emph{family of dilations on $\Lie{g}$}.

A \emph{homogeneous group} is a connected, simply connected Lie group $G$ whose Lie algebra $ \Lie{g}$ is endowed with a family of dilations $(\delta_t)_{t > 0}$. 
The group $G$ is nilpotent and the maps $\exp \circ \,\delta_t\, \circ \log  \colon G \to G$ are group automorphisms, called \emph{dilations of $G$} and denoted by $\delta_t$. 
The \emph{homogeneous dimension} of a homogeneous group $G$ is the quantity
\begin{equation*}
    Q \coloneqq  \trace(\Lambda) = \sum_{i} d_j,
\end{equation*}
where  the $d_i $ are the positive eigenvalues of  the dilation matrix $\Lambda$, listed with multiplicity. 

A \emph{homogeneous norm} on the homogeneous Lie group $G$ is a continuous map $||\,\cdot\,||_{G} \colon G \to [0, \infty)$ which is $C^{\infty}$ on $G \setminus \{0\}$ and satisfies: 
\begin{enumerate}
\item $\|x^{-1} \|_{G} = \|x\|_{G}$ and $\|\delta_tx\|_{G} = \|x\|_{G}$ for all $x \in G$ and $t > 0$;
\item $\|x\|_{G} = 0$ if and only if $x = 0$.
\end{enumerate}
\end{defn}

The Haar measure on a homogeneous Lie group $G$ has the scaling property
\begin{equation*}
 t^{-Q} \int_{G} f \circ \delta_{t^{-1}}(x) \wrt x = \int_{G} f(x) \wrt x 
\end{equation*}
for all $f \in L^1(G)$ and all $t > 0$.

It is easy to see that a homogeneous Lie group admits at least one homogeneous norm and we  fix one such norm $\|\,\cdot\,\|_{G}$ on $G$.
Then there exists a constant $C_G \geq 1$ such that 
\begin{equation}\label{eq: quasi triangle}
   \|x y\|_{G} \leq C_G\bigl(\|x\|_{G} + \|y\|_{G}\bigr) 
\qquad\forall x, y \in G.
\end{equation}

We now focus on the behaviour of the Brascamp--Lieb constant on homogeneous groups.
Here it is natural to restrict our attention to homomorphisms $\sigma_j \colon  G \to G_j$ that intertwine the dilations on $G$ and on $G_j$, that is
\begin{equation}\label{eq:def-homo-datum}
\sigma_j (\delta^G_t(x)) = \delta^{G_j}_t (\sigma_j (x)) 
\qquad\forall x \in G \quad\forall t > 0.
\end{equation}
In the canonical case, this amounts to asking that the dilations $\delta^{G_1}_t \otimes \dots \otimes \delta^{G_J}_t$ of $\bsG$ preserve the subgroup $G$ and agree on $G$  with the dilations $\delta^{G}_t$ of $G$ (here the notation should be obvious).
A scaling argument like that used in \cite{BCCT08} shows that, if $\BL(G, \bss,\bsp)$ is finite, that 
\begin{align}\label{homodimcond}
Q=\sum_j \frac{1}{p_j} Q_j  .
\end{align}
We use the expression ``a Brascamp--Lieb datum of homogeneous groups'' to mean a datum where $G$ and all $G_j$ are homogeneous groups and \eqref{eq:def-homo-datum} holds.

A scaling argument also underlies the next result.

\begin{thm}\label{thm:G-loc-G}
Suppose that $(G,\bss ,\bsp)$ is a canonical Brascamp--Lieb datum of homogeneous groups.
Denote by $Q$ and $Q_j$ the homogeneous dimensions of $G$ and $G_j$ respectively.
If the scaling condition \eqref{homodimcond} holds, then $\BL^{\loc}(G,\bss ,\bsp)= \BL(G,\bss ,\bsp)$.
\end{thm}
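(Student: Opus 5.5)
We must prove two inequalities. The inequality $\BL^{\loc}(G,\bss,\bsp) \leq \BL(G,\bss,\bsp)$ is Proposition \ref{prop:trivial-estimate}, since homogeneous groups are nilpotent and hence unimodular. So the task is the reverse inequality $\BL(G,\bss,\bsp) \leq \BL(G,\bss,\bsp;\bsU)$ for every relatively compact product neighbourhood $\bsU = U_1\times\dots\times U_J$ of the identity. Taking the infimum over $\bsU$ then gives $\BL(G,\bss,\bsp)\leq\BL^{\loc}(G,\bss,\bsp)$. We may assume $\BL(G,\bss,\bsp;\bsU)$ is finite, since otherwise there is nothing to prove.

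The idea is to use dilations to shrink supports. Take $f_j \in C_c(G_j)$, which we may take nonnegative. Because $\supp(f_j)$ is compact and the dilations $\delta^{G_j}_s$ contract towards $e$ as $s\to 0$ (this is continuity of $(s,x)\mapsto \delta_s x$ together with $\delta_s x \to e$ uniformly on compacta, e.g.\ via the homogeneous norm $\|\delta_s x\|_{G_j}\to 0$), there is $t>0$ such that $\delta^{G_j}_{t^{-1}}(\supp f_j)\subseteq U_j$ for every $j$. Put $g_j \coloneqq f_j\circ\delta^{G_j}_t$, so that $\supp(g_j) = \delta^{G_j}_{t^{-1}}\supp(f_j)\subseteq U_j$.

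Now compute both sides of the inequality. The intertwining relation \eqref{eq:def-homo-datum} gives $g_j(\sigma_j(x)) = f_j(\sigma_j(\delta^G_t x))$. The scaling property of Haar measure on $G$ then yields
\[
\int_G \prod_j g_j(\sigma_j(x))\wrt x = t^{-Q}\int_G \prod_j f_j(\sigma_j(x))\wrt x .
\]
Similarly, on each $G_j$ we have $\norm{g_j}_{L^{p_j}(G_j)} = t^{-Q_j/p_j}\norm{f_j}_{L^{p_j}(G_j)}$; this is trivially true when $p_j=\infty$. Applying the local inequality to the $g_j$ gives
\[
t^{-Q}\int_G \prod_j f_j(\sigma_j(x))\wrt x \leq \BL(G,\bss,\bsp;\bsU)\, t^{-\sum_j Q_j/p_j}\prod_j\norm{f_j}_{L^{p_j}(G_j)} .
\]
By the scaling condition \eqref{homodimcond} the powers of $t$ cancel. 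Hence $\BL(G,\bss,\bsp)\leq \BL(G,\bss,\bsp;\bsU)$, which is what we needed.

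The only delicate points are checking that the dilations on $G$ agree with the restrictions of the product dilations on $\bsG$, which is exactly what the canonical homogeneous setup provides, and justifying the uniform contraction of compact sets into $U_j$. I expect the latter to be the main (minor) obstacle, and it follows from $\|\delta_s x\|_{G_j} = s\|x\|_{G_j}$-type homogeneity of a suitable homogeneous norm, or directly from $\delta_s = \exp(\Lambda\log s)$ acting on coordinates by $s^{d_i}$ with $d_i>0$.
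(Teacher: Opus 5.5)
Your proposal is correct and is essentially the paper's own argument: you shrink the supports of $C_c$ inputs into $\bsU$ by composing with dilations, use the intertwining relation \eqref{eq:def-homo-datum} and the scaling of Haar measure, cancel the powers of the dilation parameter via \eqref{homodimcond}, and get the other inequality from Proposition \ref{prop:trivial-estimate}. Your choice $g_j = f_j \circ \delta^{G_j}_t$ with $t$ large is the right direction; the paper composes with $\delta_\lambda$ for $\lambda<1$, which as written enlarges the supports rather than shrinking them, a harmless slip that your version avoids.
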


\begin{proof}
Let $\bsU$ be a product neighbourhood of $e$ in $\bsG$, that is, $\bsU = U_1 \times \dots \times U_J$, where each $U_j$ is a relatively compact neighbourhood of $e$ in $G_j$.
By definition and Proposition \ref{prop:trivial-estimate}, it suffices to show that $\BL(G,\bss ,\bsp ;\bsU)
\geq     \BL(G,\bss ,\bsp)$.
Now
\[ 
\BL(G,\bss ,\bsp) 
= \sup_{f_j\in L^{p_j}(G_j) \setminus \{0\}} 
	\frac{\biglabs \int_G \prod_j f_j (\sigma_j(x)) \wrt x \bigrabs}{\prod_j \norm{f_j}_{L^{p_j}(G_j)}}.
\]
A density argument shows that it suffices to consider $C_c(G_j)$ inputs $f_j$ in the supremum.
Fix such inputs $f_j$; then there exists $\lambda < 1$ such that $\delta_{\lambda} (\supp(f_j)) \subseteq U_j$ for each $j$.
Let $\tau_j$ be the contractive automorphism $\delta_{\lambda}$ on $G_j$, and consider the inputs $f_j \circ \tau_j$.
Then 
\[ 
\begin{aligned}
 \dfrac{\biglabs \int_G \prod_j f_j (\tau_j(\sigma_j(x))) \wrt x \bigrabs}{\prod_j \norm{f_j \circ \tau_j}_{L^{p_j}(G_j)}} 
&=  \dfrac{\lambda^{Q} \biglabs \int_G  \prod_j f_j (\tau_j(\sigma_j(x))) \wrt x \bigrabs}{\lambda^{\sum{Q_j/p_j}} \prod_j \norm{f_j }_{L^{p_j}(G_j)}} 
= \dfrac{ \biglabs \int_G \prod_j f_j (\sigma_j(x)) \wrt x \bigrabs}{\prod_j \norm{f_j}_{L^{p_j}(G_j)}} \,.
\end{aligned}
\]
Hence 
\[
\BL(G,\bss ,\bsp ; \bsU) \geq     \BL(G,\bss ,\bsp),
\]
and the proof is complete.
\end{proof}

\begin{cor} 
Suppose that $(G,\bss ,\bsp)$ is a canonical Brascamp--Lieb datum of homogeneous groups.
Denote by $Q$ and $Q_j$ the homogeneous dimensions of $G$ and $G_j$ respectively.
If the scaling condition \eqref{homodimcond} holds, then $\BL(G,\bss ,\bsp)= \BL(\Lie{g}, \bsDs ,\bsp)$.
\end{cor}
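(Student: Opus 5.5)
The plan is to chain together the three equalities already available:
\[
\BL(G,\bss,\bsp) = \BL^{\loc}(G,\bss,\bsp) = \BL^{\loc}(\Lie{g},\bsDs,\bsp) = \BL(\Lie{g},\bsDs,\bsp).
\]
The first equality is Theorem \ref{thm:G-loc-G}, which applies because the scaling condition \eqref{homodimcond} is assumed. The second is Theorem \ref{thm:G-loc-g-loc}. Its hypothesis on normalisations holds automatically here. We identify $G$ and each $G_j$ with their Lie algebras via $\exp$, and Lebesgue measure is a Haar measure. Hence the Radon--Nikodym derivatives of the exponential maps are identically $1$, and in particular equal to $1$ at $0$.

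The third equality needs a separate argument, since Theorem \ref{thm:G-loc-G} is stated for the groups $G$ and $G_j$, not for the abelian groups $\Lie{g}$ and $\Lie{g}_j$. My first choice is to apply Theorem \ref{thm:G-loc-G} directly to the datum $(\Lie{g},\bsDs,\bsp)$, viewing it as a datum of homogeneous groups. Each $\Lie{g}_j$, with vector addition, is an abelian homogeneous group. The dilations $\delta_t$, which are linear maps on the Lie algebra, are automorphisms of $(\Lie{g}_j,+)$. Since $\Lambda$ is diagonalisable with positive eigenvalues, the homogeneous dimension of $(\Lie{g}_j,+)$ under $\delta_t$ is still $\trace(\Lambda_j)=Q_j$. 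Differentiating \eqref{eq:def-homo-datum} at the identity gives $\mathrm{d}\sigma_j\circ\delta^{\Lie{g}}_t=\delta^{\Lie{g}_j}_t\circ \mathrm{d}\sigma_j$, so the datum is intertwining. It is also canonical: $\Lie{g}$ sits inside $\bsg$ as the Lie algebra of the closed subgroup $G\subseteq\bsG$, and the maps $\mathrm{d}\sigma_j$ are the coordinate projections. The same scaling condition $Q=\sum_j Q_j/p_j$ therefore holds, and Theorem \ref{thm:G-loc-G} gives $\BL^{\loc}(\Lie{g},\bsDs,\bsp)=\BL(\Lie{g},\bsDs,\bsp)$.

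The main point requiring care is that the proof of Theorem \ref{thm:G-loc-G} never uses commutativity or the particular group law. It uses only three facts. First, $\delta_\lambda$ with $\lambda<1$ maps any compact set into a prescribed neighbourhood of the identity. Second, Haar measure scales by $\lambda^{Q}$ under $\delta_\lambda$. Third, the $\sigma_j$ intertwine the dilations. All three hold for the linear structure just as for the group structure, so the argument transfers verbatim.

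Alternatively, we can avoid this point altogether. Under $\exp$, the maps $\mathrm{d}\sigma_j$ and $\sigma_j$ are literally the same linear map $\Lie{g}\to\Lie{g}_j$, and the Haar measures coincide with Lebesgue measure. The integral $\int_G\prod_j f_j(\sigma_j(x))\wrt x$ then does not involve the group multiplication at all. So for every input, the Brascamp--Lieb form for $(G,\bss,\bsp)$ equals the form for $(\Lie{g},\bsDs,\bsp)$, and the two constants coincide directly. This is consistent with the chain above and can be recorded as a sanity check.
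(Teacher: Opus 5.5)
Your main chain $\BL(G,\bss,\bsp)=\BL^{\loc}(G,\bss,\bsp)=\BL^{\loc}(\Lie{g},\bsDs,\bsp)=\BL(\Lie{g},\bsDs,\bsp)$ is exactly the paper's proof. This includes the key point that Theorem~\ref{thm:G-loc-G} is applied to the abelian datum on $\Lie{g}$ with the dilations $\delta_t$ rather than scalar multiplication, and your check of its hypotheses there is correct. Your closing observation is also correct and is in fact a complete, more direct proof: $\sigma_j=\exp_{G_j}\circ\mathrm{d}\sigma_j\circ\log_G$ and Haar measure is Lebesgue measure in exponential coordinates, so the two multilinear forms and all the norms coincide identically. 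This shows the equality (possibly as $\infty=\infty$) without using the dilations, the intertwining property or the scaling condition, only that the groups are connected, simply connected and nilpotent.
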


\begin{proof}
This follows by applying Theorems \ref{thm:G-loc-g-loc} and \ref{thm:G-loc-G} to $\BL(G, \bss,\bsp)$ and $\BL(\Lie{g}, \bsDs,\bsp)$ (using the dilations $\delta_t$ of the Lie algebra, not scalar multiplications).
\end{proof}

We conclude this section with the observation that there are no Brascamp--Lieb inequalities other than multilinear Hölder inequalities where the group $G$ is the Heisenberg group or a group like the Heisenberg group, in a sense that we explain shortly.

\begin{defn}
Let $n\in\mathbb{N}$. 
The \emph{Heisenberg group} $\Hn$ is defined to be the group with underlying space $\C^n\times\R$, equipped with the group law: 
\begin{align*}
(z,t)  \diamond  (z',t') \coloneqq  \lpar z+z', t + t' + \tfrac{1}{2} \Im(\bar z \cdot z') \rpar .
\end{align*}
The centre $Z(\Hn)$ of $\Hn$ is the subgroup $\{ (0,t) : t \in \R \}$.
\end{defn}

\begin{lem}\label{lem:kernel-homom-Heisenberg}  
Suppose that $N$ is a normal subgroup of the Heisenberg group $\Hn$.
Then either $N \subseteq (0, \R)$ or  $N \supseteq (0, \R)$.
\end{lem}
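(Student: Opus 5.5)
Our plan is a direct commutator computation in the group law of $\Hn$. Suppose that $N$ is a normal subgroup of $\Hn$ and that $N \not\subseteq (0,\R)$. Then $N$ contains an element $(z,t)$ with $z \neq 0$, and we aim to show that every central element $(0,s)$ lies in $N$. Since $N$ is normal, $N$ contains every commutator $g \diamond (z,t) \diamond g^{-1} \diamond (z,t)^{-1}$ with $g \in \Hn$. We therefore compute these commutators explicitly.

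In the stated group law, $(z,t)^{-1} = (-z,-t)$, because $\Im(\bar z\cdot z) = 0$. For $g = (w,u)$, a routine expansion gives the standard identity
\[
(w,u)\diamond(z,t)\diamond(w,u)^{-1}\diamond(z,t)^{-1} = \lpar 0, \Im(\bar w\cdot z)\rpar .
\]
This holds because the $\C^n$ components cancel and the symplectic terms add up to $\Im(\bar w \cdot z)$, up to a sign that depends on conventions and does not matter here. Now fix $z \neq 0$. The real-linear map $w \mapsto \Im(\bar w\cdot z)$ from $\C^n$ to $\R$ is nonzero: for example, $w = iz$ gives $\Im(\overline{iz}\cdot z) = \Im(-i|z|^2) = -|z|^2 \neq 0$. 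A nonzero linear functional is surjective, so as $w$ ranges over $\C^n$ every value $s \in \R$ occurs. Hence $(0,s) \in N$ for all $s$, that is, $N \supseteq (0,\R)$.

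The only point requiring care is verifying the commutator formula with the factor $\tfrac12$ and the sign conventions, and the conclusion does not depend on either. The argument uses no closedness or continuity of $N$, so it applies to arbitrary normal subgroups, which matches the statement.
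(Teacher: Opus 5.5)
Your proposal is correct and is essentially the paper's argument: take $(z,t)\in N$ with $z\neq 0$, use normality to put every commutator with $(z,t)$ into $N$, and observe that these commutators are $(0,\Im(\bar w\cdot z))$ and so sweep out the whole centre. Your commutator formula is in fact exact, with no sign ambiguity, and your check via $w=iz$ that $w\mapsto\Im(\bar w\cdot z)$ is a nonzero real-linear functional supplies the surjectivity step that the paper asserts without comment.
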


\begin{proof}
If $(z,t) \in N$, then $(z,t)^{-1} \diamond (z',t')^{-1} \diamond (z,t) \diamond (z',t') \in N$ for all $(z', t') \in \Hn$.
A straightforward calculation shows that
\[ 
(z,t)^{-1} \diamond(z',t')^{-1} \diamond  (z,t) \diamond (z' , t') = (0, \Im(\bar 
z \cdot z') ).
\]
If $N \not\subseteq (0, \R)$, then there exists $(z,t) \in N$ such that $z \neq 0$;
in this case, the commutator $(z,t)^{-1} \diamond(z',t')^{-1} \diamond  (z,t) \diamond (z',t') $ varies over $(0, \R)$ as $(z',t')$ varies over $\Hn$, and $N \supseteq (0, \R)$.
Otherwise  $N \subseteq (0, \R)$, as required.
\end{proof}

A homogeneous group $G$ is Heisenberg-like if it has a normal subgroup $N$, isomorphic to $\R$, with the property that every normal subgroup of $G$ is either contained in $N$ or contains $N$.
The groups of $n \times n$ upper triangular unipotent matrices are Heisenberg-like when $n \geq 3$.

Before our next theorem, we recall a simple variant of Kronecker's approximation theorem (see, e.g., \cite{Boh34, HW54}).

\begin{thm}\label{approKron}
Given finitely many real numbers $\alpha_1,\cdots,\alpha_J$ and arbitrarily small $\varepsilon>0$, there exists a sequence $\{t_m\}_m$ such that $t_m\to\infty$ when $m\to\infty$ and, for all $m$ and $j$, there exists an integer $k_{j,m}$ such that 
\[
\labs  { t_m}-  \alpha_j k_{j,m} \rabs <\varepsilon
\qquad\forall1\leq j\leq J \quad\forall m \in \N.
\]
\end{thm}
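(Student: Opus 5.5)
We argue by the pigeonhole principle on the torus $\R^{J-1}/\Z^{J-1}$, in the style of Dirichlet's simultaneous approximation theorem. We assume, as is implicit in the statement, that every $\alpha_j$ is nonzero. If some $\alpha_j = 0$, the condition reads $|t_m| < \varepsilon$, which is incompatible with $t_m \to \infty$; and in any case zero entries can be dropped wherever the theorem is applied. Replacing $\alpha_j$ by $-\alpha_j$ and $k_{j,m}$ by $-k_{j,m}$ changes nothing, so we may also assume that $\alpha_1 > 0$.

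The plan is to look for $t$ of the form $t = n\alpha_1$ with $n$ a positive integer. Then $k_{1} = n$ gives $|t - \alpha_1 k_1| = 0$ exactly. For $j \geq 2$, put $\beta_j \coloneqq \alpha_1/\alpha_j$ and let $k_j$ be the integer nearest to $n\beta_j$. Then
\[
|t - \alpha_j k_j| = |\alpha_j|\, \| n\beta_j \|,
\]
where $\|\cdot\|$ denotes the distance to the nearest integer. Set $\delta \coloneqq \varepsilon / \max_j |\alpha_j|$. It therefore suffices to produce arbitrarily large positive integers $n$ such that $\| n\beta_j \| < \delta$ for all $j$ with $2 \leq j \leq J$.

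To do this, fix a large integer $L$ and an integer $N > 1/\delta$. Partition $[0,1)^{J-1}$ into $N^{J-1}$ boxes of side $1/N$, and consider the $N^{J-1}+1$ points
\[
\bigl( \{ rL\beta_2 \}, \dots, \{ rL\beta_J \} \bigr),
\qquad r = 0, 1, \dots, N^{J-1},
\]
where $\{\cdot\}$ denotes the fractional part. By the pigeonhole principle, two of these points, say those with indices $r < r'$, lie in the same box. Then $n \coloneqq (r'-r)L$ satisfies $n \geq L$ and $\| n\beta_j \| \leq 1/N < \delta$ for all $j$. Taking $L = 1, 2, 3, \dots$ produces integers $n_m \geq m$, and hence $t_m \coloneqq n_m\alpha_1 \geq m\alpha_1 \to \infty$, with the required integers $k_{j,m}$.

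No step is a serious obstacle. The one point needing care is ensuring that $t_m \to \infty$ rather than merely being nonzero. Plain Dirichlet pigeonholing only gives some $n$ between $1$ and $N^{J-1}$, so we run the pigeonhole argument along the progression $L\Z$, which forces $n \geq L$.
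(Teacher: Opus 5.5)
Your argument is correct. It differs from the paper only in that the paper gives no proof at all: it quotes the statement as a ``simple variant'' of Kronecker's approximation theorem and refers to Bohr and to Hardy--Wright.

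What you supply is a self-contained proof of the homogeneous, Dirichlet-type statement that is actually needed. Restricting to $t\in\alpha_1\Z$ makes the first error vanish and reduces everything to simultaneous approximation of the ratios $\beta_j=\alpha_1/\alpha_j$ by integers. Running the pigeonhole principle along the progression $L\Z$ then forces $n\geq L$, hence $t_m\to\infty$; the degenerate case $J=1$ is covered trivially.

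Kronecker's theorem proper is an inhomogeneous density statement that requires $\mathbb{Q}$-linear independence. Applying it to arbitrary $\alpha_j$ would therefore first need a reduction to a linearly independent family with the same rational span. Your route needs no independence hypothesis and is elementary. It even gives the explicit bound $m\leq n_m\leq mN^{J-1}$ on the integers producing $t_m$. The extra strength of Kronecker's theorem, approximating arbitrary targets rather than just $0$ modulo each $\alpha_j\Z$, is never used in the application to Heisenberg-like groups.

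You are also right that the statement as written tacitly requires every $\alpha_j\neq 0$, since otherwise $|t_m|<\varepsilon$ contradicts $t_m\to\infty$. This is harmless, because the paper only applies it with $\alpha_j\in\Rplus$.
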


\begin{prop}
The only Brascamp--Lieb inequalities in which $G$ is a Heisenberg-like group are multilinear Hölder inequalities.
\end{prop}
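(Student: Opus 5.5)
We will work with a canonical datum $(G,\bss,\bsp)$ with $\BL(G,\bss,\bsp)$ finite, where $G$ is Heisenberg-like with distinguished central-type normal subgroup $N\cong\R$. By Lemma \ref{lem:bss-must-be-proper} and Proposition \ref{prop:pk=infty}, we may discard indices with $p_j=\infty$ and assume every $\sigma_j$ is open and surjective. The aim is to show that every $\sigma_j$ is injective, hence an isomorphism of $G$ onto $G_j$. In that case, after identifying each $G_j$ with $G$ (Haar measures changing only by scalars), the form becomes $\int_G\prod_j f_j$. The scaling condition \eqref{homodimcond}, which reads $Q=\sum_j Q/p_j$, then forces $\sum_j 1/p_j=1$, so the inequality is the multilinear Hölder inequality.

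The key structural step applies the Heisenberg-like dichotomy to each kernel. Each $\ker(\sigma_j)$ is a closed normal subgroup, so either $\ker(\sigma_j)\subseteq N$ or $\ker(\sigma_j)\supseteq N$. Kernels of dilation-intertwining homomorphisms are dilation invariant, so a kernel contained in $N\cong\R$ is either trivial or all of $N$. Hence each $\sigma_j$ is either injective or kills $N$. Since $\bss$ is proper (Corollary \ref{cor:kernel-compact}), the kernels cannot all contain the noncompact group $N$, so at least one $\sigma_j$ is injective. Let $A$ be the set of $j$ with $N\subseteq\ker(\sigma_j)$ and $B$ its complement, which is nonempty. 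We must show $A=\emptyset$.

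To rule out $A\neq\emptyset$, I would work with the linearised problem. By the Corollary following Theorem \ref{thm:G-loc-G}, $\BL(G,\bss,\bsp)=\BL(\Lie{g},\bsDs,\bsp)$, so the BCCT conditions \eqref{eq:BCCT-1} and \eqref{eq:BCCT-2} hold for $\mathrm{d}\sigma_j$. Taking $H=\Lie{n}$, the one-dimensional subalgebra of $N$, gives $1\le\sum_{j\in B}1/p_j$. Taking $H=\Lie{g}$ and using that $\mathrm{d}\sigma_j$ is a bijection for $j\in B$ gives
\[
\dim\Lie{g}=\sum_{j\in B}\frac{\dim\Lie{g}}{p_j}+\sum_{j\in A}\frac{\dim\Lie{g}_j}{p_j}.
\]
This forces $\sum_{j\in B}1/p_j\le1$, with equality only if $A=\emptyset$ (recall that $p_j<\infty$ and that the $\sigma_j$ for $j\in A$ are surjective onto nontrivial groups, which we may assume).

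The main obstacle is that these two BCCT facts are consistent with $\sum_{j\in B}1/p_j=1$ and $A=\emptyset$, but also leave the subcase where the Lie algebra data alone is not contradictory. So the noncommutativity must enter through a different mechanism. I would first try testing inputs adapted to group structure on $G$ itself, for instance $f_j$ for $j\in A$ being indicators of large balls in $G_j$, which equal $G/N$ up to isomorphism. The point is that integration over $N$-cosets then decouples, because the $j\in B$ factors see $N$ while the $j\in A$ factors do not. This should give a homogeneous-dimension count, using $Q_j=Q-d_N$ for $j\in A$, that is incompatible with \eqref{homodimcond} unless $A=\emptyset$. If that count fails, the fallback is the Kronecker-type Theorem \ref{approKron}: I would use it to choose dilation parameters $t_m\to\infty$ that make the various factors simultaneously near-periodic, producing a sequence of test functions whose ratio blows up.
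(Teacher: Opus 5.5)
\textbf{There is a genuine gap: your argument covers only a special case.} The dichotomy ``$\sigma_j$ is injective or kills $N$'' comes from dilation invariance of $\ker(\sigma_j)$, that is, from \eqref{eq:def-homo-datum}. The scaling condition \eqref{homodimcond} and the identity $\BL(G,\bss,\bsp)=\BL(\Lie{g},\bsDs,\bsp)$ further require every $G_j$ to be a homogeneous group and every $\sigma_j$ to intertwine dilations.

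The proposition only asks that $G$ be Heisenberg-like. The $\sigma_j$ are arbitrary open surjective homomorphisms onto locally compact groups. The paper's proof is organised around exactly the case you exclude, $\ker(\sigma_j)=(0,\alpha_j\Z)$, for instance $\sigma_j\colon\Hn\to\Hn/(0,\Z)$. Such a $G_j$ has no dilations. Moreover, no linearisation can detect the obstruction, because $\mathrm{d}\sigma_j$ is then a Lie algebra isomorphism.

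Concretely, take $\sigma_1=\mathrm{id}$, $\sigma_2\colon\Hn\to\Hn/(0,\Z)$ and $p_1=p_2=2$. The derivative datum is Cauchy--Schwarz, so it satisfies the BCCT conditions and the local constant is finite. Now let $f_2$ be a fixed bump equal to $1$ on $\sigma_2(U\diamond(0,[0,1]))$, and let $f_1$ be the indicator of $U\diamond(0,[-T,T])$. The left side grows like $T$, while the right side grows only like $T^{1/2}$. The failure is global, along the centre.

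\textbf{The missing idea is the paper's global argument.} First use the converse of H\"older's inequality to absorb the injective $\sigma_j$, which are isomorphisms onto their $G_j$. Every remaining $\sigma_j$ then satisfies $(0,\alpha_j\Z)\subseteq\ker(\sigma_j)$ for some $\alpha_j>0$; any $\alpha_j$ works if the kernel contains $N$.

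Then apply Theorem \ref{approKron} to \emph{central translations}, not to dilation parameters as in your fallback. There are $t_m\to\infty$ with each $t_m$ within $\epsilon$ of $\alpha_j\Z$ for all remaining $j$ at once. So the remaining homomorphisms send $(0,t_m)$ arbitrarily close to the identity, and the reduced homomorphism fails to be proper. Concretely, choose fixed $f_j$ equal to $1$ on $\sigma_j((0,[-\epsilon,\epsilon])\diamond U)$. Then the integrand equals $1$ on the infinitely many disjoint sets $(0,t_m)\diamond U$, so the integral diverges.

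\textbf{Within the setting you did treat, your proof is already finished.} Taking $H=\Lie{n}$ gives $\sum_{j\in B}1/p_j\ge1$. The scaling identity gives $\sum_{j\in B}1/p_j\le1$, with equality only if $A=\emptyset$. Together these force $A=\emptyset$, so there is no leftover subcase. Your ``main obstacle'' paragraph misreads your own inequalities. The case that is genuinely uncovered is the discrete-kernel one above, and only a global argument such as the Kronecker step handles it.
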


\begin{proof}
The subgroup $N$ is isomorphic to  $\R$.
We write $(0, t)$ for the element of $N$ that corresponds to $t \in \R$.
Then the closed subgroups of $N$ are all of the form $(0, \Z \alpha)$ for some $(0, \alpha) \in N$.

We may suppose that each $\sigma_j\colon  G \to G_j$ is a surjective open homomorphism of $G$ onto a group $G_j$.
The kernel of $\sigma$ is a closed normal subgroup of $G$, and $G$ is Heisenberg-like, so $\ker\sigma_j$ is either $\{e\}$, or of the form $(0, \alpha_j \Z)$, or contains $(0,\R)$.
By renumbering the $\sigma_j$ if necessary, we may and shall suppose that $\ker\sigma_j = \{e\}$ when $j = 1, \dots, I$.
If there are no such $\sigma_j$, then $I = 0$ and the next two paragraphs may be skipped.

For $j \leq I$, the group  $G_j$ is isomorphic to $G$.
Now the push-forward of the Haar measure on $G$ is a positive multiple of the  Haar measure on $G_j$, so $f_j \in L^{p_j}(G_j)$ if and only if $f_j \circ \sigma_j \in L^{p_j}(G)$ and there is a constant $C_j$ such that $\| f_j \circ \sigma_j\|_{L^{p_j}(G)} = C_j \| f_j \|_{L^{p_j}(G_j)}$.
By Hölder's inequality, $\prod_{j \leq I} f_j \circ \sigma_j \in L^p(G)$, where $1/p = 1/p_1 + \dots + 1/p_I$, and every function in $L^p(G)$ arises in this way.
Since $\prod_j f_j \circ \sigma_j$ is integrable by assumption, $p \geq 1$.
Therefore, by the converse of Hölder's inequality, the Brascamp--Lieb inequality 
\[
\int_{G} \prod_j f_j \circ \sigma_j(x) \wrt x \leq C \prod_{j} \| f_j \|_{L^{p_j}(G_j} 
\]
holds if and only if 
\[
\bigglpar\int_{G} \Biglabs \prod_{j > I} f_j \circ \sigma_j(x) \Bigrabs^{p'} \wrt x \biggrpar^{1/p'} 
\leq C' \prod_{j\geq I} \| f_j \|_{L^{p_j}(G_j} ,
\]
for some (possible different) constant $C'$, and this is in turn equivalent to the inequality
\[
\int_{G} \bigglabs \prod_{j > I} f_j \circ \sigma_j(x) \biggrabs \wrt x
\leq C' \prod_{j > I} \bigglpar \int_{G_j} \labs f_j(x) \rabs^{p_j /p'} \wrt x \biggrpar^{p'/p_j}
\]
for all $f \in L^{p_j/p}(G_j)$; that is,  a Brascamp--Lieb inequality holds for the datum $(G, \bsrho, \bsq)$, where $(\rho_1, \dots, \rho_{J'})$ is the collection of homomorphisms $(\sigma_{I+1}, \dots, \sigma_{J})$ and $(q_1, \dots, q_{J'})$ is the collection of indices $(p_{I+1}/p', \dots, p_{J}/p')$.
(It might be that some $p_j/p' < 1$, but this is not a matter of concern.)

If $J = I$, then we have a multilinear Hölder inequality.

Otherwise $J > I$ and  there are homomorphisms $\sigma_{I+1}$, \dots, $\sigma_J$, all of whose kernels are nontrivial, and for each $j > I$, there exists $\alpha_j \in \Rplus $ such that $(0, \alpha_j \Z) \subseteq  \ker\sigma_j$.
We take a nonempty relatively compact open set $U$ in the Heisenberg group, and choose nonnegative functions $f_j \in L^{p_j/p'}(G_j)$ such that $f_j(y) = 1$ for all $y \in ( 0,  [-\epsilon,\epsilon])  \diamond U$.

In light of Theorem \ref{approKron}, there is an infinite sequence of elements $(0,t_m)$ of $(0, \R)$ such that $t_m \to \infty$ as $m \to \infty$ and, for all $m$ and $j$, there exists $k_{j,m} \in \Z$ such that $\labs t'_m \rabs < \epsilon$, where $t_m' = t_m - \alpha_j k_{j,m}$.  
By passing to a subsequence if necessary, we may suppose that the sets $(0,t_n)^{-1} \diamond U$ are disjoint.
Since $(0, \alpha_j k) \in \ker(\sigma_j )$ for all integers $k$,
\[
\begin{aligned}
\int_{G} \prod_j  f_j \circ \sigma_j(x) \wrt x 
&\geq \sum_{m} \int_{(0,t_m)^{-1} \diamond U} \prod_j  f_j \circ \sigma_j(x) \wrt x \\
&= \sum_{m} \int_{U} \prod_j  f_j \circ \sigma_j((0,t_m) \diamond  x) \wrt x \\
&= \sum_{m} \int_{U} \prod_j  f_j \circ \sigma_j((0,t_m' ) \diamond  x) \wrt x \\
&= \sum_{m} \int_{U} 1 \wrt x \\
&= \infty.
\end{aligned}
\]
This shows that no Brascamp--Lieb inequality can hold in this case.

Consequently, there cannot be any $\sigma_j$ whose kernel is not $\{e\}$, and the only possible Brascamp--Lieb inequality is a multilinear Hölder inequality.
\end{proof}

Young's inequality for convolution for the Heisenberg group is a well known example of a Brascamp--Lieb inequality involving the Heisenberg group, but products of Heisenberg groups are needed to write this; see Example \ref{ex:Young}  for more information.

\begin{rem}
A Loomis--Whitney  inequality for the Heisenberg group was established in \cite{FP22}; however, the Heisenberg projections in that article are {not} homomorphisms.
\end{rem}

\section{The Brascamp--Lieb constant on compact Lie groups}
We begin this section with a review of the structure theory of compact Lie groups.

\begin{defn}
A Lie algebra $\Lie{g}$ is \emph{compact} if it is the Lie algebra of a compact Lie group.
A Lie algebra $\Lie{g}$ is \emph{simple} if it is not abelian and it has no ideals besides $\{0\}$ and itself; it is \emph{semisimple} provided that there are simple Lie ideals $\Lie{s}_i$ such that $\Lie{g}=\sum_{i\in I} \Lie{s}_i$.
\end{defn}

A compact Lie algebra is a direct sum of its semisimple commutator algebra $[\Lie{g}, \Lie{g}]$ and its centre $\Lie{z}(\Lie{g})$.

If the compact semisimple Lie algebra $\Lie{g}$ is a sum of simple Lie algebras $\Lie{s}_i$, then $-B$, the negative of the Killing form, is an inner product on $\Lie{g}$ relative to which the subalgebras $\Lie{s}_i$ are orthogonal, and the restriction of the Killing form of $\Lie{g}$ to $\Lie{s}_i$ is the Killing form of $\Lie{s}_i$.

Let $G$ be a compact Lie group.
The negative of the Killing form is an inner product on $\Lie{g}'$.
Since $\Lie{g} = \Lie{g}' \oplus \Lie{z}(\Lie{g})$, we may chose an arbitrary inner product on $\Lie{z}(\Lie{g})$ to obtain an inner product on $\Lie{g}$ for which $\Lie{g}'$ and $\Lie{z}(\Lie{g})$ are orthogonal.
Then the corresponding left-invariant Riemannian metric on $G$ is also right-invariant.
Further, if $\Lie{g}'$ is a sum of simple subalgebras $\Lie{s}_i$, then this inner product induces compatible left- and right-invariant Riemannian metrics on $G'$ and on connected normal subgroups of $G'$.

\begin{defn}
Let $G$ be a Lie group.
The commutator subgroup $G'$ of $G$ is the closed normal subgroup of $G$ generated by the set
\[
\{xyx^{-1}y^{-1} : x,y \in G\}.\]
\end{defn} 

For general Lie groups, $G'$ need not be closed, however the following holds.

\begin{prop}
Let $G$ be a compact connected Lie group.
Then the commutator subgroup $G'$ of $G$ is a closed  connected normal sub\-group of $G$ whose Lie algebra $\Lie{g}'$ is equal to $[\Lie{g},\Lie{g}]$.
To each simple ideal $\Lie{s}_i$ contained in $\Lie{g}'$ there is a closed  connected normal sub\-group $S_i$ of $G'$ whose Lie algebra is $\Lie{s}_i$.
\end{prop}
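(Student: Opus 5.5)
We need to prove three assertions: that $G'$ is closed, connected and normal with Lie algebra $[\Lie{g},\Lie{g}]$, and that each simple ideal $\Lie{s}_i$ of $\Lie{g}'$ integrates to a closed connected normal subgroup $S_i$ of $G'$. The plan is to build everything from the Lie algebra, using the decomposition $\Lie{g} = [\Lie{g},\Lie{g}] \oplus \Lie{z}(\Lie{g})$ recalled above, together with the fact that a compact semisimple Lie algebra has compact simply connected group (Weyl's theorem).

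First, $[\Lie{g},\Lie{g}]$ is an ideal, so it has a unique connected analytic normal subgroup $K$ of $G$ with this Lie algebra. Since $[\Lie{g},\Lie{g}]$ is compact and semisimple, its simply connected group $\tilde K$ is compact by Weyl's theorem. The inclusion of Lie algebras integrates to a homomorphism $\tilde K \to G$ whose image is $K$. Hence $K$ is compact, and therefore closed and connected. Normality of $K$ follows because $\operatorname{Ad}(g)$ preserves the ideal $[\Lie{g},\Lie{g}]$ for every $g \in G$.

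Next we identify $K$ with $G'$. For the inclusion $G' \subseteq K$, let $Z = Z(G)_e$ be the analytic subgroup with Lie algebra $\Lie{z}(\Lie{g})$. Then $KZ$ is a connected subgroup of $G$ whose Lie algebra is all of $\Lie{g}$, so $KZ = G$. Since $Z$ is central, each commutator $xyx^{-1}y^{-1}$ with $x = kz$ and $y = k'z'$ equals $kk'k^{-1}k'^{-1} \in K$. As $K$ is closed and normal, the closed normal subgroup generated by all commutators lies in $K$. For the reverse inclusion, write $X \in \Lie{g}'$ as a sum of brackets $[Y,W]$. The curve $t \mapsto \exp(tY)\exp(sW)\exp(-tY)\exp(-sW)$ lies in $G'$, and differentiating twice at $t=s=0$ gives $[Y,W]$. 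Because $G'$ is closed, it is a Lie subgroup by Cartan's closed subgroup theorem, and so its Lie algebra contains $[Y,W]$. Hence $\operatorname{Lie}(G') \supseteq [\Lie{g},\Lie{g}]$, and since $K$ is connected we get $K \subseteq G'$. The main subtlety here is making the second-derivative argument rigorous; I would use instead the standard formula $\exp(tY)\exp(tW)\exp(-tY)\exp(-tW) = \exp(t^2[Y,W] + O(t^3))$, together with the fact that a closed subgroup contains $\exp(X)$ whenever it contains $\exp(X/n + o(1/n))$ for all $n$.

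For the simple factors, each $\Lie{s}_i$ is an ideal of $\Lie{g}$, because $\Lie{g} = \Lie{g}' \oplus \Lie{z}(\Lie{g})$ and the centre brackets trivially. By the same Weyl argument, the analytic subgroup $S_i$ with Lie algebra $\Lie{s}_i$ is the image of a compact simply connected group. It is therefore compact, hence closed and connected, and it is normal in $G$, and so in $G'$, since $\operatorname{Ad}(G)$ preserves $\Lie{s}_i$. The only nontrivial input in the whole argument is Weyl's theorem that compact semisimple Lie algebras have compact simply connected groups. I expect this to be the main obstacle if it cannot simply be cited, and it is what guarantees closedness in both steps.
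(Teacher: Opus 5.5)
The paper gives no proof of this proposition. It is recalled as standard structure theory of compact Lie groups, and the details are left to the texts cited in the preliminaries (Knapp, Sepanski, Varadarajan).

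Your argument is correct and is essentially the textbook proof:
\begin{itemize}
\item Weyl's theorem makes the analytic subgroups $K$ (for $[\Lie{g},\Lie{g}]$) and $S_i$ (for $\Lie{s}_i$) compact, hence closed.
\item The decomposition $G = K Z(G)_e$, with $Z(G)_e$ central, puts every commutator in $K$, so $G' \subseteq K$.
\item The Lie algebra of the closed subgroup $G'$ contains all brackets, so $K \subseteq G'$.
\end{itemize}

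Two small remarks. First, the inclusion $[\Lie{g},\Lie{g}] \subseteq \operatorname{Lie}(G')$ needs no BCH estimate. Since $G'$ is closed, normal and contains all commutators, $G/G'$ is an abelian Lie group. So the differential of the quotient map $q\colon G \to G/G'$ kills $[\Lie{g},\Lie{g}]$, while $\ker \mathrm{d}q = \operatorname{Lie}(G')$.

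Second, that step, in either form, uses that $G'$ is closed. This is built into the paper's definition of $G'$ as the \emph{closed} normal subgroup generated by commutators, so your proof is complete as written. However, the paper also remarks that $G'$ need not be closed for general Lie groups, which suggests it may mean the abstract commutator subgroup. Under that reading you must add a standard fact before invoking Cartan's theorem: the abstract commutator subgroup of a connected Lie group is the analytic subgroup with Lie algebra $[\Lie{g},\Lie{g}]$. It then coincides with your $K$, and so is closed by your Weyl argument.
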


Let $G$ be a compact Lie group and $G_e$ be the connected component of $G$ that contains the identity.
Then $G_e$ is an open, closed and normal subgroup of $G$, and the quotient group $G/G_e$ is a finite discrete group.

Here is more on the structure of compact connected Lie groups (see \cite[Theorem 5.22]{Sep07}). 
\begin{thm}\label{cptconnected} Let $G$ be a compact connected Lie group, $G'$ be its commutator subgroup, and $Z(G)_e$ be the connected component of its centre $Z(G)$. 
Then $G = G'Z(G)_e$, $Z(G') = G'\cap Z(G)$ is a finite Abelian group, $Z(G)_e$ is a torus, and
\[
G \simeq [ G'\times Z(G)_e ]/F, 
\]
where $F$ is the group $G'\cap Z(G)_e$,  embedded in $G'\times Z(G)_e$ as
$\{( f, f^{-1}): f \in F\}$.
\end{thm}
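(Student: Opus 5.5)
The plan is to assemble Theorem \ref{cptconnected} from standard structure theory, taking the Lie algebra decomposition $\Lie{g} = [\Lie{g},\Lie{g}] \oplus \Lie{z}(\Lie{g})$ and the preceding proposition (that $G'$ is closed, connected and normal with Lie algebra $[\Lie{g},\Lie{g}]$) as given.

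\textbf{Step 1: $Z(G)_e$ is a torus with Lie algebra $\Lie{z}(\Lie{g})$.} $Z(G)$ is a closed subgroup of $G$, so $Z(G)_e$ is a compact connected abelian Lie group, and hence a torus. Its Lie algebra is contained in $\Lie{z}(\Lie{g})$. Conversely, for $X \in \Lie{z}(\Lie{g})$ we have $\mathrm{Ad}(\exp X) = e^{\ad X} = I$. Thus $\exp X$ commutes with $\exp \Lie{g}$, which generates the connected group $G$, so $\exp(\Lie{z}(\Lie{g})) \subseteq Z(G)_e$. Hence the Lie algebra of $Z(G)_e$ is exactly $\Lie{z}(\Lie{g})$.

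\textbf{Step 2: $G = G'Z(G)_e$ and the quotient isomorphism.} Both subgroups are closed and normal, and they commute elementwise since $Z(G)_e$ is central. Therefore $\mu\colon G'\times Z(G)_e \to G$, $(x,z)\mapsto xz$, is a continuous homomorphism. Its differential is the map $(X,Y)\mapsto X+Y$ from $[\Lie{g},\Lie{g}]\oplus\Lie{z}(\Lie{g})$ to $\Lie{g}$, which is a linear isomorphism. So the image of $\mu$ is an open subgroup of the connected group $G$, hence all of $G$; this gives $G = G'Z(G)_e$. The kernel of $\mu$ is $\{(f,f^{-1}) : f\in G'\cap Z(G)_e\}$. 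Because $\mu$ is a surjective homomorphism of compact groups, it induces a homeomorphic isomorphism $[G'\times Z(G)_e]/F \simeq G$. Finally, $F$ is discrete, since $\mu$ is a local diffeomorphism, and compact, so $F$ is finite.

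\textbf{Step 3: $Z(G') = G'\cap Z(G)$ is finite and abelian.} The inclusion $G'\cap Z(G) \subseteq Z(G')$ is clear. For the reverse, take $x \in Z(G')$. Then $x$ commutes with $G'$, and with $Z(G)_e$ because the latter is central, so $x$ commutes with $G'Z(G)_e = G$; hence $x \in Z(G)$. The group $Z(G')$ is closed with Lie algebra $\Lie{z}([\Lie{g},\Lie{g}]) = 0$, since $[\Lie{g},\Lie{g}]$ is semisimple. So $Z(G')$ is discrete, and being compact it is finite; it is abelian because it is a centre.

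The main obstacle is the input from Lie theory used above: that $[\Lie{g},\Lie{g}]$ is semisimple, so has trivial centre, and that $G'$ is closed. Both are covered by the stated preliminaries, so the rest is routine bookkeeping.
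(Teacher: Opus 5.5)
Your proof is correct. The paper gives no argument for this theorem; it quotes it from \cite[Theorem 5.22]{Sep07}. So your write-up is a self-contained replacement for that citation, and it follows the standard textbook route.

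What your version makes visible is that the theorem becomes elementary once you grant the two facts the paper records just before the statement. These are the splitting $\Lie{g} = [\Lie{g},\Lie{g}] \oplus \Lie{z}(\Lie{g})$ with $[\Lie{g},\Lie{g}]$ semisimple, and the proposition that $G'$ is closed with Lie algebra $[\Lie{g},\Lie{g}]$. After that, the multiplication map $G' \times Z(G)_e \to G$ is a homomorphism with invertible differential, hence surjective with discrete kernel. Likewise, a compact subgroup with trivial Lie algebra is finite.

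The citation buys brevity, but it hides the one genuinely deep ingredient: the closedness of the analytic subgroup with Lie algebra $[\Lie{g},\Lie{g}]$. In standard treatments this rests on a compactness result such as Weyl's theorem that compact semisimple groups have finite fundamental group. You, like the paper, take this as given.

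A small economy: once Step~3 is proved, finiteness of $F$ follows at once from $F \subseteq G' \cap Z(G) = Z(G')$. The local-diffeomorphism argument is then not needed for that point.
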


\begin{lem}\label{lem:homo-cpct-Lie-gps}
Suppose that $\sigma\colon  G\to H$ is a homomorphism from a compact connected Lie group $G$ to a Lie group $H$.
Then $\sigma(G') \subseteq H'$.
Furthermore, if $\sigma$ is surjective, then $\sigma(Z(G)_e) \subseteq Z(\sigma(G))_e$.
\end{lem}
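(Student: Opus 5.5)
The first claim follows from the fact that commutators map to commutators. Since $\sigma$ is a homomorphism, $\sigma(xyx^{-1}y^{-1}) = \sigma(x)\sigma(y)\sigma(x)^{-1}\sigma(y)^{-1}$, so $\sigma$ maps the set of commutators of $G$ into the set of commutators of $H$. Hence $\sigma$ maps the subgroup generated by commutators of $G$ into the subgroup generated by commutators of $H$, and the latter lies in $H'$. By the preceding proposition, $G'$ is the closed subgroup generated by the commutators; indeed, for compact connected $G$ the abstract commutator subgroup is already closed. If one prefers not to use that fact, continuity finishes the argument: $\sigma^{-1}(H')$ is closed, because $H'$ is closed by definition (the closed normal subgroup generated by commutators). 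It is a subgroup containing all commutators of $G$, so it contains $G'$.

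For the second claim, assume that $\sigma$ is surjective. The key step is that $\sigma$ maps $Z(G)$ into $Z(\sigma(G))$. For $z \in Z(G)$ and any $h \in \sigma(G) = H$, write $h = \sigma(x)$; then
\[
\sigma(z)h = \sigma(zx) = \sigma(xz) = h\sigma(z).
\]
Thus $\sigma(Z(G)) \subseteq Z(\sigma(G))$.

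Next comes connectivity. $Z(G)_e$ is connected and $\sigma$ is continuous, so $\sigma(Z(G)_e)$ is a connected subset of $Z(\sigma(G))$ containing the identity. It therefore lies in the identity component $Z(\sigma(G))_e$. This uses only that $Z(\sigma(G))$, which is closed, has a well-defined connected component of the identity containing every connected subset through $e$.

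I expect no real obstacle. The only point needing care is the definition of $G'$ as the closed subgroup generated by commutators, rather than the abstract one, and this is handled by the closedness of $\sigma^{-1}(H')$. Surjectivity is used only to show that central elements map to central elements; without it, one gets only $\sigma(Z(G)) \subseteq$ the centraliser of $\sigma(G)$.
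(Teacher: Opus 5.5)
Your proof is correct and follows the paper's argument: commutators map to commutators, and central elements map to central elements. It also supplies the closedness and identity-component steps that the paper leaves implicit. One small correction to your closing remark: your computation only uses $h \in \sigma(G)$, so $\sigma(Z(G)) \subseteq Z(\sigma(G))$, and hence the stated inclusion, holds even without surjectivity; surjectivity is needed only to identify $Z(\sigma(G))$ with $Z(H)$, as in the paper's conclusion $\sigma(x) \in Z(H)$.
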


\begin{proof}
Note that $\sigma(G)$ is a compact, and hence closed subgroup of $H$, and connected. 

First, $G'$ is generated by commutators $x y x^{-1}y^{-1}$ and 
\[
\sigma (x y x^{-1}y^{-1}) = \sigma(x) \sigma(y) \sigma(x)^{-1} \sigma(y)^{-1}, 
\]
so $\sigma(G') \subseteq (\sigma(G))' \subseteq H'$.
If moreover $\sigma$ is surjective, $x \in Z(G)$ and $y \in G$, then 
\[
\sigma(x)\sigma(y) = \sigma(xy) = \sigma(yx) = \sigma(y) \sigma(x) ,
\]
which shows that $\sigma(x) \in Z(H)$.
\end{proof}

\begin{lem}\label{lem:ideal in ideal}
In a compact Lie algebra, an ideal of an ideal is an ideal.
Hence in a compact connected Lie group, a normal subgroup of a normal subgroup is a normal subgroup of the whole group.
\end{lem}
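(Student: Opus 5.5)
The plan is to prove the Lie algebra statement first and then deduce the group statement from it through the Lie correspondence for connected normal subgroups.

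For the algebra statement, let $\Lie{g}$ be compact, $\Lie{h}$ an ideal of $\Lie{g}$, and $\Lie{k}$ an ideal of $\Lie{h}$.
\begin{enumerate}
\item Fix an $\operatorname{Ad}$-invariant inner product on $\Lie{g}$. For a compact Lie algebra this can be $-B$ on $[\Lie{g},\Lie{g}]$ together with any inner product on $\Lie{z}(\Lie{g})$, as described above. With respect to it every $\ad(X)$ is skew-symmetric.
\item Skew-symmetry shows that the orthogonal complement $\Lie{h}^\perp$ of $\Lie{h}$ is again an ideal. Hence $\Lie{g}=\Lie{h}\oplus\Lie{h}^\perp$ as a direct sum of ideals.
\item Since $\Lie{h}$ and $\Lie{h}^\perp$ are both ideals, $[\Lie{h}^\perp,\Lie{h}]\subseteq\Lie{h}\cap\Lie{h}^\perp=\{0\}$.
\item Take $X\in\Lie{g}$ and $Y\in\Lie{k}$, and write $X=X_1+X_2$ with $X_1\in\Lie{h}$ and $X_2\in\Lie{h}^\perp$. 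Then $[X,Y]=[X_1,Y]+[X_2,Y]=[X_1,Y]$, because $Y\in\Lie{h}$ and step 3 kills $[X_2,Y]$.
\item Finally $[X_1,Y]\in\Lie{k}$ because $\Lie{k}$ is an ideal of $\Lie{h}$. So $\Lie{k}$ is an ideal of $\Lie{g}$.
\end{enumerate}

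For the group statement, let $G$ be compact and connected, $H$ a normal subgroup of $G$, and $K$ a normal subgroup of $H$. The intended reading is closed connected normal subgroups, so that Lie algebras are available.
\begin{enumerate}
\item The Lie algebra $\Lie{h}$ of $H$ is an ideal of $\Lie{g}$, because $\operatorname{Ad}(G)$ preserves $\Lie{h}$; differentiating gives $[\Lie{g},\Lie{h}]\subseteq\Lie{h}$.
\item In the same way, the Lie algebra $\Lie{k}$ of $K$ is an ideal of $\Lie{h}$.
\item By the algebra statement, $\Lie{k}$ is an ideal of $\Lie{g}$, so $\operatorname{Ad}(\exp X)\Lie{k}=e^{\ad X}\Lie{k}\subseteq\Lie{k}$ for every $X\in\Lie{g}$.
\item Since $G$ is connected, it is generated by $\exp(\Lie{g})$, so $\operatorname{Ad}(g)\Lie{k}=\Lie{k}$ for all $g\in G$.
\item For each $g$, the subgroup $gKg^{-1}$ is connected and has Lie algebra $\operatorname{Ad}(g)\Lie{k}=\Lie{k}$. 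A connected subgroup is determined by its Lie algebra, so $gKg^{-1}=K$.
\end{enumerate}

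The step most likely to need care is the last one: the argument only works once $K$ is connected. If $K$ were allowed to be disconnected, for example a finite normal subgroup of $H$, the statement could fail, so I would state the group version explicitly for closed connected normal subgroups, as in $\Lie{N}(G)$. Everything else is routine linear algebra.
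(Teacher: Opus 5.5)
Your proof is correct, but the algebraic step takes a different route from the paper's. The paper writes $\Lie{g}=\sum_i\Lie{s}_i\oplus\Lie{z}$ and relies on a classification of ideals: every ideal of a compact Lie algebra is a sum of some of the simple ideals $\Lie{s}_i$ plus a subspace of the centre. An ideal of $\Lie{h}$ then has the same shape, and so is an ideal of $\Lie{g}$.

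You use only an $\ad$-invariant inner product. Then $\Lie{h}^\perp$ is a complementary ideal that commutes with $\Lie{h}$, so $\ad(\Lie{g})$ acts on $\Lie{h}$ through $\ad(\Lie{h})$. Your argument is more self-contained. It needs neither uniqueness of the decomposition into simple ideals nor the fact, asserted but not proved in the paper, that every ideal splits along that decomposition. It also works verbatim for any Lie algebra with an invariant inner product. The paper's route gives more information: an explicit list of all ideals, which underlies its remark that a compact semisimple Lie algebra has only finitely many ideals.

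Your group-level steps supply the details the paper dismisses as straightforward. They are stated for closed connected normal subgroups, which is the case used in the proof of Theorem~\ref{thm:codim-cond-implies-BL}.

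One correction to your closing remark: the statement does not fail for disconnected $K$, finite or otherwise. Let $H$ be a closed normal subgroup of the compact connected group $G$, and take $k\in H$ and $Y\in\Lie{h}^\perp$.
\begin{enumerate}
\item The curve $t\mapsto\exp(tY)\,k\exp(-tY)k^{-1}$ lies in $H$. Hence its velocity $Y-\operatorname{Ad}(k)Y$ at $t=0$ lies in $\Lie{h}$.
\item Since $G$ is connected, $\operatorname{Ad}(k)$ preserves every ideal, so this velocity also lies in $\Lie{h}^\perp$. It is therefore zero.
\item Consequently the analytic subgroup $C$ with Lie algebra $\Lie{h}^\perp$ centralises $H$.
\item Since $G=H_eC$, conjugation by any element of $G$ acts on $H$ as conjugation by an element of $H_e$.
\end{enumerate}
So every normal subgroup of $H$ is normal in $G$. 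Restricting to closed connected subgroups is the natural setting for your Lie-algebraic argument, but the statement itself remains true without that restriction.
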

\begin{proof}
Suppose that  $\Lie{g} = \sum_{i \in I} \Lie{s}_i \oplus \Lie{z}$, where each $\Lie{s}_i$ is the simple Lie subalgebra of $\Lie{g}$ and $\Lie{z}$ is the center of $\Lie{g}$.
If $\Lie{h}$ is an ideal in $\Lie{g}$, then necessarily $\Lie{h} = \sum_{i \in I'} \Lie{s}_i \oplus \Lie{z}'$, where $I' \subset I$ and $\Lie{z}' \subseteq \Lie{z}$. 
An ideal $\Lie{j}$ of $\Lie{h}$ is necessarily of the form  $\sum_{i \in I''} \Lie{s}_i \oplus \Lie{z}''$, where $I'' \subseteq I$ and $\Lie{z}'' \subseteq \Lie{z}'$ and so is also an ideal in $\Lie{g}$.

The passage from algebras to groups is straightforward.
\end{proof}

By using the structure theory of compact Lie groups, we can reduce the question of the finiteness of the Brascamp--Lieb constant $\BL(G,\bss,\bsp)$ to that of the finiteness of the Brascamp--Lieb constants $\BL(G_c,\bss|_{G_c} ,\bsp)$.
We can also give a necessary and sufficient criterion for the the finiteness of the Brascamp--Lieb constant $\BL(G,\bss,\bsp)$, based on dimensions of subspaces, which is very similar to the criterion of Bennett and Jeong \cite{BJ22}.

We now repeat the statement of Theorem B for the reader's convenience.

\begin{thmB}
Suppose that  $(G, \bss,\bsp)$ is a canonical Brascamp--Lieb datum of compact Lie groups. 
Let $\phi_j$ be the restriction of $\sigma_j$ to $G_e$ with codomain $(G_j)_e$. 
Then  $\BL(G, \bss, \bsp)$ is finite if and only if $\BL(G_e, \bsphi, \bsp)$ is finite.
\end{thmB}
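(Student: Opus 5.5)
Both directions come from the open-subgroup and quotient machinery of Section~\ref{sec:BL-general-groups}, using that $G_e$ is open and normal in $G$ with $G/G_e$ finite. Since the datum is canonical and each $\sigma_j$ may be taken open and surjective, $\sigma_j(G_e)$ is an open, connected subgroup of $G_j$ that contains $e$. Hence $\sigma_j(G_e)$ is a connected subgroup that is both open and closed, so it equals $(G_j)_e$. Consequently $\phi_j\colon G_e\to (G_j)_e$ is a well-defined surjective homomorphism, $(G_e,\bsphi,\bsp)$ is again a Brascamp--Lieb datum of compact Lie groups, and $\bsphi$ is proper because everything is compact.

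\emph{Necessity.} Suppose $\BL(G,\bss,\bsp)$ is finite. Apply Lemma~\ref{lem:BL-open-sbgp} with $G_0=G_e$ and $H_j=(G_j)_e$; both are open subgroups and $\sigma_j(G_e)\subseteq (G_j)_e$. With the standard open subgroup normalisations this gives $\BL(G_e,\bsphi,\bsp)\le \BL(G,\bss,\bsp)<\infty$. Finiteness does not depend on the choice of Haar measures, so nothing more is needed here.

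\emph{Sufficiency.} Suppose $\BL(G_e,\bsphi,\bsp)$ is finite. Apply Theorem~\ref{thm:breaking} with $N=G_e$. This is a closed normal subgroup, and each $\sigma_j(N)=(G_j)_e$ is closed and normal in $G_j$. The theorem gives
\[
\BL(G,\bss,\bsp)\le \BL(G_e,\bsphi,\bsp)\,\BL(G/G_e,\bsds,\bsp).
\]
The quotient datum consists of $G/G_e$ and the groups $G_j/(G_j)_e$, which are all finite discrete groups, and as remarked after the definition of the constant, the Brascamp--Lieb constant is finite for finite groups. To justify this directly, take counting measure on each finite group. For each $j$ we have $|f_j(y)|\le \|f_j\|_{L^{p_j}}$ for every $y$, so the left side of \eqref{eq:usual} is at most $|G/G_e|\prod_j\|f_j\|_{L^{p_j}}$. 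Hence the product on the right is finite. The Haar measure normalisations required by Theorem~\ref{thm:breaking} only rescale the constants by finite positive factors, so this does not affect finiteness.

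The only step needing care is identifying $\sigma_j(G_e)$ with $(G_j)_e$, together with the normality and closedness hypotheses of Theorem~\ref{thm:breaking}. This rests on the standing reduction, from Lemma~\ref{lem:bss-must-be-proper} and the passage to canonical data, that each $\sigma_j$ is open and surjective. If some $p_j=\infty$, Proposition~\ref{prop:pk=infty} lets us drop that index first, so this case causes no difficulty.
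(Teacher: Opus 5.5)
Your proof is correct and is essentially the paper's: Lemma \ref{lem:BL-open-sbgp} with $G_0=G_e$ gives necessity. Sufficiency comes from the splitting inequality $\BL(G,\bss,\bsp)\le \BL(G_e,\bsphi,\bsp)\,\BL(G/G_e,\bsds,\bsp)$ together with finiteness of the constant for finite groups. You are slightly more careful than the paper: you attribute that inequality to Theorem \ref{thm:breaking} (the paper cites Lemma \ref{lem:BL-cpct-qtnt}) and you check that $\sigma_j(G_e)=(G_j)_e$. For sufficiency, though, get the openness of $\sigma_j$ by applying Lemma \ref{lem:bss-must-be-proper} to the finite constant $\BL(G_e,\bsphi,\bsp)$, not to $\BL(G,\bss,\bsp)$, whose finiteness is what you are proving.
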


\begin{proof}
By Lemma \ref{lem:BL-open-sbgp}, if the Haar measure of $G_e$ is the restriction to $G_e$ of that of $G$, then
\[
\BL(G_e, \bsphi, \bsp) \leq \BL(G, \bss, \bsp),
\]
while by Lemma \ref{lem:BL-cpct-qtnt}, if the Haar measures of $G$, $G_e$ and $G/G_e$ as well as the Haar measures of $G_j$, $(G_j)_e$ and $G_j/ (G_j)_e$ are normalised to be $1$ for all $j$, then
\begin{equation}\label{eq:BL-cpct-gp}
\BL(G, \bss, \bsp) \leq \BL(G_e, \bsphi, \bsp) \BL(G/G_e , \bsds, \bsp),
\end{equation}
where $\dot\sigma_j$ is the induced homomorphism of $G/G_e$ into $G_j/(G_j)_e$.
Now $G/G_e$ and all $G_j/\sigma_j(G_e)$ are all finite groups, and so $\BL(G/G_e , \bsds, \bsp)$ is finite.

We note that the requirements of Lemma \ref{lem:BL-open-sbgp} and of Lemma \ref{lem:BL-cpct-qtnt} are incompatible, but from the point of view of finiteness of the Brascamp--Lieb constant, the normalisations of the Haar measures are irrelevant.
\end{proof}

For  Brascamp--Lieb data involving compact groups,	there is more to say.
As usual, we consider canonical data.

On a compact group $G$, $L^q(G) \subseteq L^p(G)$ when $1 \leq p \leq q \leq \infty$.
Hence if $\BL(G,\bss,\bsp)$ is finite and $q_j \geq p_j$ for all $j$, then $\BL(G,\bss,\bsq)$ is also finite.
This fact must be reflected in any necessary and sufficient condition for finiteness of the Brascamp--Lieb constant.
As we shall see, the natural generalisation of the finiteness condition \eqref{eq:codimension-condition} found by Bennett and Jeong for compact abelian Lie groups are the \emph{codimension conditions}
\begin{equation}\label{eq:codimension-condition-v2}
\sum_j \frac{1}{p_j} \lpar \dim(\sigma_j(G))  - \dim( \sigma_j (N) ) \rpar  \leq \dim(G) - \dim(N) 
\qquad\forall N \in \Lie{N}(G),
\end{equation}
where $\Lie{N}(G)$ is the collection of all closed connected normal subgroups of $G$, including $\{e\}$ but excluding $G$ itself (since the corresponding condition is trivial), and 
\begin{equation}\label{eq:codimension-condition-v3}
\sum_j \frac{1}{p_j} \lpar \dim(\mathrm{d}\sigma_j(\Lie{g}))  - \dim( \mathrm{d}\sigma_j (\Lie{n}) ) \rpar  
\leq \dim(\Lie{g}) - \dim(\Lie{n}) 
\qquad\forall \Lie{n} \in \Lie{I}(\Lie{g}),
\end{equation}
where $\Lie{I}(\Lie{g})$ is the collection of all ideals of $\Lie{g}$, including $\{0\}$ but excluding $\Lie{g}$ itself (since the corresponding condition is trivial).
Condition \eqref{eq:codimension-condition-v3} implies condition \eqref{eq:codimension-condition-v2}, since the Lie algebra of a closed normal subgroup is an ideal, but the converse is not immediate.
It follows from Theorems \ref{thm:BL-implies-codim-cond} and \ref{thm:codim-cond-implies-BL} below that in fact \eqref{eq:codimension-condition-v2} implies condition \eqref{eq:codimension-condition-v3}.

When $N$ is the subgroup $\{e\}$, condition \eqref{eq:codimension-condition-v2} implies that
\begin{equation*}
\sum_j \frac{1}{p_j} \dim(\sigma_j(G))   \leq \dim(G)  .
\end{equation*}
However, we are dealing with canonical data, so $G$ is a closed subgroup of $G_1 \times \dots \times G_J$, and 
\begin{equation*}
\dim(G)  \leq \sum_j \dim(\sigma_j(G)) .
\end{equation*}
It follows that, if all the codimension inequalities of \eqref{eq:codimension-condition-v2} are strict, we may decrease one or more of the $p_j$, maintaining the conditions $1\leq p_j \leq \infty$, until either all $p_j$ are equal to $1$, or at least one of the inequalities is an equality.
When all $p_j$ are equal to $1$, $G$ is equal to the product group $G_1 \times \dots \times G_J$ and the Brascamp--Lieb inequality becomes a consequence of the fact that the Haar measure of $G$ is the product of the Haar measures of the $G_j$.
So there is no loss of generality in considering the Brascamp--Lieb inequalities when at least one of the inequalities \eqref{eq:codimension-condition-v2} is an equality.

\begin{thm}\label{thm:BL-implies-codim-cond}
Suppose that  $(G, \bss, \bsp)$ is a canonical Brascamp--Lieb datum of compact Lie groups. 
If $\BL(G, \bss, \bsp)$ is finite, then the codimension condition \eqref{eq:codimension-condition-v3} holds.
\end{thm}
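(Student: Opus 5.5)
Fix an ideal $\Lie{n} \in \Lie{I}(\Lie{g})$ and test the inequality on functions that are approximate indicator functions of thin tubular neighbourhoods of the cosets of the analytic normal subgroups $\sigma_j(N)$ in $G_j$. Here $N = \exp(\Lie{n})$ is the connected normal subgroup with Lie algebra $\Lie{n}$, which need not be closed.

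By Theorem B and Lemma \ref{lem:BL-open-sbgp}, finiteness passes to the identity components. So we may assume that $G$ and all $G_j = \sigma_j(G)$ are connected and that $\mathrm{d}\sigma_j(\Lie{g}) = \Lie{g}_j$. Choose the bi-invariant Riemannian metrics described above. Use the decomposition $\Lie{g} = \sum_i \Lie{s}_i \oplus \Lie{z}$ from the proof of Lemma \ref{lem:ideal in ideal}, and write $\Lie{n} = \sum_{i\in I'} \Lie{s}_i \oplus \Lie{z}'$. Each $\mathrm{d}\sigma_j(\Lie{n})$ is then an ideal of $\Lie{g}_j$, and $\Lie{g}_j = \mathrm{d}\sigma_j(\Lie{n}) \oplus \mathrm{d}\sigma_j(\Lie{n})^\perp$ with the complement an ideal.

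For small $\delta>0$, let $\Omega_\delta = \{ \exp(X) \, y : X \in \Lie{n}^\perp, |X| < \delta, y \in N\}$, and similarly let $E_{j,\delta} = \{\exp(Y)\,y_j : Y \in \mathrm{d}\sigma_j(\Lie{n})^\perp, |Y| < C\delta, y_j \in \sigma_j(N)\}$. Take $f_j = 1_{E_{j,\delta}}$, or continuous majorants of it, which only improve the inequality. Since $\sigma_j(\exp X) = \exp(\mathrm{d}\sigma_j X)$ and $|\mathrm{d}\sigma_j X| \le C|X|$, we get $\sigma_j(\Omega_\delta) \subseteq E_{j,\delta}$. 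The left side of \eqref{eq:usual} is therefore at least $|\Omega_\delta|$, and the right side is $\BL \cdot \prod_j |E_{j,\delta}|^{1/p_j}$.

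The key estimates are $|\Omega_\delta| \gtrsim \delta^{\dim\Lie{g} - \dim\Lie{n}}$ and $|E_{j,\delta}| \lesssim \delta^{\dim\Lie{g}_j - \dim \mathrm{d}\sigma_j(\Lie{n})}$. Letting $\delta \to 0$ in $\delta^{\dim\Lie{g}-\dim\Lie{n}} \lesssim \prod_j \delta^{(\dim \Lie{g}_j - \dim\mathrm{d}\sigma_j(\Lie{n}))/p_j}$ gives \eqref{eq:codimension-condition-v3}.

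The main obstacle is the volume estimates when $N$ or $\sigma_j(N)$ is not closed, which can only happen through the central torus part $\Lie{z}'$, since the semisimple parts give closed subgroups. If the analytic subgroup is dense, the "tube" $E_{j,\delta}$ is all of $G_j$, and the upper bound fails. The fix I would try is to use only long finite pieces. Replace $N$ by $N_R = \exp(B_R)$, the image of the ball of radius $R$ in $\Lie{n}$ (more precisely, the product of a compact semisimple part and a ball of radius $R$ in $\Lie{z}'$), and define $\Omega_{\delta,R}$ and $E_{j,\delta,R}$ analogously.

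For fixed $R$ and small $\delta$, the map $(X,Y)\mapsto \exp(X)\exp(Y)$ is then locally a diffeomorphism with Jacobian near $1$. Covering arguments should give $|\Omega_{\delta,R}| \ge c\, R^{\dim\Lie{z}'}\,\delta^{\dim\Lie{g}-\dim\Lie{n}}$, using injectivity for $\delta \ll \delta_0(R)$, and $|E_{j,\delta,R}| \le C\, R^{\dim\mathrm{d}\sigma_j(\Lie{z}')}\,\delta^{\dim\Lie{g}_j-\dim\mathrm{d}\sigma_j(\Lie{n})}$, where the upper bound comes from a union bound with no injectivity needed.

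Fix $R$ large and send $\delta\to0$. Comparing powers of $\delta$ gives the desired inequality, because the $R$-factors are constants. We need $\delta$-independence only, not uniformity in $R$, so a single fixed $R=1$ actually suffices. This reduces the whole argument to a local computation near the identity, essentially the linear BCCT-type codimension test on the abelian datum $(\Lie{g},\bsDs,\bsp)$ restricted to subspaces that are ideals. Indeed, I would present it via Theorem \ref{thm:G-loc-g-loc}: finiteness of $\BL(G,\bss,\bsp)$ gives finiteness of $\BL^{\loc}(\Lie{g},\bsDs,\bsp)$ by Proposition \ref{prop:trivial-estimate}. Testing the local euclidean inequality on indicators of products of a unit ball in $\Lie{n}$ with a $\delta$-ball in $\Lie{n}^\perp$, and correspondingly in $\Lie{g}_j$, yields \eqref{eq:codimension-condition-v3} as $\delta \to 0$.
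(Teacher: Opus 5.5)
Your argument is correct. Its group-level form is essentially the paper's proof. The paper tests on $f_j = 1_{\sigma_j(B(e_G,r)B)}$ with $B=\exp(\Lie{b})$ for a bounded ball $\Lie{b}\subset\Lie{n}$, which is exactly your device for sidestepping non-closed $N$ and $\sigma_j(N)$. It then simply asserts the volume asymptotics that your covering argument justifies.

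The version you say you would present, via Proposition \ref{prop:trivial-estimate} and Theorem \ref{thm:G-loc-g-loc}, is a genuinely different route. It moves the test to the local Euclidean datum $(\Lie{g},\bsDs,\bsp)$. There the box volumes are exact, closedness never enters, and nothing depends on $\Lie{n}$ being an ideal, so you get the inequality for every subspace of $\Lie{g}$. The price is reliance on Theorem \ref{thm:G-loc-g-loc}, whose proof the paper only sketches. You need only the direction $\BL^{\loc}(\Lie{g},\bsDs,\bsp)\lesssim\BL^{\loc}(G,\bss,\bsp)$, and for nonnegative inputs this comes down to $\int_{\exp_G(U)}\le\int_G$. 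Finiteness of the local constant only controls inputs supported in some small $\bsU$. Your ``unit'' box is still legitimate, because for a linear datum $\BL(\Lie{g},\bsDs,\bsp;t\bsU)=t^{\dim\Lie{g}-\sum_j\dim\Lie{g}_j/p_j}\,\BL(\Lie{g},\bsDs,\bsp;\bsU)$.

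There is one slip in the group version. $\mathrm{d}\sigma_j(\Lie{n}^\perp)$ need not lie in $\mathrm{d}\sigma_j(\Lie{n})^\perp$; for instance, take $\sigma_j(x,y)=x+y$ on $\mathbb{T}^2$ and $\Lie{n}=\R\times\{0\}$. So the containment $\sigma_j(\Omega_{\delta,R})\subseteq E_{j,\delta,R}$ does not follow from $|\mathrm{d}\sigma_jX|\le C|X|$, and with $N_R$ in place of $N$ it can actually fail. There are two easy fixes:
\begin{itemize}
\item split $\mathrm{d}\sigma_jX$ along the commuting ideals $\mathrm{d}\sigma_j(\Lie{n})$ and $\mathrm{d}\sigma_j(\Lie{n})^\perp$, and enlarge $R$ slightly; or
\item take $f_j$ to be the indicator of $\sigma_j(\Omega_{\delta,R})$ itself, as the paper does, and bound its measure by that of the $C\delta$-neighbourhood of $\sigma_j(N_R)$.
\end{itemize}
Also, the lower bound on $|\Omega_{\delta,R}|$ needs no injectivity: a small local piece near the identity already gives $|\Omega_{\delta,R}|\gtrsim\delta^{\dim\Lie{g}-\dim\Lie{n}}$.
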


\begin{proof}
By Theorem B, we may assume that $G$ and all $G_j$ are connected.

As $G$ is a Lie group, we may equip it with a Riemannian metric; all such metrics are equivalent.
We equip the $G_j$ with the quotient metrics.

Take the indicator functions of the Riemannian balls $B_j(e_j,r)$ with centre $e_j$ and radius $r$ in $G_j$ in as the test functions $f_j$ in the Brascamp--Lieb inequality:
\begin{equation}\label{eq:BL-ineq-again}
\int_G \prod_j (f_j \circ \sigma_j)(x) \wrt x 
\leq C \prod_j \bigglpar \int_{G_j}  \labs f_j(x_j) \rabs^{p_j} \wrt x_j \biggrpar^{1/p_j} .
\end{equation}
If $\gamma$ is a curve of length $r$ in $G$, then its projection onto each of the $G_j$ is of length at most $r$, so that $\sigma_j(x) \in B_j(e_j, r)$ for all $x \in B(e_G, r)$.
Thus the left hand side of the inequality is at least $| B(e_G, r) |$, which, for small $r$, behaves like a positive multiple of $r^{\dim(G)}$.
Similarly, the right hand side behaves like a positive multiple of $\prod_j r^{\dim(G_j)/p_j}$.

If the inequality holds, then for all small $r$,
\[
r^{\dim(G)} \lesssim  r^{\sum_j \dim(G_j)/p_j},
\]
so that
\[
\sum_j \frac{1}{p_j} \dim(G_j) \leq \dim(G) .
\]

Let $N$ be a normal analytic subgroup of $G$, fix a large relatively compact open ball $\Lie{b}$ in $\Lie{n}$, and let $B$ be the relatively compact open set $\exp(\Lie{b})$ in $N$.
Then $|B(e_G,r) B|_G$, the Haar measure of the open set $B(e_G, r) B$ in $G$, behaves like $r^{\dim(\Lie{g}) - \dim(\Lie{n})}$ as $r \to 0$, while $|\sigma_j(B(e_G,r) B)|_{G_j}$, the Haar measure of the open set $\sigma_j(B(e_G, r)) \sigma_j(B)$ in $G_j$, behaves like $r^{\dim(\mathrm{d}\sigma_j(\Lie{g})) - \dim(\mathrm{d}\sigma_j(\Lie{n}))}$ as $r \to 0$.

Take $f_j$ to be the characteristic function of $\sigma_j(B(e_G,r) B)$.
If $x \in B(e_G,r) B$, then $f_j(\sigma_j(x)) = 1$ and so
\[
\begin{aligned}
r^{\dim(\Lie{g}) - \dim(\Lie{n})}
&\lesssim |B(e_G,r) B|_G \\
&\lesssim \int_G \prod_j (f_j \circ \sigma_j)(x) \wrt x \\
&\lesssim  \prod_j \bigglpar \int_{G_j}  \labs f_j(x_j) \rabs^{p_j} \wrt x_j \biggrpar^{1/p_j} \\
&\lesssim \prod_j r^{ (\dim(\mathrm{d}\sigma_j(\Lie{g})) - \dim(\mathrm{d}\sigma_j(\Lie{n}))) /p_j} ,
\end{aligned}
\] 
whence
\[
\sum_j \frac{1}{p_j} (\dim(\mathrm{d}\sigma_j(\Lie{g})) - \dim(\mathrm{d}\sigma_j(\Lie{n}) )) 
\leq \dim(\Lie{g}) - \dim(\Lie{n})   ,
\]
which gives \eqref{eq:codimension-condition-v2}.
\end{proof}

This theorem justifies the next definition.

\begin{defn}
Define the Brascamp--Lieb polytope $\Polytope$ to be the set of all 
$(\sfrac{1}{p_1}, \dots , \sfrac{1}{p_J})$ in $\R^J$ that satisfy the inequalities
\begin{equation}\label{eq:def-BL-polytope}
\begin{gathered}
\sum_j \frac{1}{p_j} \lpar \dim(\sigma_j(G))  - \dim( \sigma_j (N) ) \rpar  \leq \dim(G) - \dim(N) 
\\
0 \leq \frac{1}{p_j} \leq 1 ,
\end{gathered}
\end{equation}
where $N$ is a closed connected normal subgroup of $G$ other than $G$ and $1 \leq j \leq J$.
\end{defn}

Although there may be infinitely many closed connected normal subgroups $N$ of $G$ to consider, all the dimensions that appear in \eqref{eq:def-BL-polytope} are integers between $0$ and $d$, so $\Polytope$ is defined by finitely many inequalities, and hence is the convex hull of its finitely many extreme points, by the Weyl--Minkowski theorem.

We begin by considering the finiteness of the constant.

\begin{thm}\label{thm:codim-cond-implies-BL}
Suppose that $(G, \bss,\bsp)$ is a canonical Brascamp--Lieb datum of compact Lie groups.
If the codimension condition \eqref{eq:codimension-condition-v2} holds, then $\BL(G, \bss, \bsp)$ is finite.
If $G$ is connected and the Haar measures of $G$ and the $G_j$ are all normalised to be $1$, then 
\[
\BL(G, \bss, \bsp)=1.
\]
\end{thm}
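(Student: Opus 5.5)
The lower bound $\BL(G,\bss,\bsp)\geq 1$ is immediate when all Haar measures are normalised: take every $f_j\equiv 1$. All the work is in the upper bound $\BL(G,\bss,\bsp)\leq 1$ for connected $G$; finiteness in general then follows from Theorem B. Theorem B needs the codimension condition only for $G_e$, and this is the same condition, since $\Lie{N}(G)=\Lie{N}(G_e)$ and dimensions are unchanged. I plan an induction on $\dim(G)$ combined with multilinear interpolation over the polytope $\Polytope$.

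\emph{Step 1 (monotonicity and interpolation).} With normalised measures, $\norm{f}_{L^p}$ is nondecreasing in $p$. Hence if the inequality holds with constant $1$ at $\bsp$, it holds with constant $1$ at every $\bsq\geq\bsp$. Moreover, multilinear Riesz--Thorin shows that $\log\BL$ is convex in $(1/p_1,\dots,1/p_J)$. Since $\Polytope$ is the convex hull of finitely many extreme points, it suffices to prove $\BL(G,\bss,\bsp)\leq 1$ at each vertex of $\Polytope$. At a vertex, each $1/p_j$ either lies in $\{0,1\}$ or is pinned by active codimension equalities. Any $p_j=\infty$ is removed using Proposition \ref{prop:pk=infty}, and each removal does not change the constant.

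\emph{Step 2 (breaking along an active normal subgroup).} Suppose equality holds in \eqref{eq:codimension-condition-v2} for some $N\in\Lie{N}(G)$ with $N\neq\{e\}$. I apply Theorem \ref{thm:breaking}, with the standard compact normalisations making every relevant group have measure $1$. This gives $\BL(G,\bss,\bsp)\leq \BL(N,\bss|_N,\bsp)\,\BL(G/N,\bsds,\bsp)$, and both factors have smaller dimension. I then check that the codimension condition passes to each factor.
\begin{itemize}
\item For $G/N$: closed connected normal subgroups are $M/N$ with $M\supseteq N$, and the inequality for $M/N$ is literally the inequality for $M$ in $G$.
\item For $N$: by Lemma \ref{lem:ideal in ideal}, a closed connected normal subgroup $K$ of $N$ is normal in $G$. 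Subtracting the equality for $N$ from the inequality for $K$ yields
\[
\sum_j \frac{1}{p_j}\bigl(\dim\sigma_j(N)-\dim\sigma_j(K)\bigr)\leq \dim N-\dim K .
\]
\end{itemize}
Both factors need not be connected or canonical; I handle that with Theorem B together with the lemma on passage to canonical data. The induction hypothesis then gives constant $1$ for each factor.

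\emph{Step 3 (vertices with no such $N$).} If some $p_k=1$, I use Proposition \ref{prop:pk=1} to pass to $\ker(\sigma_k)$. This requires two things: that $\sigma_j(\ker\sigma_k)$ is open in $G_j$, and that the codimension condition holds on the kernel. I expect both to follow from the inequality for $N=(\ker\sigma_k)_e$, using $\dim G-\dim N=\dim G_k$. If all $p_j=1$, then $\dim G=\sum_j\dim G_j$ forces $G=\bsG$ by connectedness, and Fubini gives the constant $1$.

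The main obstacle is the remaining case: the only active codimension equality is the one for $N=\{e\}$, namely $\sum_j \dim(G_j)/p_j=\dim G$, while some $1/p_j$ lie strictly in $(0,1)$. I would first try to show that such a point cannot be a vertex unless $G$ is abelian, or unless an equality at some other normal subgroup is forced. The abelian case is covered by Bennett--Jeong \cite{BJ22}, whose constant is $1$ for connected tori. Failing that, I would reduce through the decomposition $G\simeq [G'\times Z(G)_e]/F$ of Theorem \ref{cptconnected}. The semisimple part has only finitely many normal subgroups, each a product of simple factors $S_i$, so vertex equalities there can be written down explicitly and decomposed along the $S_i$.
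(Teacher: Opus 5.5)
Your Steps 1--3 follow the paper's proof closely. The shared ingredients are the induction, the Riesz--Thorin reduction to vertices of $\Polytope$, splitting along an active $N$ via Theorem~\ref{thm:breaking} with the subtraction argument for $N$, and removal of indices with $p_j\in\{1,\infty\}$ via Propositions~\ref{prop:pk=infty} and~\ref{prop:pk=1}. Your check that $\sigma_j(\ker\sigma_k)$ is open, and that the codimension condition descends to the kernel, is correct and more careful than the paper's.

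The gap is the case you call the main obstacle. You leave it open and propose to attack it with machinery you do not carry out: Bennett--Jeong for the torus part, and a decomposition of the semisimple part along the $S_i$. None of this is needed. Your suggestion that such a point might be a vertex when $G$ is abelian is also misdirected, because the obstruction is purely combinatorial. A vertex of $\Polytope\subset\R^J$ is cut out by at least $J$ linearly independent active constraints. Suppose no codimension inequality is an equality for any $N$ with $0<\dim N<\dim G$. Then the only codimension constraint that can be active is the one for $N=\{e\}$, since a closed connected subgroup of dimension $0$ is trivial, and it contributes rank at most $1$. Hence at least $J-1\geq 1$ of the box constraints $0\leq 1/p_j\leq 1$ are active, so some $p_j\in\{1,\infty\}$. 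You are then back in the reductions you already have. A vertex at which only $N=\{e\}$ is active and all $1/p_j\in(0,1)$ does not exist when $J\geq 2$, whatever $G$ is. This is exactly how the paper disposes of the case.

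To close the argument you must also adjust the induction. Removing an index with $p_k=\infty$, or with $p_k=1$ and $G_k$ trivial, does not lower $\dim G$. So induct on the pair $(\dim G, J)$ lexicographically, as the paper does. Alternatively, note that $\Polytope\cap\{1/p_k=0\}$ is a face of $\Polytope$. The reduced point is therefore again a vertex of the reduced polytope, and the case analysis can be rerun.
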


\begin{proof}
By Theorem B, there is no loss of generality in supposing that $G$ and all $G_j$ are connected.
We may and shall suppose that the Haar measures of $G$, $G_e$ and $G/G_e$ are all normalised to be $1$.
It will suffice to show that $\BL(G,\bss,\bsp) \leq 1$, since testing on constant functions shows that $\BL(G,\bss,\bsp) \geq 1$,

We shall use induction on the dimension $d$ of $G$, and, for a fixed $d$, on the number $J$ of homomorphisms under consideration.
When $d = 0$ or $J= 1$, the result is obvious, so we assume that $d >0$ and $J > 1$ in what follows.

We suppose that the theorem is proved for all connected compact Lie groups of dimension less than $d$, and for all groups of dimension $d$ when the number of homomorphisms is at most $J-1$.

By the multilinear Riesz--Thorin theorem, it will suffice to show that for any extreme point $(\sfrac{1}{p_1},\dots , \sfrac{1}{p_J})$ of the polytope $\Polytope$, the Brascamp--Lieb constant is at most $1$. 
At every extreme point of $\Polytope$, at least two of the inequalities of \eqref{eq:def-BL-polytope} are equalities.

We consider two cases.
Suppose first that one of the codimension inequalities is an equality for some closed connected normal subgroup $N$ such that $0 < \dim N < \dim G$.
We normalise the Haar measures on $N$ and $G/N$ to be probability measures.

On the one hand, by Lemma \ref{lem:ideal in ideal}, the closed connected normal subgroups of $N$ are precisely the closed normal subgroups $N_1$ of $G$ that are subgroups of $N$.
Further, for such a subgroup $N_1$ of $N$,
\[
\sum_j \frac{1}{p_j} \lpar \dim(\sigma_j(G))  - \dim( \sigma_j (N_1) ) \rpar  \leq \dim(G) - \dim(N_1) 
\]
by \eqref{eq:codimension-condition-v2}, and
\[
\sum_j \frac{1}{p_j} \lpar \dim(\sigma_j(G))  - \dim( \sigma_j (N) ) \rpar = \dim(G) - \dim(N) 
\]
by hypothesis, whence
\[
\sum_j \frac{1}{p_j} \lpar \dim(\sigma_j(N))  - \dim( \sigma_j (N_1) ) \rpar  \leq \dim(N) - \dim(N_1) .
\]
Thus the codimension condition holds with $N$ in place of $G$ and $\bss|_N$ in place of $\bss$, and therefore $\BL(G, \bss|_N, \bsp) \leq 1$ by the inductive hypothesis.

On the other hand, the closed connected normal subgroups of $G/N$ are exactly the subgroups $N_1/N$, where $N_1$ is a closed connected normal subgroup of $G$ that contains $N$. 
Further, $\sigma_j$ induces a homomorphism $\dot\sigma_j$ from $G/N$ to $\sigma_j(G)/\sigma_j(N)$.
A similar argument shows that, for such $N_1$,
\[
\sum_j \frac{1}{p_j} \lpar \dim(\dot\sigma_j(G/N))  - \dim( \dot\sigma_j (N_1/N) ) \rpar  
\leq \dim(G/N) - \dim(N_1/N) ,
\]
and the codimension condition holds with $G/N$ in place of $G$ and $\bsds$ in place of $\bss$.
Therefore $\BL(G/N, \bsds, \bsp) \leq 1$ by the inductive hypothesis.

Since
\[
\BL(G, \bss, \bsp) \leq \BL(G, \bss|_N, \bsp) \BL(G/N, \bsds, \bsp), 
\]
it follows that $\BL(G, \bss, \bsp) \leq 1$ in this case.

Suppose now that none of the codimension inequalities is an equality for any $N$ such that  $0 < \dim(N) < \dim(G)$.
In this case, since we are dealing with an extreme point of $\Polytope$ and at least two of the defining inequalities hold, there must be at least one $j$ such that $1/p_j = 0$ or $1/p_j = 1$.
In this case, by appealing to Proposition \ref{prop:pk=infty} or Proposition \ref{prop:pk=1}, we may reduce the number of homomorphisms by $1$, and again appeal to the inductive hypothesis.
This proves the theorem.
\end{proof}

We note that Theorems \ref{thm:BL-implies-codim-cond} and \ref{thm:codim-cond-implies-BL} combine to prove Theorem C, which we recall.

\begin{thmC}
Suppose that $(G,\bss,\bsp)$ is a canonical datum of compact Lie groups.
If the codimension condition \eqref{eq:codimension-condition-gp} holds, then $\BL(G,\bss,\bsp)$ is finite, while if $\BL(G,\bss,\bsp)$ is finite, then the codimension condition \eqref{eq:codimension-condition} holds.
If $G$ is connected, the Haar measures of $G$ and the $G_j$ are all normalised to be $1$, and $\BL(G,\bss,\bsp)$ is finite, then 
\[
\BL(G, \bss, \bsp)=1.
\]
\end{thmC}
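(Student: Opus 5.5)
Theorem C follows by assembling Theorems \ref{thm:BL-implies-codim-cond} and \ref{thm:codim-cond-implies-BL}, plus a short bookkeeping step on the domains over which the two codimension conditions are tested.

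\emph{Sufficiency.} I will identify \eqref{eq:codimension-condition-gp} with \eqref{eq:codimension-condition-v2}. For a closed connected normal subgroup $H$, the image $\sigma_j(H)$ is compact and connected, so it is a closed connected subgroup of $G_j$. Its dimension equals $\dim(\mathrm{d}\sigma_j(\Lie{h}))$. In the canonical datum with surjective open $\sigma_j$, we have $\sigma_j(G)=G_j$ up to the identity component. Hence the left hand sides of \eqref{eq:codimension-condition-gp} and \eqref{eq:codimension-condition-v2} agree. Including or excluding the trivial case $H=G$ changes nothing. Theorem \ref{thm:codim-cond-implies-BL} then gives finiteness of $\BL(G,\bss,\bsp)$.

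\emph{Necessity.} If $\BL(G,\bss,\bsp)$ is finite, Theorem \ref{thm:BL-implies-codim-cond} gives \eqref{eq:codimension-condition-v3}. This coincides with \eqref{eq:codimension-condition}, since $\mathrm{d}\sigma_j(\Lie{g})$ is the Lie algebra of $\sigma_j(G)$, and the case $\Lie{h}=\Lie{g}$ is trivial.

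\emph{The value of the constant.} When $G$ is connected and all Haar measures are probability measures, finiteness gives \eqref{eq:codimension-condition} by necessity. Since the Lie algebra of each $H\in\Lie{N}(G)$ is an ideal, this implies \eqref{eq:codimension-condition-gp}. The second assertion of Theorem \ref{thm:codim-cond-implies-BL} then yields $\BL(G,\bss,\bsp)=1$. The lower bound $\geq 1$ also comes directly from testing on constant functions, which have norm $1$ in every $L^{p_j}$.

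The statement as recalled requires all $G_j$ connected, whereas Theorem \ref{thm:codim-cond-implies-BL} is phrased with $G$ connected. If only the $G_j$ are assumed connected, I would note the following. Since $G\subseteq\bsG$ is closed and $\bsG$ is connected, $G/G_e$ is finite, and each $\sigma_j(G_e)$ is open in the connected group $G_j$, so $\sigma_j(G_e)=G_j$. With normalised measures, the reduction in Theorem B (via Lemma \ref{lem:BL-cpct-qtnt} and \eqref{eq:BL-cpct-gp}) would then bound the constant by that of $G_e$ times the finite-group factor. That factor is computed on the trivial quotient groups $G_j/(G_j)_e$, so it equals $1$ by the constants test.

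The main obstacle is this normalisation bookkeeping, together with the identification of the two dimension counts. The analytic content is entirely contained in the two earlier theorems.
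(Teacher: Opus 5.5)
Your proposal is correct and follows the paper's proof, which simply combines Theorems \ref{thm:BL-implies-codim-cond} and \ref{thm:codim-cond-implies-BL}. Your identification of \eqref{eq:codimension-condition-gp} with \eqref{eq:codimension-condition-v2} (using openness of $\sigma_j$, so $\dim\sigma_j(G)=\dim G_j$) and of \eqref{eq:codimension-condition} with \eqref{eq:codimension-condition-v3} only makes explicit bookkeeping the paper leaves implicit, and your final paragraph is not needed because the statement as given assumes $G$ connected, which already forces each $G_j=\sigma_j(G)$ to be connected.
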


We now study the constant in more detail, bringing more of the structure into play.

\begin{lem}
Suppose that  $(G, \bss, \bsp)$ and $(G\aftertilde, \bsts, \bsp)$ are Brascamp--Lieb data of compact connected Lie groups, that $\pi\colon  G\aftertilde \to G$ and $\pi_j\colon  G_j\aftertilde  \to G_j$ are surjective homomorphisms with finite kernels, and that $\sigma_j \circ \pi = \pi_j \circ \tilde{\sigma}_j$.
Then $\BL(G, \bss, \bsp)$ is finite if and only if $\BL(G\aftertilde, \bsts, \bsp)$ is finite.
\end{lem}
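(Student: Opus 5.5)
The cleanest plan is to route the whole statement through the codimension conditions. By Theorems \ref{thm:BL-implies-codim-cond} and \ref{thm:codim-cond-implies-BL}, for a canonical datum of compact Lie groups, finiteness of the Brascamp--Lieb constant is sandwiched between condition \eqref{eq:codimension-condition-v3} (necessary) and condition \eqref{eq:codimension-condition-v2} (sufficient). Condition \eqref{eq:codimension-condition-v3} is purely Lie-algebraic. Since $\pi$ and the $\pi_j$ have finite kernels, they are local isomorphisms: $\mathrm{d}\pi\colon \tilde{\Lie{g}}\to\Lie{g}$ and $\mathrm{d}\pi_j$ are Lie algebra isomorphisms. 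The relation $\sigma_j\circ\pi=\pi_j\circ\tilde\sigma_j$ differentiates to $\mathrm{d}\sigma_j\circ\mathrm{d}\pi=\mathrm{d}\pi_j\circ\mathrm{d}\tilde\sigma_j$. So the data $(\Lie{g},\bsDs)$ and $(\tilde{\Lie{g}},\mathrm{d}\bsts)$ are isomorphic, and ideals correspond with all relevant dimensions preserved. Hence condition \eqref{eq:codimension-condition-v3} holds for one datum if and only if it holds for the other.

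This leaves the gap between \eqref{eq:codimension-condition-v3} and \eqref{eq:codimension-condition-v2}. I would avoid it by proving the implication between the two constants directly, as follows.

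\emph{From $G\aftertilde$ to $G$.} Assume $\BL(G\aftertilde,\bsts,\bsp)<\infty$. The kernel $K=\ker\pi$ is a finite normal subgroup, and $\tilde\sigma_j(K)\subseteq\ker\pi_j$. The images $\tilde\sigma_j(K)$ may fail to be normal, so I would not apply Lemma \ref{lem:BL-cpct-qtnt} with $N=K$. Instead I would compare the forms directly. Given $f_j\in C_c(G_j)$, the function $f_j\circ\pi_j$ on $G_j\aftertilde$ has $L^{p_j}$ norm equal to $\|f_j\|_{L^{p_j}(G_j)}$ when all Haar measures are probability measures, since $\pi_j$ pushes Haar measure to Haar measure. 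Likewise
\[
\int_G \prod_j f_j(\sigma_j(x))\wrt x=\int_{G\aftertilde}\prod_j f_j(\sigma_j(\pi(\tilde x)))\wrt\tilde x=\int_{G\aftertilde}\prod_j (f_j\circ\pi_j)(\tilde\sigma_j(\tilde x))\wrt\tilde x .
\]
This gives $\BL(G,\bss,\bsp)\le\BL(G\aftertilde,\bsts,\bsp)$ with no hypotheses beyond the intertwining relation.

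\emph{From $G$ to $G\aftertilde$.} This direction is the obstacle, because a general $g_j$ on $G_j\aftertilde$ need not factor through $\pi_j$. I would argue via the codimension conditions instead. Suppose $\BL(G,\bss,\bsp)<\infty$. Then \eqref{eq:codimension-condition-v3} holds for $G$, hence for $G\aftertilde$ by the algebra isomorphism above. Consequently \eqref{eq:codimension-condition-v2} holds for $G\aftertilde$, since every closed connected normal subgroup has an ideal as its Lie algebra. Theorem \ref{thm:codim-cond-implies-BL} then gives finiteness for $G\aftertilde$.

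One technical point needs checking: Theorems \ref{thm:BL-implies-codim-cond} and \ref{thm:codim-cond-implies-BL} are stated for canonical data. I would first pass to canonical data using the lemma on canonical reduction, quotienting by the finite (hence compact) kernel $\ker\bss$ and using Lemma \ref{lem:BL-cpct-qtnt}. This does not change finiteness and does not change the Lie algebra maps up to isomorphism, so the argument above goes through.
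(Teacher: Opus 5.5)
Your proof is correct in substance, but it takes a different route from the paper. The paper handles both directions at once by localising. For small product neighbourhoods $\bsU$, the maps $\pi$ and $\pi_j$ are local isomorphisms, so $\BL(\tilde G,\bsts,\bsp;\bsU)$ and $\BL(G,\bss,\bsp;\bspi\bsU)$ are finite together. Since finitely many translates of each $U_j$ cover the compact group, local finiteness is equivalent to global finiteness. You instead prove the quantitative bound $\BL(G,\bss,\bsp)\le\BL(\tilde G,\bsts,\bsp)$, with probability Haar measures, by pulling back along the $\pi_j$; this direction is cleaner than the paper's. You then obtain the converse from Theorems \ref{thm:BL-implies-codim-cond} and \ref{thm:codim-cond-implies-BL}, using that \eqref{eq:codimension-condition-v3} is carried over by the isomorphisms $\mathrm{d}\pi$ and $\mathrm{d}\pi_j$. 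This is not circular, since neither theorem uses the lemma. However, it makes an elementary statement depend on the heaviest results of the section, which rest on induction and interpolation.

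The direction you call the obstacle actually has a direct fix. For $g_j\ge 0$ on $\tilde G_j$, set $f_j(\pi_j(y))=\sum_{k\in\ker\pi_j}g_j(yk)$, which is well defined. Then $g_j\le f_j\circ\pi_j$, and by Minkowski's inequality and right invariance of Haar measure, $\|f_j\|_{L^{p_j}(G_j)}=\|f_j\circ\pi_j\|_{L^{p_j}(\tilde G_j)}\le|\ker\pi_j|\,\|g_j\|_{L^{p_j}(\tilde G_j)}$. Your pullback identity then gives $\BL(\tilde G,\bsts,\bsp)\le\prod_j|\ker\pi_j|\,\BL(G,\bss,\bsp)$, with no canonical reduction and no codimension conditions.

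If you keep your route, fix three points:
\begin{itemize}
\item $\ker\bss$ need not be finite; it is compact simply because $G$ is.
\item Passing to canonical data replaces $\Lie{g}$ by $\Lie{g}/\ker\mathrm{d}\bss$, so the Lie algebra does change. What you need is $\ker\mathrm{d}\bsts=(\mathrm{d}\pi)^{-1}(\ker\mathrm{d}\bss)$, so that the two canonical quotients have isomorphic Lie algebra data.
\item You must check that each $\tilde\sigma_j$ is surjective when $\sigma_j$ is, as $\sigma_j$ must be for $p_j<\infty$ once $\BL(G,\bss,\bsp)<\infty$. This follows by comparing dimensions and using the connectedness of $\tilde G_j$.
\end{itemize}
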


\begin{proof}
Take small neighbourhoods $U_j$ of the identity in $G_j\aftertilde$.
If the $U_j$ are small enough, then the covering maps $\pi_j$ are local isomorphisms when restricted to $U_j$ and $\pi$ is a local isomorphism when restricted to a suitable subset $U$ of $G\aftertilde$ such that $\tilde\sigma_j(U) \subseteq U_j$.
It then follows that $\BL(G\aftertilde, \bsts, \bsp: \bsU)$ is finite if and only if $\BL(G, \bss, \bsp: \bspi\bsU)$ is finite, 
Since each group is compact, finitely many translates of $U_j$ cover $G_j\aftertilde$, and it follows that $\BL(G\aftertilde, \bsts, \bsp)$ is finite if and only if $\BL(G\aftertilde, \bsts, \bsp: \bsU)$ is finite.
Likewise, $\BL(G, \bss, \bsp)$ is finite if and only if $\BL(G, \bss, \bsp: \bsU)$ is finite.
\end{proof}

Let $ G\aftertilde$ be a finite covering group of a compact connected Lie group $G$.
Then there is a projection $\pi$ from $G\aftertilde$ to $G$, and every homomorphism $\sigma_j\colon G \to G_j$ gives rise to a homomorphism $\tilde\sigma_j\colon   G\aftertilde \to G_j$ given by $\tilde\sigma_j = \sigma_j \circ \pi$.

\begin{cor}\label{BL-covering}
With the covering group notation above, and the Haar measures of $G$ and $G\aftertilde$ both normalised to be $1$. 
Suppose $(G, \bss, \bsp)$ is a Brascamp--Lieb data of compact connected Lie groups, then
\[
\BL(G,\bss,\bsp) = \BL(G\aftertilde, \bsts, \bsp).
\]
\end{cor}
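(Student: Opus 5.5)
Here $\tilde\sigma_j = \sigma_j\circ\pi$ keeps the same codomain $G_j$, so the Brascamp--Lieb forms on $G$ and on $G\aftertilde$ can be compared directly; no covering of the $G_j$ is needed. The plan is to show that the two sides of the inequality are literally the same for every input, so the constants agree. The key observation is that when both Haar measures are normalised to have total mass $1$, the push-forward of the Haar measure of $G\aftertilde$ under $\pi$ is the Haar measure of $G$.

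First I check the push-forward claim. The push-forward $\pi_*(\mathrm{d}\tilde x)$ is a Borel probability measure on $G$. It is left invariant because $\pi$ is a surjective homomorphism: for $g = \pi(\tilde g)$ we have $\pi^{-1}(gE) = \tilde g\,\pi^{-1}(E)$, since $\ker\pi$ is normal. By uniqueness of normalised Haar measure, $\pi_*(\mathrm{d}\tilde x) = \mathrm{d}x$. Alternatively, one can apply the quotient integral formula (Theorem \ref{thm:quotient-integral-formula}) with the finite normal subgroup $N = \ker\pi$, which carries normalised counting measure. This is the standard compact quotient normalisation, and it makes $G\aftertilde/N \cong G$ carry mass $1$.

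Second, for $f_j \in C_c(G_j)$, the function $\prod_j f_j\circ\tilde\sigma_j = \bigl(\prod_j f_j\circ\sigma_j\bigr)\circ\pi$, so
\[
\int_{G\aftertilde} \prod_j f_j(\tilde\sigma_j(\tilde x)) \wrt{\tilde x} = \int_{G} \prod_j f_j(\sigma_j(x)) \wrt x .
\]
The right-hand sides $\prod_j \norm{f_j}_{L^{p_j}(G_j)}$ are identical for both data, because the target groups and their Haar measures are unchanged. Taking suprema over inputs then gives $\BL(G,\bss,\bsp) = \BL(G\aftertilde,\bsts,\bsp)$, including the case where both are infinite.

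There is no real obstacle; the only care needed is with the normalisation of Haar measures. Since $\ker\pi$ is both compact and open in itself, the remark after Theorem \ref{thm:quotient-integral-formula} warns that the two natural normalisations may differ. The statement resolves this by fixing both total masses to be $1$, and that is exactly the choice that makes $\pi_*$ measure-preserving. The preceding lemma is not needed for this equality, though it is consistent with it.
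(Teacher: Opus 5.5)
Your proof is correct, but it follows a different route from the paper's.

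\textbf{The paper's route.} The paper argues indirectly. The preceding lemma (a local-isomorphism argument near the identity, followed by covering each compact group by finitely many translates) shows that $\BL(G,\bss,\bsp)$ and $\BL(G\aftertilde,\bsts,\bsp)$ are finite together. Theorem C, which says that a finite constant on connected groups equals $1$ once all Haar measures are normalised, then forces both constants to be $1$ or both to be $\infty$.

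\textbf{Your route.} You show that the two multilinear forms agree input by input. The map $\pi$ pushes the normalised Haar measure of $G\aftertilde$ forward to that of $G$, and the right-hand sides are literally the same.

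\textbf{What your route buys.} It is self-contained and more general.
\begin{itemize}
\item It needs only that $\pi$ is a continuous surjective homomorphism of compact groups.
\item It works for arbitrary Haar measures on the $G_j$. The paper's ``$1$ or $\infty$'' step needs those normalised as well, which the statement does not stipulate.
\item It does not depend on Theorem C, whose proof is an induction formulated for canonical data. The datum $(G\aftertilde,\bsts,\bsp)$ is not canonical, because $\ker\bsts\supseteq\ker\pi$.
\end{itemize}
In fact your computation is exactly the equality case of Lemma \ref{lem:BL-cpct-qtnt} with $N=\ker\pi\subseteq\ker\tilde\sigma_j$, so the corollary could have been read off from that lemma directly.

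\textbf{Two harmless slips.} First, the identity $\pi^{-1}(gE)=\tilde g\,\pi^{-1}(E)$ uses only that $\pi$ is a homomorphism, not normality of the kernel. Second, $\ker\pi$ is finite but not open in $G\aftertilde$ when $\dim G>0$. So the compact-versus-open normalisation conflict flagged after Theorem \ref{thm:quotient-integral-formula} does not arise here: normalised counting measure on $\ker\pi$ is just the standard compact normalisation.
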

\begin{proof}
The previous result shows that one constant is finite if and only if the other constant is finite.
Once all Haar measures are normalised, the constants are either $1$ or $\infty$.
\end{proof}

We recall that every compact connected Lie group is a product $G' Z(G)_e$ of its semisimple commutator subgroup $G'$, which is semisimple, and of the connected component $Z(G)_e$ of the centre of $G$, which is a torus, and $G' \cap Z(G)_e$ is finite.
If $\sigma_j\colon  G \to G_j$ is a surjective open map, then $\sigma_j(G') = (\sigma_j(G))'$ and $\sigma_j(Z(G)_e = Z(\sigma_j(G))_e$, by Lemma \ref{lem:homo-cpct-Lie-gps}.
We write $\phi_j$  for the restriction of $\sigma_j$ to $G'$ with codomain $(\sigma_j(G))'$, and
$\psi_j$ for the restriction of $\sigma_j$ to $Z(G)_e$ with codomain $Z(G_j)_e$.

\begin{thmD}
Suppose that $(G, \bss,\bsp)$ is a canonical Brascamp--Lieb datum of compact Lie groups.
Then $\BL(G, \bss,\bsp)$ is finite if and only if $\BL(G', \bsphi,\bsp)$  and $\BL(Z(G)_e, \bspsi,\bsp)$  are finite.
\end{thmD}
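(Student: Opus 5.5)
By Theorem B we may assume that $G$ and all $G_j$ are connected; the finiteness questions for $G$, $G'$ and $Z(G)_e$ are unaffected. The plan is to pass to the finite cover $G\aftertilde \coloneqq G' \times Z(G)_e$ of $G$ given by Theorem \ref{cptconnected}. Let $\pi\colon G' \times Z(G)_e \to G$ be multiplication, with finite kernel $F$, and set $\tilde\sigma_j(x,z) \coloneqq (\phi_j(x), \psi_j(z))$, a homomorphism into $(\sigma_j(G))' \times Z(G_j)_e$. Let $\pi_j$ be multiplication from this product onto $G_j = \sigma_j(G)$; by Lemma \ref{lem:homo-cpct-Lie-gps} and Theorem \ref{cptconnected} applied to $G_j$, $\pi_j$ is a surjective homomorphism with finite kernel. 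Since $\sigma_j$ is a homomorphism, $\sigma_j(xz) = \sigma_j(x)\sigma_j(z)$, so $\sigma_j \circ \pi = \pi_j \circ \tilde\sigma_j$. The unnamed lemma preceding Corollary \ref{BL-covering} then shows that $\BL(G,\bss,\bsp)$ is finite if and only if $\BL(G' \times Z(G)_e, \bsts, \bsp)$ is finite.

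Next, $\bsts = \bsphi \otimes \bspsi$ is a product datum. By Corollary \ref{cor:breaking},
\[
\BL(G'\times Z(G)_e, \bsphi\otimes\bspsi, \bsp) = \BL(G',\bsphi,\bsp)\,\BL(Z(G)_e,\bspsi,\bsp),
\]
with product Haar measures. Each factor is at least $1$ when the measures are normalised to total mass $1$, as testing on constants shows. Hence the product is finite exactly when both factors are finite, which gives the theorem.

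The covering lemma is stated for general data, not canonical ones, but it only uses local isomorphisms and compactness, so this causes no difficulty. One detail needs care: $\bsts$ need not be injective, since $\ker\tilde\sigma = \pi^{-1}(\ker\bss) = F$ is finite. The lemma does not require injectivity, and finite kernels are harmless by Lemma \ref{lem:BL-cpct-qtnt}.

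The main point to verify is that the lemma's hypotheses hold. These are that $\pi_j$ has finite kernel, and that $\phi_j$ and $\psi_j$ are surjective onto $(\sigma_j(G))'$ and $Z(G_j)_e$. I would argue the surjectivity at the Lie algebra level. A surjective $\mathrm{d}\sigma_j$ maps $[\Lie{g},\Lie{g}]$ onto $[\Lie{g}_j,\Lie{g}_j]$. It maps $\Lie{z}(\Lie{g})$ into $\Lie{z}(\Lie{g}_j)$, and the image of $\Lie{z}(\Lie{g})$ must fill $\Lie{z}(\Lie{g}_j)$ because $\Lie{g}_j = [\Lie{g}_j,\Lie{g}_j] \oplus \Lie{z}(\Lie{g}_j)$. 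Connectedness then gives the surjectivity at the group level.

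An alternative that avoids the cover is to check condition \eqref{eq:codimension-condition-v3} directly. Ideals of $\Lie{g}$ split as $\Lie{s} \oplus \Lie{z}'$, where $\Lie{s}$ is an ideal of $[\Lie{g},\Lie{g}]$ and $\Lie{z}' \subseteq \Lie{z}(\Lie{g})$, and $\mathrm{d}\sigma_j$ respects this splitting. So the codimension inequality for $\Lie{s} \oplus \Lie{z}'$ is the sum of the inequality for $\Lie{s}$ in the semisimple part and the inequality for $\Lie{z}'$ in the central part. Taking $\Lie{z}' = \Lie{z}(\Lie{g})$, or $\Lie{s} = [\Lie{g},\Lie{g}]$, isolates each summand. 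This splitting uses the Lie algebra conditions, and Theorem C only gives \eqref{eq:codimension-condition-gp} as a sufficient condition rather than \eqref{eq:codimension-condition-v3}. I would therefore keep the covering argument as the main route.
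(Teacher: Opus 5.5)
Your proposal is correct and is essentially the paper's proof. You pass to the finite cover $G'\times Z(G)_e$ of Theorem \ref{cptconnected}, identify the lifted datum with $\bsphi\otimes\bspsi$, and conclude with Corollary \ref{cor:breaking} together with testing against constants in one factor (your remark that each normalised factor is at least $1$). Your use of the covering lemma with the covers $\pi_j\colon (G_j)'\times Z(G_j)_e\to G_j$ actually justifies the identification of $\sigma_j\circ\pi$ with $\phi_j\otimes\psi_j$, which the paper, citing Corollary \ref{BL-covering}, leaves implicit.

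Two side remarks are slightly off, though harmless. First, your $\bsts$ is in fact injective: if $\phi_j(x)=\psi_j(z)=e$ for all $j$, then $\bss(x)=\bss(z)=e$, so $x=z=e$; it is $\bss\circ\pi$ whose kernel is $F$. Second, your alternative route would also work, because \eqref{eq:codimension-condition-v3} trivially implies \eqref{eq:codimension-condition-v2}; hence Theorems \ref{thm:BL-implies-codim-cond} and \ref{thm:codim-cond-implies-BL} together already make the Lie algebra condition both necessary and sufficient.
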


\begin{proof}
By Theorem \ref{cptconnected}, $G$ has a finite covering group $G\aftertilde$ of the form $G' \times Z(G)_e$, and we may identify $G$ with ${G\aftertilde }/ N$, where $N$ is a finite normal subgroup of $G\aftertilde$.  
We lift the homomorphisms $\sigma_j\colon  G \to G_j$ to homomorphisms $\tilde\sigma_j\colon  G\aftertilde \to G_j$ by composing with the quotient map from $G\aftertilde$ to $G$.
We normalise the Haar measures of $G\aftertilde$ and $G$ to be $1$ and apply Corollary \ref{BL-covering}. 
Then
\[
\BL(G\aftertilde, \bsts, \bsp) =  \BL(G, \bss, \bsp) .
\]

We recall that the homomorphism $\tilde\sigma_j$ maps $G'$ into $(G_j)'$ and $Z(G)_e$ into $Z(G_j)_e$; hence $\tilde\sigma_j$ may be identified with $\phi_j \otimes \psi_j$.
On the one hand, if $\BL(G', \bsphi,\bsp)$  and $\BL(Z(G)_e, \bspsi,\bsp)$ are finite, then $\BL(G\aftertilde, \bsts, \bsp)$ is finite by Corollary \ref{cor:breaking}.

On the other hand, if $\BL(G\aftertilde, \bsts, \bsp)$ is finite, then testing on inputs of the form $f_j \otimes 1$ on $G' \times Z(G)_e$ shows that $\BL(G', \bsphi,\bsp)$ is finite, while testing on inputs of the form $1 \otimes f_j $ on $G' \times Z(G)_e$ shows that $\BL(G', \bspsi,\bsp)$ is finite.
\end{proof}

Now, by considering the extremal functions for the inequality, we compute the constant.

\begin{thmE}
Suppose that $(G, \bss,\bsp)$ is a canonical Brascamp--Lieb datum of compact Lie groups.
If $BL(G, \bss, \bsp)$ is finite, then
\begin{equation}\label{eq:BL-norm-2}
\BL(G, \bss, \bsp) 
= \max_{H \in \mathcal{O}(G)} \frac{ |H|_G}  {| \sigma_j(H) |_{G_j}^{1/p_j}} \,.
\end{equation}
\end{thmE}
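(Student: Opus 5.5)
The plan is to reduce to a finite-group problem by collapsing connected components, and then to invoke the known result for finite groups. Throughout, the right-hand side of \eqref{eq:BL-norm-2} is read as $\max_H |H|_G / \prod_j |\sigma_j(H)|_{G_j}^{1/p_j}$.

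\textbf{Lower bound.} Fix an open subgroup $H$ of $G$. Test \eqref{eq:usual} with $f_j = \mathbf{1}_{\sigma_j(H)}$; each $\sigma_j(H)$ is open by Lemma \ref{lem:good-or-bad}, so $f_j$ is continuous with compact support. The left-hand side is then at least $|H|_G$, because $\prod_j f_j\circ\sigma_j \geq \mathbf{1}_H$. The right-hand side is $\prod_j |\sigma_j(H)|_{G_j}^{1/p_j}$. Since $G$ is compact, $\mathcal{O}(G)$ is finite: every open subgroup contains $G_e$, and $G/G_e$ is finite. So the supremum over $H$ is a maximum, and it is a lower bound for $\BL(G,\bss,\bsp)$.

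\textbf{Upper bound.} Set $N=G_e$. Because $\sigma_j$ is open and surjective, $\sigma_j(N)=(G_j)_e$, and these are compact, normal, closed and open.
\begin{itemize}
\item Apply Theorem \ref{thm:breaking} with this $N$:
\[
\BL(G,\bss,\bsp)\leq \BL(N,\bss|_N,\bsp)\,\BL(G/N,\bsds,\bsp).
\]
\item Normalise Haar measure on $N$ and on each $(G_j)_e$ to total mass $1$. By Theorem B, finiteness passes to $N$, and then Theorem C gives $\BL(N,\bss|_N,\bsp)=1$.
\item The quotient integral formula then forces the measures on the finite groups $G/N$ and $G_j/(G_j)_e$ to be counting measure times $|N|_G$ and $|(G_j)_e|_{G_j}$ respectively. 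Rescaling to counting measure multiplies the Brascamp--Lieb constant by the explicit factor $|N|_G\prod_j |(G_j)_e|_{G_j}^{-1/p_j}$.
\item The remaining quotient datum $(G/N,\bsds,\bsp)$ is a Brascamp--Lieb datum of finite groups with counting measures. Here I would cite the finite-group theorem recorded in \cite{BJ22}, following Christ \cite{Chr13} and \cite{CDKSY24}: the constant equals $\max_K |K|/\prod_j|\dot\sigma_j(K)|^{1/p_j}$ over subgroups $K$ of $G/N$. The datum need not be canonical, but \cite{BJ22} allows general homomorphisms.
\item Subgroups $K$ of $G/N$ correspond to open subgroups $H=\pi^{-1}(K)$ of $G$. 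Then $|H|_G=|K|\,|N|_G$ and $|\sigma_j(H)|_{G_j}=|\dot\sigma_j(K)|\,|(G_j)_e|_{G_j}$. Substituting, the scaling factor recombines exactly into $|H|_G/\prod_j|\sigma_j(H)|^{1/p_j}$.
\end{itemize}
This gives the matching upper bound.

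\textbf{Main obstacle.} The main difficulty is the finite-group formula itself, if one wants it self-contained rather than quoted. A proof would go through Christ's argument: a multilinear-to-linear reduction plus the entropy/Shearer approach of \cite{CDKSY24}, showing that the extremisers for finite groups are indicators of cosets of subgroups. I would quote it. A secondary but routine difficulty is checking the compatibility of normalisations in Theorem \ref{thm:breaking} when $N$ is both compact and open, as the paper warns.
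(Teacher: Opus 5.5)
Your proposal is correct and follows essentially the paper's route. The upper bound comes from splitting off $G_e$ (the paper's inequality \eqref{eq:BL-cpct-gp}, which is really an instance of Theorem~\ref{thm:breaking}, the result you cite), with the connected factor contributing $1$ by Theorem C and the finite quotient handled by the Christ/Bennett--Jeong formula; the lower bound comes from testing on indicators of the open subgroups $\sigma_j(H)$, where your direct bound $\prod_j f_j\circ\sigma_j \geq \mathbf{1}_H$ and explicit normalisation bookkeeping are cleaner versions of the paper's $H^*$ argument and its all-measures-equal-to-one convention.
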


\begin{proof}
The left and right hand sides of \eqref{eq:BL-norm-2} depend on the Haar measures of $G$ and all $G_j$ in the same way, so without loss of generality, we may and shall assume that the Haar measures of $G$, $G_c$ and $G/G_c$ and all $G_j$, $(G_j)_c$ and $(G_j)_c$ are normalised to be $1$.
Now from \eqref{eq:BL-cpct-gp} and the estimate of Christ \cite{Chr13}, extended to finite nonabelian groups by Bennett and Jeong \cite{BJ22},
\[
\BL(G, \bss, \bsp) 
\leq \max_{H \in \mathcal{O}(G)} \frac{ |H|_G}  { \prod_j | \sigma_j(H) |_{G_j}^{1/p_j}} \,.
\]

Let $f_j$ be the characteristic functions of  nontrivial open subgroups $H_j$ of $G_j$; then 
\[
H^* \coloneqq  \bigcap_j \sigma_j^{-1}(H_j) 
\]
is an open subgroup of $G$, $\int_G \prod_j (f_j\circ\sigma_j)(x) \wrt x  = \labs  H^* \rabs_{G}$,
and $\prod_j \| f_j \|_{L^{p_j}(G_j)}  = \prod_j |H_j |^{1/p_j} $.
Thus
\[
 \frac{ \labs  H^* \rabs_{G} }{\prod_j |H_j |^{1/p_j} }  \leq \BL(G, \bss ,\bsp).
\]
We take the $H_j$ to be $\sigma_j(H)$ where $H$ is an open subgroup of $G$ that maximises the right hand side of \eqref{eq:BL-norm-2}. 
By set theory, $H \subseteq H^*$, so $|H|_G \leq |H^*|_G$. 
Hence
\[
 \frac{ \labs  H^* \rabs_{G} }{\prod_j |H_j |^{1/p_j} } 
\leq \BL(G, \bss ,\bsp)
\leq  \frac{ \labs  H \rabs_{G} }{\prod_j |H_j |^{1/p_j} }  
\leq  \frac{ \labs  H^* \rabs_{G} }{\prod_j |H_j |^{1/p_j} } \,.
\] 
It now follows that $H = H^*$, the functions $f_j$ are extremisers for the Brascamp--Lieb inequality, and \eqref{eq:BL-norm-2} holds.
\end{proof}

The proof of this last theorem shows that, to find the right hand side of \eqref{eq:BL-norm-2}, it suffices to consider subgroups $H$ of $G$ that satisfy the condition that $H = \bigcap_j \sigma_j^{-1}(\sigma_j(H))$.

\end{document}